\theoremstyle{plain}
\newtheorem{theorem}{Theorem}[section]
\newtheorem{proposition}[theorem]{Proposition}
\newtheorem{lemma}[theorem]{Lemma}
\newtheorem{corollary}[theorem]{Corollary}
\numberwithin{equation}{section}
\theoremstyle{definition}
\newtheorem{definition}[theorem]{Definition}
\newtheorem{remark}[theorem]{Remark}
\newtheorem{example}[theorem]{Example}
\newtheorem{conjecture}[theorem]{Conjecture}
\newtheorem{problem}[theorem]{Problem}
\newcommand{\C}{\mathbb{C}}
\newcommand{\bS}{\mathbb{S}}
\newcommand{\cP}{\mathcal{P}}
\newcommand{\cQ}{\mathcal{Q}}
\newcommand{\cC}{\mathcal{C}}
\newcommand{\cI}{\mathcal{I}}
\newcommand{\cF}{\mathcal{F}}
\newcommand{\cR}{\mathcal{R}}
\newcommand{\cG}{\mathcal{G}}
\newcommand{\fC}{\mathsf{C}}
\newcommand{\B}{\mathcal{B}}
\newcommand{\V}{\mathcal{V}}
\newcommand{\alt}{\mathsf{a}}
\newcommand{\bE}{\mathbf{E}}
\newcommand{\Q}{\mathbb{Q}}
\newcommand{\R}{\mathbb{R}}
\newcommand{\Z}{\mathbb{Z}}
\newcommand{\ie}{\textit{i}.\textit{e}. }
\newcommand{\Hoch}{\mathds{H}\mathrm{och}}
\newcommand{\Alt}{\mathrm{Alt}}
\DeclareMathOperator{\Lk}{Lk}
\DeclareMathOperator{\odd}{odd}
\DeclareMathOperator{\even}{even}
\DeclareMathOperator{\conv}{conv}
\DeclareMathOperator{\row}{row}
\begin{document}
\title[Real toric manifolds associated with chordal nestohedra]{Real toric manifolds associated with chordal nestohedra}

\author[S. Choi and Y. Yoon]{Suyoung Choi$^1$, Younghan Yoon$^1$}

\address{$^1$Department of Mathematics, Ajou University, 206, World cup-ro, Yeongtong-gu, Suwon 16499, Republic of Korea}
\email{schoi@ajou.ac.kr, younghan300@ajou.ac.kr}

\date{\today}
\subjclass[2020]{57S12, 14M25, 57N65, 05A05, 52B22}

\keywords{homology group, toric topology, real toric varieties, chordal nestohedra, poset topology, EL-shellability, $\B$-permutations, alternating permutations, chordal graphs, graph associahedra, Stanley-Pitman polytopes, Hochschild polytopes}

\thanks{This work was supported by the National Research Foundation of Korea Grant funded by
the Korean Government (RS-2025-00521982).}

\begin{abstract}
    This paper investigates the rational Betti numbers of real toric manifolds associated with chordal nestohedra.
    We consider the poset topology of a specific poset induced from a chordal building set, and show its EL-shellability.
    Based on this, we present an explicit description using alternating $\B$-permutations for a chordal building set $\B$, transforming the computing Betti numbers into a counting problem.
    This approach allows us to compute the $a$-number of a finite simple graph through permutation counting when the graph is chordal.
    In addition, we provide detailed computations for specific cases such as real Hochschild varieties corresponding to Hochschild polytopes.
\end{abstract}
\maketitle
\tableofcontents

\section{Introduction}

A \emph{toric variety}~$X$ is a normal complex algebraic variety containing the algebraic torus~$(\C\setminus\{O\})^n$ on itself extends to an action on $X$.
A smooth compact toric variety~$X$ is called a \emph{toric manifold}, and the real locus~$X^\R$ of $X$, that is, the fixed point set of $X$ by the canonical involution induced from a complex conjugation, is called a \emph{real toric manifold}.
By the fundamental theorem of toric geometry, there is a one-to-one correspondence between the class of $n$-dimensional toric manifolds and the class of non-singular complete fans in $\R^n$.
It is well known \cite{Cox_toric_book} that a toric variety is projective if and only if its associated fan is the normal fan of some simple polytope.
An $n$-dimensional simple polytope is said to be \emph{Delzant} if for each vertex $v$ the outward normal vectors of the facets containing $v$ can be chosen so that they form an integral basis for $\Z^n$.
Since the normal fan of a Delzant polytope~$P$ is a non-singular complete fan, $P$ induces the projective toric manifold and the corresponding real toric manifold.

Our investigation centers on some family of Delzant polytopes known as \emph{nestohedra}, which has been well studied in various literature, such as \cite{Postnikov2009}, \cite{Postnikov2008}, \cite{Choi-Park2015}, \cite{Pilaud2017},  \cite{Vladimir2017}, \cite{Ivan2017}, \cite{Vladimir-Tanja2017}, \cite{Dotsenko-Shadrin-Vallette2019}, and~\cite{Choi-Hwang2023}.
For a finite set~$S$ of natural numbers, a \emph{building set} on $S$ is a collection of nonempty subsets of~$S$ which includes every singleton of~$S$ and satisfies the following condition:
$$
  I,J \in \B \text{ and } I \cap J \neq \emptyset \Rightarrow I \cup J \in \B.
$$
The notion of building sets is defined in~\cite{DeConcini-Procesi1995} with respect to a hyperplane arrangement of vector spaces.
An inclusion-maximal subset in $\B$ is called a \emph{connected component} of $\B$.
If there is a unique connected component of $\B$, a building set $\B$ is said to be \emph{connected}.

A \emph{nestohedron} $P_{\B}$ associated with $\B$ is defined as the Minkowski sum of simplices
$$
    P_\B = \sum_{I \in \B}{\conv(I)},
$$
where $\conv (I)$ is the convex hull of the $i$th coordinate vectors for all $i \in I$.

Since $P_\B$ is Delzant as shown in \cite{Postnikov2009}, it induces a projective toric manifold.
We refer to this toric manifold as the \emph{toric manifold associated with a building set~$\B$} and denoted it by~$X_{\B}$, and its corresponding real toric manifold by~$X^\R_{\B}$.
For a connected building set~$\B$ on $S$ of cardinality~$n+1$, the associated toric manifold~$X_\B$ is of dimension~$n$.
By \cite[Remark~6.7]{Postnikov2008}, if $\B_1,\B_2,\ldots,\B_\ell$ are the connected components of a building set $\B$, then $P_\B$ is isomorphic to the direct product
$$
P_{\B_1} \times \cdots \times P_{\B_\ell}.
$$
Moreover, by \cite[Theorem~2.4.7]{Cox_toric_book}, the associated toric manifold $X_{\B}$ is isomorphic to the product~$X_{\B_1} \times \cdots \times X_{\B_\ell}$ as projective toric manifolds.
Hence, to study toric manifolds or real toric manifolds associated with building sets, it is sufficient to consider the case when the building set is connected.

As established in \cite{Davis-Januszkiewicz1991}, for a building set $\B$, the rational Betti numbers of $X_\B$ and the $\Z_2$-Betti numbers of $X^\R_\B$ are completely determined from the numbers of faces of $P_\B$, where $\Z_2$ is the field of order~$2$.
Particularly, in the case where $\B$ is a \emph{chordal building set} on $S$, that is,
$$
I = \{i_1 < \cdots <i_r\} \in \B \Rightarrow \{i_s,\ldots,i_r\} \in \B \text{ for each $1 < s < r$},
$$
there is an explicit description, established in  \cite{Postnikov2008}, of the face numbers of $P_\B$ in terms of permutations on $S$.
Note that this description confirms that the Gal's conjecture \cite{Gal2005} is true for chordal nestohedra.
To discuss more details, we briefly introduce some notions for a building set $\B$ on $S$: 
\begin{itemize}
  \item For any subset $I$ of $S$, $\B \vert_{I} = \{J \in \B \colon J \subset I\}$ is a \emph{restricted building set} of $\B$ to $I$.
  \item A (one-line notation) permutation $x = (x_1x_2\cdots x_{k})$ on $S$ is a \emph{$\B$-permutation} if, for each $1 \leq i \leq k$, there is $J \in \B \vert_{\{x_1,x_2,\ldots,x_{i}\}}$ such that $J$ contains both
      $x_i$ and $\max{\{x_1,x_2,\ldots,x_i\}}$.
\end{itemize}

From now on in this paper, for a topological space~$M$, $\beta_k(M)$ will simply be referred to as the $k$th rational Betti number of $M$.
From \cite{Davis-Januszkiewicz1991} and \cite{Postnikov2008}, for a connected chordal building set~$\B$ on $[n+1] = \{1,2,\ldots,n+1\}$, the $k$th Betti numbers of $X_\B$ is
\begin{equation}\label{B-perm}
  \beta_k(X_\B) = \begin{cases}
                     \#\text{$\B$-permutations with $\frac{k}{2}$ descents}, & \mbox{if $0 \leq k \leq 2n$ is even,} \\
                     0, & \mbox{otherwise},
                   \end{cases} 
\end{equation}
where a \emph{descent} of a permutation $(x_1x_2\cdots x_n)$ on $[n+1] =\{1,\ldots, n+1\}$ is $1 \leq i < n$ where $x_i > x_{i+1}$.

It is also natural to consider Betti number formulas for real toric manifold~$X^\R_\B$ associated with~$\B$.
Although the $\Z_2$-Betti numbers of $X_\B^\R$ can also be computed in the same manner as in~\eqref{B-perm}, 
the computation of its rational Betti numbers is significantly more complicated in general.

One typical and important example of a chordal building set is the maximal building set~$\B$ on~$[n+1]$, that is, $\B$ is the set of nonempty subsets of~$[n+1]$.
The corresponding polytope is known as a \emph{permutohedron} and has dimension $n$.
Likewise, the associated toric variety, having a complex dimension of $n$, is referred to as the \emph{Coxeter variety} of type $A_n$, denoted by $X_{A_n}$.
In many literature, it is also known as a \emph{permutohedral variety}.
Furthermore, its real counterpart~$X_\B^\R$ of dimension $n$ is called a \emph{real Coxeter variety} $X_{A_n}^\R$ of type~$A_n$ or a \emph{real permutohedral variety}.
In his paper \cite{Henderson2012}, Henderson employed a geometric approach to compute the $k$th Betti numbers of $X_{A_n}^\R$, which can be expressed as follows:
$$
  \beta_k(X^{\R}_{A_n}) = \binom{n+1}{2k} a_{2k},
$$
where $a_{2k}$ denotes the $2k$th Euler zigzag number (A000111 in \cite{oeis}), signifying the number of alternating permutations of length $2k$.
Later, this formula is revisited by several studies including \cite{Suciu2012}, \cite{Choi-Park2015}, \cite{Cho-Choi-Kaji2019}, which explored poset topologies based on the formula established in \cite{ST2012} and \cite{Choi-Park2017_torsion}.
Additionally, the multiplicative structure of the cohomology ring of $X_{A_n}^\R$ has been completely described in terms of alternating permutations in our previous work \cite{Choi-Yoon2023}.

One observes that a $\B$-permutation for the
 maximal building set $\B$ on $[n+1]$ is an ordinary permutation.
In other words, in this case, the Betti number of $X_\B^\R = X_{A_n}^\R$ can be described using alternating ($\B$-)permutations, whereas that of $X_\B = X_{A_n}$ is describable using permutations as in \eqref{B-perm}.
This observation motivates us to investigate whether  the Betti numbers of $X_\B^\R$ for a general chordal building set $\B$ can similarly be described by alternating $\B$-permutations, where an \emph{alternating $\B$-permutation} is a $\B$-permutation $x=(x_1x_2\cdots x_n)$ such that $x_1 > x_2 < x_3 > \cdots$.

Now, we introduce the main result of this paper, and the proof will be given in Section~\ref{sec:proof_of_main_theorem}.

\begin{theorem}\label{main1}
    Let $\B$ be a connected chordal building set on $[n+1] = \{1,2,\ldots,n+1\}$.
    The $k$th Betti number of $X^\R_{\B}$ is
    $$
    \beta_{k}(X^\R_{\B}) = \sum_{I \in {[n+1] \choose 2k}}{\# \text{alternating $\B\vert_I$-permutations}}.
    $$
\end{theorem}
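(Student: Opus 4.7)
The plan is to reduce the computation of $\beta_k(X_\B^\R)$ to a counting of alternating $\B\vert_I$-permutations via poset topology and EL-shellability, paralleling the treatment of the permutohedral case in \cite{Suciu2012, Choi-Park2015} but with the extra structure needed for arbitrary chordal building sets.

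First, I would invoke the general formula of Suciu--Trevisan \cite{ST2012} and Choi--Park \cite{Choi-Park2017_torsion}, which expresses the rational Betti numbers of a real toric manifold as
$$
\beta_k(X_\B^\R) = \sum_{\omega} \dim \tilde{H}_{k-1}(K_\omega; \Q),
$$
where $\omega$ ranges over suitable row-space elements of the mod-$2$ characteristic matrix of $X_\B$ and $K_\omega$ is a full subcomplex of the nested set complex of $\B$. For the standard characteristic function attached to the nestohedron $P_\B$, the indices $\omega$ are naturally parametrized by nonempty subsets $I \subseteq [n+1]$, and the corresponding subcomplex $K_I$ is combinatorially encoded by the restricted building set $\B\vert_I$. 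This identifies the summands on the right-hand side of the theorem with those in the Suciu--Trevisan formula.

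Second, for each $I$, I would model $K_I$ (up to homotopy) as the order complex $\Delta(\cP_I)$ of the specific poset built from $\B\vert_I$ that is introduced earlier in the paper, and then apply the EL-shellability of $\cP_I$ proved in the preceding section. EL-shellability implies that $\Delta(\cP_I)$ is homotopy equivalent to a wedge of top-dimensional spheres whose number equals the number of descending maximal chains in the EL-labeling. Only one reduced homology group of $\Delta(\cP_I)$ is nonzero, and the degree constraint for a nonzero contribution to $\tilde{H}_{k-1}$ forces $|I|=2k$.

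Third, I would establish a bijection between the descending maximal chains of $\cP_I$ in the EL-labeling and alternating $\B\vert_I$-permutations. The EL-labeling would be designed precisely so that the sequence of edge labels along a descending chain reads out such a permutation. Summing over $I \in \binom{[n+1]}{2k}$ then gives the claimed formula.

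The main obstacle is the third step: setting up and verifying the bijection between descending chains and alternating $\B\vert_I$-permutations in full generality. Passing from the permutohedral case, where classical alternating permutations arise, to arbitrary chordal building sets requires careful use of the chordality hypothesis, since this is what ensures that the $\B$-permutation condition behaves well under restriction to $\B\vert_I$ and is compatible with the EL-labeling. I would expect most of the technical work to consist in verifying this compatibility, whereas the outer framework and the dimension bookkeeping are relatively standard.
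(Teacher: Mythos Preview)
Your overall strategy matches the paper's: apply the Suciu--Trevisan/Choi--Park formula, pass to poset topology, use the EL-shellability of $\hat{\cP}_{\B\vert_I}$ from Section~\ref{section3}, and count spheres via alternating $\B\vert_I$-permutations. However, there is a real gap in your second step that breaks the dimension bookkeeping you call ``relatively standard.''

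You propose to model $(K_\B)_I$ directly, up to homotopy, as the order complex $\Delta(\cP_{\B\vert_I})$. That is not the correct relationship. For $|I|=2k$ the complex $\Delta(\cP_{\B\vert_I})$ is $(k-2)$-dimensional, so after shellability it is a wedge of $(k-2)$-spheres and its $\tilde{H}_{k-1}$ vanishes; under your identification it would contribute nothing to $\beta_k$. What the paper actually does is: (i) use that $K_\B$ is \emph{flag} when $\B$ is chordal to show $(K_\B)_I \simeq K_{\B\vert_I}^{\odd}$ (Lemma~\ref{remove}); (ii) identify the \emph{complementary} subcomplex $K_{\B\vert_I}^{\even}$ with $\Delta(\cP_{\B\vert_I})$ (Lemma~\ref{geo_real}); and then (iii) apply Alexander duality inside $|K_{\B\vert_I}| \cong S^{2k-2}$ to obtain $\tilde{H}_{k-1}(K_{\B\vert_I}^{\odd}) \cong \tilde{H}_{k-2}(\Delta(\cP_{\B\vert_I}))$. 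The flagness reduction and the Alexander duality step are both absent from your outline, and without them the required degree shift from $k-2$ to $k-1$ has no source. Your sketch of the third step is essentially right (modulo the paper's reversed EL convention, in which spheres correspond to chains with \emph{no} decreasing position); the paper carries it out as Lemma~\ref{Bstandard} together with an Euler-characteristic argument in Theorem~\ref{thm:a(G)}.
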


It should be noted that the condition of the chordality of a building set is necessary.
An example introduced in Section~\ref{subsection:Cyclohedra} demonstrates that Theorem~\ref{main1} does not hold for an arbitrary building set even for small dimensions~$n$.

To prove the main theorem, for a chordal building set $\B$ on $[n+1]$, we need to consider the specific poset~$\hat{\cP}_\B$ consisting of all subsets $I$ of $[n+1]$ such that $\B\vert_I$ does not admit odd components.
The key part is showing that the poset~$\hat{\cP}_\B$ is EL-shellable in Section~\ref{section3}.
EL-shellability has been extensively studied in the literature, such as in \cite{Muhle2015}, \cite{Can2019}, \cite{Li2021}, and \cite{Li2024}.
The authors wish to emphasize that this provides a non-trivial example of naturally occurring EL-shellable complexes.
Due to EL-shellability of $\hat{\cP}_\B$, in Section~\ref{sec:alt_B_perm}, we deal with the poset topology of $\hat{\cP}_\B$ in terms of alternating $\B$-permutations.
The full proof of Theorem~\ref{main1} will be given in Section~\ref{sec:proof_of_main_theorem}.

In Section~\ref{section4}, we apply the main result in several remarkable chordal nestohedra, such as \emph{permutohedra}, \emph{associahedra} (or \emph{Stasheff polytopes}), \emph{stellohedra}, \emph{Stanley-Pitman polytopes} \cite{Stanley-Pitman}, and \emph{Hochschild polytopes} \cite{Vincent-Daria2023}.
See Figure~\ref{category} for an illustrated diagram.
\begin{figure}
\centering
\begin{tikzpicture}
    \draw[rounded corners=2mm] (-6,-3) rectangle (6,3) node [below left] {Nestohedrda};
    \draw[rounded corners=2mm] (1.8,-2) rectangle  (-5.5,2.7) node [below right] {Chordal nestohedrda};
    \draw[rounded corners=2mm] (-1.5,-2.5) rectangle (5.5,1) node [below left] {Graph associahedra};
    \draw[rounded corners=2mm] (1.5,-0.2) rectangle (-5,2) node [below right] {Hochschild Polytopes};
    \draw[rounded corners=2mm] (-4.7,0) rectangle (-1.7,1.3);
    \node (1) at (-3.2,0.8) {Stanley-Pitman};
    \node (1) at (-3.2,0.4) {Polytopes};
    \draw[rounded corners=2mm] (-1.3,0.1) rectangle (1.4,0.7) node [below left] {Permutohedra};
    \draw[rounded corners=2mm] (-1.2,-1) rectangle (1.3,-0.4) node [below left] {Associahedra};
    \draw[rounded corners=2mm] (-1.1,-1.8) rectangle (1.1,-1.2) node [below left] {Stellohedra};
    \draw[rounded corners=2mm] (5,-1.5) rectangle (2.8,-0.9) node [below right] {Cyclohedra};

\end{tikzpicture}
\caption{Categories of nestohedra}\label{category}
\end{figure}
The first three are contained the class of \emph{graph associahedra} \cite{Carr-Devadoss2006}, that is, nestohedra whose building set~$\B = \B(G)$ consists of the connected induced subgraph of a finite simple graph~$G$.
For a finite simple graph~$G$, there is a graph invariant~$a(G)$, known as \emph{the $a$-number} of $G$, as introduced in \cite{Choi-Park2015}.
The Betti numbers of $X^\R_{\B(G)}$ is determined by the $a$-numbers of subgraphs of~$G$. 
See Section~\ref{graph_subsec} for more details.

A graph is said to be \emph{chordal} if every cycle containing four or more vertices has a chord, which is an edge that connects two nonadjacent vertices of the cycle.
According to \cite{Postnikov2008}, a finite simple graph $G$ is a \emph{chordal graph} with vertices labeled \emph{perfect elimination ordering} (refer Section~\ref{graph_subsec}) if and only if its building set $\B(G)$ is chordal.
For a chordal graph $G$ with a perfect elimination ordering, Theorem~\ref{main1} provides a combinatorial interpretation of the $a$-number $a(G)$ via $\B(G)$-permutations.
It is indeed beneficial because the computation of $a(G)$ was transformed into a counting problem, whereas the original definition of $a(G)$ is described recursively.
In particular, the results of Corollary~\ref{graph_corr} are equal to the $a$-numbers of \emph{complete graphs}, \emph{path graphs}, and \emph{star graphs}, which correspond permutohedra, associahedra, and stellohedra, respectively.

\begin{corollary}\label{graph_corr}
  \begin{enumerate}
    \item For a complete graph $K_{n+1}$ having $n+1$ vertices,
    $$
    \beta_k(X^{\R}_{\B(K_{n+1})}) = \sum_{I \in {[n+1] \choose 2k}}\#\text{alternating permutations on $I$}.
    $$
    \item For a path graph $G=P_{n+1}$ having $n+1$ vertices labeled consecutively as $1,\ldots,n+1$,
        $$
        \beta_k(X^\R_{\B(P_{n+1})}) = \sum_{I \in {[n+1] \choose 2k}} \prod_{1 \leq i \leq r_I} \# \text{$312$-avoiding alternating permutations on $I_i$},
        $$
        where the induced subgraph~$G \vert_I$ consists of the even order components $G \vert_{I_1}, \ldots, G \vert_{I_{r_I}}$.
    \item For a star graph $K_{1,n}$ having $n+1$ vertices with the central vertex labeled as $n+1$,
    $$
    \beta_k(X^\R_{\B(P_{n+1})}) = \sum_{I \in {[n] \choose 2k-1}}\#\{\text{alternating permutations on $I \cup \{n+1\}$, whose $1$st entry is $n+1$}\}.
    $$
  \end{enumerate}
\end{corollary}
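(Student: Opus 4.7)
The plan is to deduce each part of Corollary~\ref{graph_corr} by specializing Theorem~\ref{main1} to the three graph families and identifying the alternating $\B(G)\vert_I$-permutations explicitly in each case.

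For part (1), every nonempty subset of the vertex set of $K_{n+1}$ induces a connected subgraph, so $\B(K_{n+1})\vert_I$ is the maximal building set on $I$ for every $I\subseteq [n+1]$. With this maximal building set the prefix $\{x_1,\ldots,x_i\}$ itself always lies in $\B\vert_I$ and trivially witnesses the $\B$-permutation condition, so every permutation on $I$ is a $\B\vert_I$-permutation. Alternating $\B\vert_I$-permutations therefore coincide with ordinary alternating permutations on $I$, and (1) follows at once from Theorem~\ref{main1}.

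For part (3), the building set $\B(K_{1,n})$ consists of the singletons together with every subset of $[n+1]$ containing the central vertex $n+1$. I split on whether $n+1\in I$. If $n+1\notin I$, then $\B\vert_I$ contains only singletons, forcing the unique $\B\vert_I$-permutation to be increasing and hence non-alternating when $|I|\geq 2$. If $n+1\in I$, then while $n+1$ has not yet appeared only singleton witnesses $J$ are available, so the prefix before $n+1$ is strictly increasing; the alternating condition $x_1>x_2$ then forces $n+1$ into position $1$. Once $n+1$ has appeared, the full prefix lies in $\B\vert_I$, so no further restriction is imposed. Thus the alternating $\B\vert_I$-permutations are precisely the alternating permutations on $I'\cup\{n+1\}$ with first entry $n+1$, where $I' = I\setminus\{n+1\} \in \binom{[n]}{2k-1}$, giving (3).

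For part (2), I use two ingredients. First, the identification in \cite{Postnikov2008} of the path-graph nestohedron with the associahedron translates $\B(P_m)$-permutations into $312$-avoiding permutations on the underlying interval. Second, writing $G\vert_I$ as the disjoint union of its interval components $I_1<I_2<\cdots<I_r$ (so that $\max I_j<\min I_{j+1}$), I claim that every $\B\vert_I$-permutation $x$ factors uniquely as a concatenation $w_1 w_2\cdots w_r$ with $w_j$ a $\B\vert_{I_j}$-permutation on $I_j$: the first appearance of any element of $I_{j+1}\cup\cdots\cup I_r$ is immediately the running maximum, after which the $\B$-permutation condition forbids inserting any element of $I_1\cup\cdots\cup I_j$, since these lie in distinct connected components of $\B\vert_{\{x_1,\ldots,x_i\}}$. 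Each junction between $w_j$ and $w_{j+1}$ is then automatically an ascent, so for $x$ to be alternating every partial sum $|w_1|+\cdots+|w_j|$ must be even; equivalently, every $|I_j|$ is even, in which case alternation of $x$ is equivalent to alternation of each $w_j$ separately. Substituting this factorization into Theorem~\ref{main1} yields (2), with $I$ possessing any odd-order component contributing zero. The main obstacle is exactly this decomposition-and-parity argument: all the real work is in tracking the running maximum of a $\B$-permutation across the connected components of $\B\vert_I$ and extracting the even-size condition from the alternating requirement.
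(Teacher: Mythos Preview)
Your proof is correct and follows essentially the same approach as the paper: all three parts specialize Theorem~\ref{main1} by identifying the alternating $\B(G)\vert_I$-permutations explicitly for each graph family. The only notable difference is in part~(2), where the paper invokes Proposition~\ref{no_alter} to dispose of subsets $I$ with an odd-order component and then asserts the block factorization rather tersely, whereas you derive both facts directly---the factorization via the running-maximum argument, and the even-size condition from the forced ascent at each junction between blocks; this makes your treatment of~(2) slightly more self-contained but is otherwise the same argument.
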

In particular, when $n+1=2k$, $\beta_k(X^\R_{\B(P_{2k})})$, which is known to be the $k$th Catalan number~\cite{Choi-Park2015}, represents the number of $312$-avoiding alternating permutations on $[n+1]=[2k]$. 
This provides an alternative proof that the number of $312$-avoiding alternating permutations of length $2k$ is equal to the $k$th Catalan number~\cite{Stanley2015book-Catalan}.

Hochschild polytopes, including Stanley-Pitman polytopes, are chordal nestohedra, but most of them are not graph associahedra, as will be shown in Propositions~\ref{prop_Hoch} and~\ref{prop:Hoch_is_not_graph}.
For each pair~$(m,n)$ of non-negative integers, the $(m,n)$-Hochschild polytope $\Hoch(m,n)$ is defined as the nestohedron associated with the building set $\B$ on a set $[m+n] = \{1,2,\ldots,m+n\}$, where the building set $\B$ consists of $I \subset [m+n]$ that satisfy the following condition:
$$
    \left\vert I \right\vert \geq 2 \Rightarrow I \cap [m+1,m+n] \text{ is either } \emptyset \text{ or } [m+r,m+n] \text{ for } 1 \leq r \leq n,
$$
where $[i,j] \coloneqq \{ \ell \in \Z \colon i \leq \ell \leq j\}$ for integers $i\leq j$.
See Section~\ref{Hoch_subsec} for more details.  

Although the complex and real toric manifolds corresponding to a Hochschild polytope appear interesting, the authors are not aware of any existing topological and algebraic geometrical studies on them  as of the writing of this paper.
It would be appropriate to refer to such complex and real toric manifolds as the \emph{Hochschild} and \emph{real Hochschild variety}, respectively.
Due to our main theorem, we are able to compute the Betti numbers of real Hochschild varieties. 

For non-negative integers $s$ and $r$, consider the set of alternating permutations $(x_1x_2 \cdots x_{s+r})$ on $[s+r]$ such that
$$
s+r \geq x_{i} > x_{j} \geq s+1 \Rightarrow i <j,
$$
which is denoted by $\Alt_{\Hoch(s,r)}$.
\begin{corollary}\label{Hoch_corr}
  For each non-negative integers $m$ and $n$, the $k$th Betti numbers of a real Hochschild variety $X^\R_{\Hoch(m,n)}$ are
    $$
        \beta_{k}(X^\R_{\Hoch(m,n)}) = \sum_{s+r =2k}{m \choose s}\left\vert \Alt_{\Hoch(s,r)} \right\vert,
    $$
    where $s$ and $r$ are non-negative integers.
  
  Moreover, if $n \geq m+2$,
    $$
        \beta_{k}(X^\R_{\Hoch(m,n)}) = \beta_{k}(X^\R_{\Hoch(m,m+2)}),
    $$
    for all $k \geq 0$.
\end{corollary}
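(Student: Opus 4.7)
The plan is to apply Theorem~\ref{main1} to $\B = \Hoch(m, n)$ and classify the subsets $I \in \binom{[m+n]}{2k}$ by the shape of $I_2 := I \cap [m+1, m+n]$. Writing $I_1 := I \cap [m]$, $s := |I_1|$, and $r := |I_2|$, I will argue that $I$ contributes nonzero count to $\beta_k$ only when $I_2$ is a tail $[m+n-r+1, m+n]$, and that in this case the order-preserving bijection $I \leftrightarrow [s+r]$ identifies $\B\vert_I$ with the Hochschild building set $\Hoch(s, r)$ on $[s+r]$. Summing $\binom{m}{s}$ tail-shaped subsets for each pair $(s, r)$ with $s + r = 2k$ then yields the first identity, once alternating $\Hoch(s, r)$-permutations are matched with $\Alt_{\Hoch(s, r)}$.

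The vanishing on non-tail $I_2$ uses a dead-element analysis: any dead element $v = m+j$ (with $[m+j, m+n] \not\subset I$) belongs to no set of $\Hoch(m, n)$ of size $\geq 2$ contained in $I$, so $v$ must appear as the running maximum at its step in any $\B\vert_I$-permutation. Since $|I| = 2k$ is even, $v$ cannot occupy the last position of an alternating permutation; the alternating pattern then forces the element immediately after $v$ to be strictly smaller than $v$, and the $\Hoch$-permutation condition at that step would require the suffix $[m+j, m+n]$ to lie inside the prefix, contradicting deadness of $v$. The heart of the argument is the combinatorial lemma identifying alternating $\Hoch(s, r)$-permutations with $\Alt_{\Hoch(s, r)}$. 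The backward direction is a direct check: at any step where $x_i < \max$ one exhibits an admissible $J$ as a tail $[s+r', s+r]$ (together with $\{x_i\}$ when $x_i \in [s]$) already placed in the prefix. For the forward direction, I list the elements of $[s+1, s+r]$ in placement order $y_1, y_2, \ldots$ and pick the minimal $j$ with $y_{j-1} < y_j$. When $j \geq 3$, the $\Hoch$-permutation condition at the step placing $y_{j-1}$ (whose running maximum is the larger $y_1$) forces the tail $[y_{j-1}, s+r]$ into the prefix, and a cardinality count pins $\{y_1, \ldots, y_{j-1}\}$ to be the top $j-1$ values of $[s+1, s+r]$, which is incompatible with $y_j > y_{j-1}$. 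For $j = 2$ the alternating condition is used essentially: $y_1$ becomes the running maximum at an odd step, so the next element is $< y_1$ and hence distinct from $y_2 > y_1$; some $z \in [s]$ then appears at the subsequent step, and the $\Hoch$-permutation condition there forces $y_1 = s+r$, contradicting $y_1 < y_2 \leq s+r$.

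For the stability statement, I prove $|\Alt_{\Hoch(s, r)}| = 0$ whenever $r \geq s+3$. In an alternating permutation on $[s+r]$ starting with a descent, the maximum value sits at a peak (odd position); adjacency arguments then show that each of the top $r$ values must sit at a peak, except that the last one may occupy the right boundary valley at position $s+r$ when $s+r$ is even. Counting the available peaks (and allowing one for the possible boundary valley) yields $r \leq s+2$. Consequently, for $n \geq m+2$, any term with $r > n$ in the sum satisfies either $s > m$ (so $\binom{m}{s} = 0$) or $s \leq m$, in which case $r > n \geq m+2 \geq s+2$ forces $r \geq s+3$ and $|\Alt_{\Hoch(s, r)}| = 0$; this establishes $\beta_k(X^\R_{\Hoch(m, n)}) = \beta_k(X^\R_{\Hoch(m, m+2)})$ for all $k$. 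The main obstacle is the forward direction of the combinatorial lemma, in particular the case $j = 2$, which requires the alternating hypothesis in an essential way.
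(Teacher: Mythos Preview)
Your proposal is correct and follows essentially the same architecture as the paper's proof: apply Theorem~\ref{main1}, classify $I\in\binom{[m+n]}{2k}$ by whether $I\cap[m+1,m+n]$ is a tail, identify $\B_{m,n}\vert_I$ with $\B_{s,r}$ via the order-preserving bijection, and then invoke the characterization of alternating $\B_{s,r}$-permutations as $\Alt_{\Hoch(s,r)}$ together with the vanishing $|\Alt_{\Hoch(s,r)}|=0$ for $r\geq s+3$.

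The differences are only in how you establish the two auxiliary facts. For the characterization (the paper's Proposition~\ref{Bmn_perm}), the paper simply picks any pair $i<j$ with $m+1\leq x_i<x_j\leq m+n$, observes that $\{x_i\}$ is then a singleton component at step $i$, and derives an immediate contradiction at step $i+1$; your minimal-$j$ case split reaches the same conclusion but with more bookkeeping. For the vanishing (the paper's Lemma~\ref{Hoch_no}), the paper uses pigeonhole to find a block $\{x_{2\ell-1},x_{2\ell}\}\subset[m+1,m+n]$ with $\ell<(m+n)/2$ and notes that then $x_{2\ell}>x_{2\ell+1}$ regardless of whether $x_{2\ell+1}$ lies in $[m]$ or in $[m+1,m+n]$; your peak-counting argument (at most one of the top $r$ values can occupy the terminal valley) gives the same bound $r\leq s+2$ and is arguably cleaner. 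Your dead-element analysis for non-tail $I_2$ is exactly Proposition~\ref{no_alter} specialized to this situation. One small point worth making explicit in your write-up: the displayed sum in the corollary carries an implicit constraint $r\leq n$ (coming from $I\cap[m+1,m+n]$ being a subset of $[m+1,m+n]$), and your stability argument correctly shows these extra constraints become vacuous once $n\geq m+2$.
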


In the last section, we propose some interesting open problems related to the nested set complex and toric topology, motivated by the results of this paper.

\section{Preliminaries}\label{Preliminaries}

\subsection{Real toric manifolds associated with nestohedra}
Let us consider a complex $n$-dimensional toric manifold, denoted by~$X$, and its associated real toric manifold, denoted by~$X^\R$. 
By the fundamental theorem of toric geometry, $X$ corresponds to a non-singular complete fan~$\Sigma$ in $\R^n$.
Since $\Sigma$ is a simplicial fan, one can replace it with the pair $(K,\lambda)$ obtained by 
\begin{enumerate}
  \item a simplicial complex $K$ on the set $V$ of rays in $\Sigma$ such that a $(k-1)$-simplex of $K$ corresponds to a $k$-dimensional cone of $\Sigma$, and
  \item a linear map $\lambda$ from $V$ to $\Z^n$, called a \emph{characteristic map} of $X$, such that the image $\lambda(v)$ is the primitive vector in the direction of $v$ for each $v \in V$.
\end{enumerate}
Then $X$ is completely determined by $(K,\lambda)$.
In a similarly way, $X^\R$ is completely determined by a pair $(K,\lambda^\R)$, where $\lambda^{\R}$ is the map, called  a \emph{mod $2$ characteristic map} of $X^\R$, defined by the composition $V \xrightarrow{\lambda} \Z^n \xrightarrow{\text{mod} 2} \Z_2^n$.
One may regard $\lambda^\R$ by an $n \times m$ $\Z_2$-matrix, where each column is $\lambda^\R(v)$ for $v \in V$, and the row space of this matrix is denoted by $\row \lambda^\R$.
For each element~$\omega \in \row \lambda^\R$, $\omega$ can be regarded as a subset of $V$ via the standard correlation between the power set of $V$ and $\Z_2^m$.
Hence, $\omega$ is associated with the induced subcomplex $K_\omega$ of $K$ with respect to $\omega$.

Remark that the Betti numbers of $X^\R$ is determined by~$(K, \lambda^\R)$ in accordance with the following theorem.
\begin{theorem}[\cite{ST2012},\cite{Choi-Park2017_torsion}]\label{Choi-Park2017_torsion}
Let $X^\R$ be the real toric manifold associated with $(K,\lambda^\R)$.
Then the~$k$th rational Betti number is given by
$$
    \beta_k(X^\R) = \sum_{\omega \in \row  \lambda^\R} \widetilde{\beta}_{k-1} (K_{\omega}),
$$ 
where $K_{\omega}$ is the induced subcomplex of $K$ with respect to $\omega$ and $\widetilde{\beta}_{k-1}$ denotes the $(k-1)$th reduced Betti number.
\end{theorem}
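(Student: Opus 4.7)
The plan is to exhibit $X^\R$ as a free quotient of a larger combinatorial space whose cohomology admits a well-known decomposition indexed by subsets of the vertex set, and then isolate the Betti numbers of $X^\R$ by taking invariants of the covering group action.

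First, I would realize $X^\R$ via the real moment-angle complex $\R\mathcal{Z}_K \subset [-1,1]^m$ associated with $K$, on which $\Z_2^m$ acts coordinate-wise by sign changes. The standard Davis-Januszkiewicz style construction identifies
$$
X^\R \;\cong\; \R\mathcal{Z}_K \big/ \ker(\lambda^\R),
$$
where $\ker(\lambda^\R) \subset \Z_2^m$ denotes the kernel of the $\Z_2$-linear map determined by the matrix $\lambda^\R$. Non-singularity of $\Sigma$ (the smoothness hypothesis on $X$) forces $\ker(\lambda^\R)$ to act freely on $\R\mathcal{Z}_K$, so the quotient map is a regular covering with deck group $\ker(\lambda^\R)$.

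Next, I would invoke the additive decomposition of the cohomology of the real moment-angle complex (a rational version of the Bahri-Bendersky-Cohen-Gitler splitting, available in cellular form in the literature):
$$
\widetilde{H}^k(\R\mathcal{Z}_K;\Q) \;\cong\; \bigoplus_{\omega \subset V} \widetilde{H}^{k-1}(K_{\omega};\Q),
$$
where $V$ is the vertex set of $K$ and $\omega$ ranges over all subsets. The key input, which I would verify by choosing a $\Z_2^m$-equivariant cellular model, is that the residual $\Z_2^m$-action on the $\omega$-summand is by the character $\epsilon \mapsto (-1)^{\langle \omega,\epsilon \rangle}$, where $\omega$ is identified with its characteristic vector in $\Z_2^m$. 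Because the deck group acts freely and coefficients are rational, a transfer/averaging argument gives
$$
H^k(X^\R;\Q) \;\cong\; H^k(\R\mathcal{Z}_K;\Q)^{\ker(\lambda^\R)}.
$$

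To finish, the $\omega$-summand is fixed by all of $\ker(\lambda^\R)$ precisely when $\langle \omega,\epsilon \rangle = 0$ for every $\epsilon \in \ker(\lambda^\R)$, that is, when $\omega$ lies in the annihilator $(\ker \lambda^\R)^{\perp} \subset \Z_2^m$. Under the standard dot product this annihilator coincides with the row space $\row \lambda^\R$ of the matrix $\lambda^\R$, so
$$
\beta_k(X^\R) \;=\; \sum_{\omega \in \row \lambda^\R} \widetilde{\beta}_{k-1}(K_{\omega}),
$$
as asserted. The main obstacle is pinning down the precise form of the $\Z_2^m$-action on the BBCG-type decomposition; once a model in which the splitting and the action are simultaneously visible is fixed, the remaining step is formal linear algebra over $\Z_2$ together with an application of Maschke's theorem to the free $\ker(\lambda^\R)$-action.
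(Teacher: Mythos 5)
The paper does not prove Theorem~\ref{Choi-Park2017_torsion}; it is cited from \cite{ST2012} and \cite{Choi-Park2017_torsion} and used as a black box. Your sketch correctly reconstructs the argument in those references: realizing $X^\R$ as the quotient of the real moment-angle complex $\R\mathcal{Z}_K$ by the free $\ker(\lambda^\R)$-action, invoking the rational Hochster-type splitting $\widetilde{H}^k(\R\mathcal{Z}_K;\Q)\cong\bigoplus_{\omega}\widetilde{H}^{k-1}(K_\omega;\Q)$ with the $\Z_2^m$-action on the $\omega$-summand given by the character $(-1)^{\langle\omega,\cdot\rangle}$, applying the transfer isomorphism over $\Q$, and identifying the invariant summands with $\omega\in(\ker\lambda^\R)^\perp=\row\lambda^\R$ --- so this is essentially the same approach as the cited sources, not a genuinely new route.
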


For a finite set $S$ of natural numbers, a finite collection $\B$ of nonempty subsets of $S$ is a \emph{building set} on $S$ if
\begin{enumerate}
    \item $\{i\} \in \B$ for $i \in S$, and
    \item if $I,J \in \B$ and $I \cap J \neq \emptyset$, then $I \cup J \in \B$. 
\end{enumerate}
An inclusion-maximal subset in $\B$ is called a \emph{connected component} of $\B$, and we call $\B$ a \emph{connected building set} if $S \in \B$.
For a building set $\B$ on $S$ with $I \subset S$, the \emph{restricted building set} $\B \vert_I$ of $\B$ to $I$ is $\{J \in \B \colon J \subset I\}$.

Let $\B$ be a building set on $[n+1] = \{1,\ldots,n+1\}$.
The \emph{nestohedron}~$P_{\B}$ with respect to $\B$ is defined as the Minkowski sum of simplices
$$
    P_\B = \sum_{I \in \B}{\conv (I)},
$$
where $\conv (I)$ is the convex hull of the $i$th coordinate vectors in $\R^{n+1}$ for all $i \in I$.
A subset~$N \subseteq \B \setminus \{[n+1]\}$ is called a \emph{nested set} if it satisfies the following conditions:
\begin{enumerate}
  \item For each $I,J \in N$ one has either $I \subseteq J$, $J \subseteq I$, or $I \cap J = \emptyset$.
  \item For any collection of $k \geq 2$ disjoint subsets $I_1,\ldots,I_k \in N$, the union $I_1 \cup \cdots \cup I_k \notin \B$.
\end{enumerate}
The simplicial complex~$K_\B$ is defined as the collection of all nested sets of~$\B$ and is called the \emph{nested set complex} of~$\B$.
When~$\B$ is connected, according to \cite[Theorem7.4]{Postnikov2009}, $K_\B$ is dual to the boundary complex of~$P_\B$.

In addition, since the dimension of $P_\B$ is $n$, one can define maps~$\lambda_\B \colon \B \setminus \{[n+1]\} \to \Z^n$ and~$\lambda_\B^\R \colon \B \setminus \{[n+1]\} \to \Z_2^n$ by $\lambda_\B(I)$ is the primitive vector in the direction of the outward normal vector of the facet corresponding to $I$, and $\lambda_\B^\R(I) \equiv \lambda_\B(I) \pmod{2}$ for all $I\in \B \setminus \{[n+1]\}$.
More precisely, $\lambda^\R_{\B}$ is given by
$$
    \lambda^\R_{\B}(I) = \sum_{i \in I}e_i,
$$
where $e_i$ is the $i$th standard vector of $\Z^n_2$ for $1 \leq i \leq n$ and $e_{n+1} = \sum_{i=1}^{n}e_i$.

By \cite[Proposition~7.10]{Postnikov2009}, the pair $(K_\B, \lambda_\B)$ determines the toric manifold, denoted by~$X_\B$, and the pair $(K_\B, \lambda^\R_\B)$ determines the real toric manifold, denoted by~$X^\R_\B$.
The main purpose of this paper is to compute the rational Betti numbers of $X^\R_\B$ using Theorem~\ref{Choi-Park2017_torsion}.

Let $\omega_{i}$ be the element of $\row \lambda_{\B}^\R$ corresponding to the $i$th row of $\lambda_{\B}^\R$ as a matrix for $i=1, \ldots, n$.
Then, each element $\omega = \omega_{i_1} + \cdots + \omega_{i_k}$ of $\row \lambda_{\B}^\R$ can be identified with a subset $I_\omega \subset [n+1]$ that has an even cardinality as follows: 
\begin{equation}\label{awesome_build}
 I_\omega =
\begin{cases}
  \{i_1,\ldots,i_k\}, & \mbox{if } k \mbox{ is even} \\
  \{i_1,\ldots,i_k,n+1\}, & \mbox{otherwise}.
\end{cases} 
\end{equation}
For an even cardinality subset $I \subset [n+1]$, denote by $(K_{\B})_I$ the full subcomplex of~$K_{\B}$ induced by all vertices $J$, where $|J \cap I|$ is odd.
Then, one can easily see that $(K_{\B})_\omega$ is equal to $(K_{\B})_{I_\omega}$, and, conversely, for each an even subset $I$ of $[n+1]$, there is an element $\omega \in \row \lambda_\B^\R$ corresponding to~$I$.
Therefore, for a connected building set $\B$ on $[n+1]$, the Betti number formula in Theorem~\ref{Choi-Park2017_torsion} can be written as
\begin{equation}\label{Betti_comp}
  \beta_k(X_\B^\R) = \sum_{\substack{I \subset [n+1] \\ \left\vert I \right\vert \text{ is even}}} \widetilde{\beta}_{k-1} ((K_\B)_I).
\end{equation}

In this paper, we are particularly interested in some special class of building sets.
A building set $\B$ on $S$ is \emph{chordal} if for any $I = \{i_1< \cdots <i_r\} \in \B$, $\{i_s,\ldots,i_r\}$ is also an element of $\B$ for each $1 < s < r$.
It can be immediately observed that every restricted building set of a chordal building set is also chordal.
A nestohedron $P_\B$ is called a \emph{chordal nestohedron} if $\B$ is chordal.

\begin{example}
  A building set
  $$
  \B = \{\{1\} , \{2\} , \{3\} , \{4\} , \{1,4\} , \{3,4\} , \{1,3,4\} , \{2,3,4\} , \{1,2,3,4\}\}
  $$ is an example of connected chordal building set on $\{1,2,3,4\}$.
  The chordal nestohedron $P_\B$ is illustrated as in Figure~\ref{nestohedron_Figure}, and each facet of $P_\B$ is labeled by the corresponding element of $\B$.
  \begin{figure}[h]
    \centering
    \begin{tikzpicture}[scale=1.2]

\coordinate (A) at (0, 0);       
\coordinate (B) at (-0.2, 1);    
\coordinate (C) at (1, 1.5);      
\coordinate (D) at (0.7, 2.4);   
\coordinate (E) at (1.4, 2.6);    
\coordinate (F) at (1.5, 3.3);   
\coordinate (G) at (2, 3.7);      
\coordinate (H) at (2.7, 3.1);   
\coordinate (I) at (3.2, 3.5); 
\coordinate (J) at (3.3, 1.1);   
\coordinate (K) at (4.7, 2.1);  
\coordinate (L) at (5.5, -1);   

\draw[thick] (A) -- (B);
\draw[thick] (A) -- (C);
\draw[thick] (C) -- (D);
\draw[thick] (B) -- (D);
\draw[thick] (D) -- (F);
\draw[thick] (F) -- (H);
\draw[thick] (H) -- (J);
\draw[thick] (C) -- (J);
\draw[thick] (F) -- (G);
\draw[thick] (G) -- (I);
\draw[thick] (H) -- (I);
\draw[thick] (I) -- (K);
\draw[thick] (K) -- (L);
\draw[thick] (J) -- (L);
\draw[thick] (A) -- (L);

\draw[dashed] (B) -- (E);
\draw[dashed] (E) -- (G);
\draw[dashed] (E) -- (K);

\node at (0.4, 1.2) {\small{$\{1, 4\}$}};
\node at (2.3, 3.4) {\small{$\{3, 4\}$}}; 
\node at (1.8, 2.3) {$\{1, 3, 4\}$}; 
\node at (2.1, 0.4) {$\{1\}$}; 
\node at (4, 1.5) {$\{3\}$};
\node at (2.7, 1.4)  [color=gray] {\scriptsize{$\{2\}$}};
\node at (2.65, 2.9)  [color=gray] {\scriptsize{$\{2,3,4\}$}};
\node at (1.1, 2.55)  [color=gray] {\scriptsize{$\{4\}$}}; 
\end{tikzpicture}
    \caption{Chordal nestohedron $P_\B$}\label{nestohedron_Figure}
  \end{figure}
  
Let $K_\B$ be the face structure of $P_\B$ with reverse inclusion, and $\lambda_\B^\R$ its mod~$2$ characteristic map.
Then, $\lambda_\B^\R$ is represented by the~$3 \times 8$~matrix 
$$
\lambda_\B^\R =
\left(
\begin{array}{ccccccccc}
\{1\} & \{2\} & \{3\} & \{4\} & \{1,4\} & \{3,4\} & \{1,3,4\} & \{2,3,4\}\\
\hline
1 & 0 & 0 & 1 & 0 & 1 & 0 & 1\\
0 & 1 & 0 & 1 & 1 & 1 & 1 & 0\\
0 & 0 & 1 & 1 & 1 & 0 & 0 & 0\\
\end{array}
\right),
$$
where the first row indicates the labels of each facet.
Let $\omega_i$ be the $i$th row of $\row \lambda_\B^\R$ for $1 \leq i \leq 3$.
For instance, if we take $\omega=\omega_1 + \omega_2$, the corresponding even subset is $I_\omega = \{1,2\}$. 
One can see that both $(K_\B)_{\omega_1+\omega_2}$ and $(K_\B)_{\{1,2\}}$ are the full subcomplex induced by the set
$$
    \{ J \in \B \colon |J \cap \{1,2\}| \equiv 1 \pmod{2}\} = \{\{1\}, \{2\}, \{1,4\}, \{1,3,4\}, \{2,3,4\}\}.
$$
In this case, $(K_\B)_{I_\omega}$ is contractible, and, hence, $\tilde{\beta}_\ast ((K_\B)_{I_\omega})$ vanishes.
Therefore, it contribute nothing to the Betti number of $X_\B^\R$.
If we take $\omega = \omega_1+\omega_2+\omega_3$, the corresponding even subset is $I_\omega = \{1,2,3,4\}$.
One can see that both~$(K_\B)_{\omega}$ and~$(K_\B)_{I_\omega}$ are the full subcomplex induced by the set
$$
\{\{1\},\{2\},\{3\},\{4\},\{1,3,4\},\{2,3,4\}\}
$$ which is homotopy equivalent to a circle~$S^1$.
This follows from the fact that the simplicial complex~$K_\B$ is homeomorphic to the $2$-sphere~$S^2$, and the removal of the two non-adjacent $2$-faces~$\{1,4\}$ and~$\{3,4\}$ yields the subcomplex~$(K_\B)_{\omega}$, which is homeomorphic to~$S^2$ with two distinct points removed.  
It is well known that such a space is homotopy equivalent to~$S^1$.
Therefore, it contributes one to~$\beta_2 (X_\B^\R)$.
\end{example}

From the above example and many other similar examples, one can find interesting patterns in the computation of the reduced Betti numbers of $(K_\B)_I$;
if $|I|=2k$, then $(K_\B)_I$ is always homotopy equivalent to a bouquet of $(k-1)$-dimensional spheres, so it contributes only to~$\beta_{k}(X_\B^\R)$.
The purposes of this paper is to show that this phenomenon is not just a coincidence of specific cases, but rather a remarkable fact that holds generally for chordal building sets.
Refer Section~\ref{section3}.
In addition, we also show in Section~\ref{sec:alt_B_perm} that the number of spheres in each bouquet is obtainable by counting permutations satisfying conditions dependent on $\B$.

\subsection{Graph associahedra}\label{graph_subsec}
A building set $\B$ on $S$ is said to be \emph{graphical} if there is a finite simple graph $G$ on $S$ such that $I \in \B$ if and only if the induced subgraph $G \vert_I$ on $I$ is connected.
In this case, $\B$ is denoted by $\B(G)$.
Furthermore, every restricted building set $\B(G)\vert_I$ to $I$ is exactly $\B(G\vert_I)$, where $G \vert_I$ is the induced subgraph on $I$.
For a finite simple graph $G$, the nestohedron $P_{\B(G)}$ is called a \emph{graph associahedron}.
Graph associahedra have been widely studied in \cite{Carr-Devadoss2006}, \cite{Postnikov2009}, \cite{ToledanoLaredo2008}, and \cite{Zel2006}.

The Betti number of the real toric manifolds corresponding to a graphical building set is well-studied in \cite{Choi-Park2015}.
For a simple graph $G$ on $S$, the \emph{$a$-number} $a(G)$ of $G$ is defined as follows:
\begin{itemize}
  \item $a(\emptyset) = sa(\emptyset) = 1$.
  \item If $G$ is connected, 
  $$
  sa(G) = \begin{cases}
           - \underset{I \subsetneq S}{\sum}sa(G \vert_I), & \mbox{if $\left\vert S \right\vert$ is even,}  \\
            0, & \mbox{otherwise,}
          \end{cases}
  $$
  and $a(G) = \left\vert sa(G) \right\vert$.
  \item $a(G)$ is the product of $a(G_i)$ for $1 \leq i \leq l$, where $G_1,\ldots,G_l$ are the connected components of $G$.
\end{itemize}
By \cite[Theorem~1.1]{Choi-Park2015}, the $k$th Betti numbers of the associated real toric manifold~$X^\R_{\B(G)}$ is given by
$$
\beta_k(X^\R_{\B(G)}) = \sum_{I \subset {S \choose 2k}}a(G\vert_{I}).
$$

A finite simple graph $G$ on a set $S$ is \emph{chordal} if $G$ contains no induced $k$-cycle for $k \geq 4$. 
Refer~\cite{bondy08} for additional details.
The building set of a chordal graph is not necessary to be chordal.
However, if we give an appropriate labeling to nodes of a chordal graph, we can obtain a chordal building set.
A \emph{perfect elimination ordering} in $G$ is an ordering of the vertices of $G$ such that $G$ has no induced subgraph $G \vert_{\{i,j,k\}}$ with edges $(i,j),(i,k)$ but without the edge $(j,k)$, where $i < j < k$.
A graph $G$ is chordal if and only if $G$ admits a perfect elimination ordering~\cite{Fulkerson1965}.
According to \cite[Proposition~9.4]{Postnikov2008}, $G$ is a chordal graph labeled by a perfect elimination ordering if and only if the graphical building set $\B(G)$ is a chordal building set.

\begin{example}\label{graph_exam}
    Consider a path graph $P_4$ with $4$ vertices, which is a chordal graph.
    If $P_4$ is labeled by
      \begin{center}
        \begin{tikzpicture}[scale=1.2]
  \node (1) at (0,0) {$2$};
  \node (2) at (1,0) {$3$};
  \node (3) at (2,0) {$1$};
  \node(4) at (3,0) {$4$};
  
  \draw (1) -- (2);
  \draw (2) -- (3);
  \draw (3) -- (4);
\end{tikzpicture},
      \end{center}
then the graphical building set $\B(P_4)$ of $P_4$
    $$
    \{\{1\},\{2\},\{3\},\{4\}, \{2,3\},\{1,3\},\{1,4\}, \{1,2,3\},\{1,3,4\},\{1,2,3,4\}\}
    $$
    is not chordal.
    However, if we label $P_4$ by
      \begin{center}
        \begin{tikzpicture}[scale=1.2]
      \node (1) at (0,0) {$1$};
      \node (2) at (1,0) {$2$};
      \node (3) at (2,0) {$3$};
      \node(4) at (3,0) {$4$};
      
      \draw (1) -- (2);
      \draw (2) -- (3);
      \draw (3) -- (4);
        \end{tikzpicture}
      \end{center}
    that gives a perfect elimination ordering, then $\B(P_4)$
        $$
        \{\{1\},\{2\},\{3\},\{4\}, \{1,2\},\{2,3\},\{3,4\} , \{1,2,3\},\{2,3,4\}, \{1,2,3,4\} \}
        $$
      is chordal.
\end{example}

\subsection{Hochschild polytopes}\label{Hoch_subsec}
For non-negative integers $m$ and $n$, the \emph{$(m,n)$-Hochschild polytope}~$\Hoch(m,n)$ was introduced in \cite{Vincent-Daria2023}.
To describe its definition, we briefly introduce some notions following:
\begin{itemize}
  \item A finite sequence $\bS = \{(s_1,c_1),\ldots,(s_\ell,c_\ell)\}$ is called an \emph{$m$-lighted $n$-shade} if
\begin{enumerate}
  \item $s_i$ is a (possibly empty) tuple of positive integers and has total sum $n$,
  \item $c_1,\ldots,c_\ell$ are pairwise disjoint subsets of $[m]$ that their union $c_1 \cup c_2 \cup \cdots \cup c_\ell$ is $[m]$. 
  \item $s_i$ and $c_i$ cannot be both empty for each $i=1,\ldots,\ell$.
\end{enumerate}
  \item A \emph{rank} of $\bS = \{(s_1,c_1),\ldots,(s_\ell,c_\ell)\}$ is defined by $m - \ell  + \sum_{i = 1}^{\ell} \left\vert s_i\right\vert$, where~$\left\vert s_i \right\vert$ denotes the cardinality of~$s_i$.
  \item For $m$-lighted $n$-shades $\bS = \{(s_1,c_1),\ldots,(s_\ell,c_\ell)\}$ and $\bS'$, we denote by $\bS' \to \bS$ if $\bS$ and $\bS'$ satisfy one of the following:
  \begin{enumerate}
    \item there is $1 \leq i < \ell$ such that $\bS'$ is obtained from $\bS$ by exchange $(s_i,c_i),(s_{i+1},c_{i+1})$ to $(s_i+s_{i+1},c_i\cup c_{i+1})$, where $s_i = (s_i^1,\ldots,s_i^{p}), s_{i+1} = (s_{i+1}^1,\ldots,s_{i+1}^q)$, and $s_i+s_{i+1} = (s_i^1,\ldots,s_i^{p},s_{i+1}^1,\ldots,s_{i+1}^q)$.
    \item there is $1 \leq i \leq \ell$ such that $\bS'$ is obtained from $\bS$ by exchange $(s_i,c_i)$ to $(s'_i,c_i)$, where $s_i =(s_i^1,\ldots,s_i^{p})$, $s'_i = (s_i^1,\ldots,s_i^{r_1},s_i^{r_2},\ldots,s_i^p)$, and $s_i^r = s_i^{r_1}+s_i^{r_2}$ for some $1 \leq r \leq p$.
    \end{enumerate}
     \item $(\cP_{(m,n)},\leq)$ is the poset of $m$-lighted $n$-shades with the partial order $\leq$ defined by 
     $$\bS' \leq \bS \Leftrightarrow
      \begin{cases}
        \bS' \to \bS_1 \to \cdots \to \bS,\\
        \bS' = \bS.
      \end{cases}$$
\end{itemize}

For an $m$-lighted $n$-shade $\bS = \{(s_1,c_1),\ldots,(s_\ell,c_\ell)\}$, $\bS$ is visually represented as a vertical line with the tuples $s_1,\ldots,s_\ell$ on the left, and the sets of $c_1,\ldots,c_\ell$ on the right.
Since there is no confusion, we use the notation $a_1\cdots a_k$ for the tuple $(a_1,\ldots,a_k)$ or the set $\{a_1,\ldots,a_k\}$.
See Figure~\ref{figure1} for examples.
\begin{figure}
  \centering
\begin{tikzpicture}
    \draw (0,1.2) -- (0,1.8);
    \node[anchor=west] at (0,1.5) {12};
    \node[anchor=east] at (0,1.5) {1111};
    \node[anchor=north] at (0,0.5) {rank 5};
    \node[anchor=west] at (1,1.5) {$\to$};

    \draw (2.6,1) -- (2.6,2);
    \node[anchor=west] at (2.6,1.7) {2};
    \node[anchor=east] at (2.6,1.7) {111};
    \node[anchor=west] at (2.6,1.3) {1};
    \node[anchor=east] at (2.6,1.3) {1};
    \node[anchor=north] at (2.6,0.5) {rank 4};
    \node[anchor=west] at (3.5,1.5) {$\to$};

    \draw (5,1) -- (5,2);
    \node[anchor=west] at (5,1.7) {2};
    \node[anchor=east] at (5,1.7) {12};
    \node[anchor=west] at (5,1.3) {1};
    \node[anchor=east] at (5,1.3) {1};
    \node[anchor=north] at (5,0.5) {rank 3};
    \node[anchor=west] at (6,1.5) {$\to$};

    \draw (7.5,0.8) -- (7.5,2.2);
    \node[anchor=west] at (7.5,1.9) {2};
    \node[anchor=east] at (7.5,1.9) {12};
    \node[anchor=west] at (7.5,1.5) {1};
    \node[anchor=east] at (7.5,1.1) {1};
    \node[anchor=north] at (7.5,0.5) {rank 2};
    \node[anchor=west] at (8.5,1.5) {$\to$};

    \draw (10,0.7) -- (10,2.5);
    \node[anchor=west] at (10,2.2) {2};
    \node[anchor=east] at (10,2.2) {1};
    \node[anchor=east] at (10,1.8) {2};
    \node[anchor=west] at (10,1.4) {1};
    \node[anchor=east] at (10,1) {1};
    \node[anchor=north] at (10,0.5) {rank 1};
    \node[anchor=west] at (11,1.5) {$\to$};

    \draw (12.5,0.6) -- (12.5,2.7);
    \node[anchor=west] at (12.5,2.5) {2};
    \node[anchor=east] at (12.5,2.2) {1};
    \node[anchor=east] at (12.5,1.8) {2};
    \node[anchor=west] at (12.5,1.4) {1};
    \node[anchor=east] at (12.5,1) {1};
    \node[anchor=north] at (12.5,0.5) {rank 0};
\end{tikzpicture}
  \caption{A maximal chain of $2$-lighted $4$-shades}\label{figure1}
\end{figure}

\begin{remark}
  The poset of $1$-lighted $n$-shades gives a realization of the Hochschild lattice, which were introduced in \cite{Chapoton2020}.
Because of nice lattice properties of the Hochschild lattice, it has been studied in combinatorics \cite{Combe-Camille2021} and \cite{Muhle_Henri2022}.
\end{remark}

For non-negative integers $m$ and $n$, the $(m,n)$-Hochschild polytope $\Hoch(m,n)$ is defined as an abstract polytope whose face poset is anti-isomorphic to $(\cP_{(m,n)},\leq)$.
Refer \cite{Vincent-Daria2023} for the well-definedness of $\Hoch(m,n)$.
As brief examples, see Figure~\ref{figure2} for $(m,n)$-Hochschild polytopes and their face posets, represented by $m$-lighted $n$-shades for $m+n =3$.
\begin{figure}
  \centering
\begin{tikzpicture}[scale=0.52]
    \coordinate (A) at (-16,6);
    \coordinate (B) at (-13.45,4.5);
    \coordinate (C) at (-13.45,1.5);
    \coordinate (D) at (-16,0);
    \coordinate (E) at (-18.55,1.5);
    \coordinate (F) at (-18.55,4.5);

    \draw (A) -- (B) -- (C) -- (D) -- (E) -- (F) -- cycle;
    
    \draw (-16,6.1) -- (-16,7.3);
    \node[anchor=west] at (-16,7.1) {\tiny 1};
    \node[anchor=west] at (-16,6.7) {\tiny 2};
    \node[anchor=west] at (-16,6.3) {\tiny 3};
    
    \draw (-17.5,5.6) -- (-17.5,6.6);
    \node[anchor=west] at (-17.5,6.3) {\tiny 1};
    \node[anchor=west] at (-17.5,5.9) {\tiny 23};
    
    \draw (-14.5,5.6) -- (-14.5,6.6);
    \node[anchor=west] at (-14.5,6.3) {\tiny 12};
    \node[anchor=west] at (-14.5,5.9) {\tiny 3};
    
    \draw (-13.1,4.6) -- (-13.1,5.8);
    \node[anchor=west] at (-13.1,5.6) {\tiny 2};
    \node[anchor=west] at (-13.1,5.2) {\tiny 1};
    \node[anchor=west] at (-13.1,4.8) {\tiny 3};
    
    \draw (-13.1,2.5) -- (-13.1,3.5);
    \node[anchor=west] at (-13.1,3.2) {\tiny 2};
    \node[anchor=west] at (-13.1,2.8) {\tiny 13};
    
    \draw (-19.5,2.5) -- (-19.5,3.5);
    \node[anchor=west] at (-19.5,3.2) {\tiny 13};
    \node[anchor=west] at (-19.5,2.8) {\tiny 2};
    
    \draw (-13.1,1.4) -- (-13.1,0.2);
    \node[anchor=west] at (-13.1,0.4) {\tiny 1};
    \node[anchor=west] at (-13.1,0.8) {\tiny 3};
    \node[anchor=west] at (-13.1,1.2) {\tiny 2};
    
    \draw (-19.2,1.4) -- (-19.2,0.2);
    \node[anchor=west] at (-19.2,0.4) {\tiny 2};
    \node[anchor=west] at (-19.2,0.8) {\tiny 1};
    \node[anchor=west] at (-19.2,1.2) {\tiny 3};
    
    \draw (-16,-1.4) -- (-16,-0.2);
    \node[anchor=west] at (-16,-1.2) {\tiny 1};
    \node[anchor=west] at (-16,-0.8) {\tiny 2};
    \node[anchor=west] at (-16,-0.4) {\tiny 3};
    
    \draw (-19.2,4.6) -- (-19.2,5.8);
    \node[anchor=west] at (-19.2,5.6) {\tiny 1};
    \node[anchor=west] at (-19.2,5.2) {\tiny 3};
    \node[anchor=west] at (-19.2,4.8) {\tiny 2};
    
    \draw (-17.5,-0.4) -- (-17.5,0.6);
    \node[anchor=west] at (-17.5,0.3) {\tiny 3};
    \node[anchor=west] at (-17.5,-0.1) {\tiny 12};
    
    \draw (-14.5,-0.4) -- (-14.5,0.6);
    \node[anchor=west] at (-14.5,0.3) {\tiny 23};
    \node[anchor=west] at (-14.5,-0.1) {\tiny 1};
    
    \draw (-16,2.6) -- (-16,3.4);
    \node[anchor=west] at (-16,3) {\tiny 123};

    \coordinate (A) at (-8,6);
    \coordinate (B) at (-5.45,4.5);
    \coordinate (C) at (-5.45,1.5);
    \coordinate (D) at (-8,0);
    \coordinate (E) at (-10.55,1.5);
    \coordinate (F) at (-10.55,4.5);

    \draw (A) -- (B) -- (C) -- (D) -- (E) -- (F) -- cycle;
    
    \draw (-8,6.1) -- (-8,7.3);
    \node[anchor=west] at (-8,7.1) {\tiny 1};
    \node[anchor=west] at (-8,6.7) {\tiny 2};
    \node[anchor=east] at (-8,6.3) {\tiny 1};
    
    \draw (-9.5,5.6) -- (-9.5,6.6);
    \node[anchor=west] at (-9.5,6.3) {\tiny 1};
    \node[anchor=west] at (-9.5,5.9) {\tiny 2};
    \node[anchor=east] at (-9.5,5.9) {\tiny 1};
    
    \draw (-6.5,5.6) -- (-6.5,6.6);
    \node[anchor=west] at (-6.5,6.3) {\tiny 12};
    \node[anchor=east] at (-6.5,5.9) {\tiny 1};
    
    \draw (-4.8,4.6) -- (-4.8,5.8);
    \node[anchor=west] at (-4.8,5.6) {\tiny 2};
    \node[anchor=west] at (-4.8,5.2) {\tiny 1};
    \node[anchor=east] at (-4.8,4.8) {\tiny 1};
    
    \draw (-4.8,2.5) -- (-4.8,3.5);
    \node[anchor=west] at (-4.8,3.2) {\tiny 2};
    \node[anchor=west] at (-4.8,2.8) {\tiny 1};
    \node[anchor=east] at (-4.8,2.8) {\tiny 1};
    
    \draw (-11.2,2.5) -- (-11.2,3.5);
    \node[anchor=west] at (-11.2,3.2) {\tiny 1};
    \node[anchor=east] at (-11.2,3.2) {\tiny 1};
    \node[anchor=west] at (-11.2,2.8) {\tiny 2};
    
    \draw (-4.8,1.4) -- (-4.8,0.2);
    \node[anchor=west] at (-4.8,0.4) {\tiny 1};
    \node[anchor=east] at (-4.8,0.8) {\tiny 1};
    \node[anchor=west] at (-4.8,1.2) {\tiny 2};
    
    \draw (-11.2,1.4) -- (-11.2,0.2);
    \node[anchor=west] at (-11.2,0.4) {\tiny 2};
    \node[anchor=west] at (-11.2,0.8) {\tiny 1};
    \node[anchor=east] at (-11.2,1.2) {\tiny 1};
    
    \draw (-8,-1.4) -- (-8,-0.2);
    \node[anchor=west] at (-8,-1.2) {\tiny 1};
    \node[anchor=west] at (-8,-0.8) {\tiny 2};
    \node[anchor=east] at (-8,-0.4) {\tiny 1};
    
    \draw (-11.2,4.6) -- (-11.2,5.8);
    \node[anchor=west] at (-11.2,5.6) {\tiny 1};
    \node[anchor=east] at (-11.2,5.2) {\tiny 1};
    \node[anchor=west] at (-11.2,4.8) {\tiny 2};
    
    \draw (-9.5,-0.4) -- (-9.5,0.6);
    \node[anchor=east] at (-9.5,0.3) {\tiny 1};
    \node[anchor=west] at (-9.5,-0.1) {\tiny 12};
    
    \draw (-6.5,-0.4) -- (-6.5,0.6);
    \node[anchor=west] at (-6.5,0.3) {\tiny 2};
    \node[anchor=east] at (-6.5,0.3) {\tiny 1};
    \node[anchor=west] at (-6.5,-0.1) {\tiny 1};
    
    \draw (-8,2.6) -- (-8,3.4);
    \node[anchor=west] at (-8,3) {\tiny 12};
    \node[anchor=east] at (-8,3) {\tiny 1};

    \coordinate (A) at (0,6);
    \coordinate (B) at (2.55,4.5);
    \coordinate (C) at (2.55,1.5);
    \coordinate (D) at (-2.55,-1);
    \coordinate (E) at (-2.55,4.5);

    \draw (A) -- (B) -- (C) -- (D) -- (E) -- cycle;
    
    \draw (0,6.1) -- (0,7.3);
    \node[anchor=west] at (0,7.1) {\tiny 1};
    \node[anchor=east] at (0,6.7) {\tiny 1};
    \node[anchor=east] at (0,6.3) {\tiny 1};
    
    \draw (-1.5,5.3) -- (-1.5,6.3);
    \node[anchor=west] at (-1.5,6.1) {\tiny 1};
    \node[anchor=east] at (-1.5,5.7) {\tiny 11};
    
    \draw (1.5,5.5) -- (1.5,6.5);
    \node[anchor=east] at (1.5,6.2) {\tiny 1};
    \node[anchor=west] at (1.5,6.2) {\tiny 1};
    \node[anchor=east] at (1.5,5.8) {\tiny 1};
    
    \draw (3.1,4.6) -- (3.1,5.8);
    \node[anchor=east] at (3.1,5.6) {\tiny 1};
    \node[anchor=west] at (3.1,5.2) {\tiny 1};
    \node[anchor=east] at (3.1,4.8) {\tiny 1};
    
    \draw (3.3,2.5) -- (3.3,3.5);
    \node[anchor=east] at (3.3,3.2) {\tiny 1};
    \node[anchor=west] at (3.3,2.8) {\tiny 1};
    \node[anchor=east] at (3.3,2.8) {\tiny 1};
    
    \draw (3,1.4) -- (3,0.2);
    \node[anchor=west] at (3,0.4) {\tiny 1};
    \node[anchor=east] at (3,0.8) {\tiny 1};
    \node[anchor=east] at (3,1.2) {\tiny 1};
    
    \draw (0.2,-1) -- (0.2,0);
    \node[anchor=west] at (0.2,-0.7) {\tiny 1};
    \node[anchor=east] at (0.2,-0.3) {\tiny 11};
    
    \draw (-3,-1.1) -- (-3,-2);
    \node[anchor=west] at (-3,-1.7) {\tiny 1};
    \node[anchor=east] at (-3,-1.4) {\tiny 2};
    
    \draw (-3.2,4) -- (-3.2,4.8);
    \node[anchor=west] at (-3.2,4.6) {\tiny 1};
    \node[anchor=east] at (-3.2,4.2) {\tiny 2};
    
    \draw (-0.5,2.6) -- (-0.5,3.4);
    \node[anchor=west] at (-0.5,3) {\tiny 1};
    \node[anchor=east] at (-0.5,3) {\tiny 11};
    
    \draw (-3.2,1.6) -- (-3.2,2.4);
    \node[anchor=west] at (-3.2,2) {\tiny 1};
    \node[anchor=east] at (-3.2,2) {\tiny 2};
    
    \coordinate (a) at (8,6);
    \coordinate (b) at (10,3);
    \coordinate (c) at (5.45,-1);
    \coordinate (d) at (5.45,4.5);

    \draw (a) -- (b) -- (c) -- (d) -- cycle;

    \draw (8,6.1) -- (8,7.3);
    \node[anchor=east] at (8,7.1) {\tiny 1};
    \node[anchor=east] at (8,6.7) {\tiny 1};
    \node[anchor=east] at (8,6.3) {\tiny 1};

    \draw (6.5,5.3) -- (6.5,6.3);
    \node[anchor=east] at (6.5,6.1) {\tiny 1};
    \node[anchor=east] at (6.5,5.7) {\tiny 11};

    \draw (10.1,4.5) -- (10.1,5.5);
    \node[anchor=east] at (10.1,5.2) {\tiny 11};
    \node[anchor=east] at (10.1,4.8) {\tiny 1};

    \draw (10.8,2.5) -- (10.8,3.5);
    \node[anchor=east] at (10.8,3.2) {\tiny 2};
    \node[anchor=east] at (10.8,2.8) {\tiny 1};

    \draw (9.2,0.3) -- (9.2,1);
    \node[anchor=east] at (9.2,0.7) {\tiny 21};

    \draw (5.1,-0.9) -- (5.1,-1.7);
    \node[anchor=east] at (5.1,-1.3) {\tiny 3};

    \draw (5,4) -- (5,4.8);
    \node[anchor=east] at (5,4.6) {\tiny 1};
    \node[anchor=east] at (5,4.2) {\tiny 2};

    \draw (8,2.6) -- (8,3.4);
    \node[anchor=east] at (8,3) {\tiny 111};

    \draw (5,1.6) -- (5,2.4);
    \node[anchor=east] at (5,2) {\tiny 12};
\end{tikzpicture}
  \caption{$\Hoch(m,n)$ for $(m,n) = (3,0), (2,1), (1,2)$ and $(0,3)$}\label{figure2}
\end{figure}

Let $\B_{m,n}$ be the set consisting of non-empty subsets $I$ of $[m+n]$ such that
\begin{equation}\label{Hoch_build}
    \left\vert I \right\vert \geq 2 \Rightarrow I \cap [m+1,m+n] \text{ is either } \emptyset \text{ or } [m+r,m+n] \text{ for } 1 \leq r \leq n,
\end{equation}
where $[i,j]$ denotes the set of all integers $\ell$ such that $i \leq \ell \leq j$.
It is clear that $\B_{m,n}$ is a connected chordal building set on $[m+n]$.

\begin{proposition}\label{prop_Hoch}
    For non-negative integers $m$ and $n$, the nestohedron $P_{\B_{m,n}}$ has a same combinatorial structure as the $(m,n)$-Hochschild polytope $\Hoch(m,n)$.    
\end{proposition}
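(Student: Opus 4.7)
The plan is to construct a bijection $\Phi$ between the set of faces of $P_{\B_{m,n}}$ and $\cP_{(m,n)}$ which, under the natural partial orders, is order-reversing. Since faces of a nestohedron $P_{\B}$ correspond to nested sets of $\B\setminus\{S\}$ (with smaller nested sets giving larger faces), and $\Hoch(m,n)$ has face poset anti-isomorphic to $(\cP_{(m,n)},\leq)$ by definition, the proposition follows from the existence of such a $\Phi$.

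First I would pin down the structure of $\B_{m,n}$: apart from singletons, its elements are either subsets $I\subseteq[m]$ with $|I|\geq 2$, or sets of the form $C\cup[m+r,m+n]$ with $C\subseteq[m]$ and $1\leq r\leq n$. The key structural observation is that the shade-parts $[m+r,m+n]$ form a linear chain under inclusion, so any two non-singleton elements of $\B_{m,n}$ both meeting $[m+1,m+n]$ are automatically nested on their shade-parts; this one-dimensional structure on the shade side precisely mirrors the ordered composition of $n$ encoded by the tuples $s_i$ of an $m$-lighted $n$-shade. Next I would define $\Phi$ by reading off a shade from a nested set $N$ as follows. The elements of $N$ containing shades yield a chain of intervals $[m+n]\supsetneq[m+r_1,m+n]\supsetneq\cdots\supsetneq[m+r_t,m+n]$, and the consecutive gap sizes $r_1-1,\,r_2-r_1,\,\ldots$ become the entries of the tuples $s_i$ in $\Phi(N)$; meanwhile the ordered partition $(c_1,\ldots,c_\ell)$ of $[m]$ is induced by the light-parts $C_j$ of these elements, together with the pure $[m]$-subsets in $N$, listed in the order prescribed by the shade-chain. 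The inverse map is obtained by running this construction in reverse.

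The main obstacle is verifying that $\Phi$ matches covering relations in the two posets. Removing a single element $I\in N$ (a cover in the nested-set order) must correspond to a cover $\Phi(N)\to\Phi(N\setminus\{I\})$ in $\cP_{(m,n)}$ given by either operation~(1) (merging two adjacent pairs) or operation~(2) (merging two adjacent entries of some $s_i$). The case analysis distinguishes three situations: if $I$ is a subset of $[m]$ or a top-level mixed element $C\cup[m+r,m+n]$ whose removal coalesces two consecutive pairs, the corresponding move is operation~(1); if $I=C\cup[m+r,m+n]$ is a mid-level shade element that sits strictly between two other shade-chain elements inside the same block, its removal merges two adjacent entries of the relevant $s_i$, which is operation~(2); and pure $[m]$-elements inside a block fall into either type depending on whether their removal merges a pair of $c_j$'s or an entry of $s_i$. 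The linearly-ordered structure on the shade-side of $\B_{m,n}$ (the chordality built into the definition) is exactly what ensures these cases are exhaustive and mutually exclusive, so the verification completes the proof.
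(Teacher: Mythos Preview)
Your approach is the same bijection the paper constructs, described from the opposite direction: the paper builds an explicit map from shades to nested sets via a recursion (an operator $\varphi$ that walks $\bS$ along a canonical chain toward the maximal shade, collecting one building-set element $B_{\varphi^i(\bS)}$ at each step and setting $\Phi(\bS)=\{B_{\varphi^i(\bS)}:0\le i\le m+n-2-r\}$), while you sketch the inverse from nested sets to shades. Neither you nor the paper gives a detailed verification of the order isomorphism, so in spirit the two arguments are at the same level.

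That said, your description of $\Phi$ is not yet precise enough to determine a map, and the case analysis contains errors. First, singletons $\{m+j\}$ with $1\le j<n$ belong to $\B_{m,n}$ and may occur in a nested set $N$; they meet $[m+1,m+n]$ but are \emph{not} of the form $[m+r,m+n]$, so your assertion that the shade-meeting elements of $N$ yield a chain of such intervals is false as stated. In the paper's construction these interior singletons are exactly the $B_\bS$ produced when $s_\ell$ has an entry $\ge 2$, i.e.\ they encode the operation-(2) refinements inside a single tuple $s_i$; you never say how they enter your $\Phi$. Second, you do not explain how the list of gap sizes is partitioned into the separate tuples $s_1,\ldots,s_\ell$ (where the block boundaries fall). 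Third, the claim that a pure $[m]$-element can correspond to operation~(2) is wrong: a subset of $[m]$ carries no shade data, and removing it from $N$ can only merge two adjacent $c_j$'s, never two entries of an $s_i$. Finally, the covering direction is reversed: since $N\setminus\{I\}\subsetneq N$ corresponds to the higher-rank shade, the arrow is $\Phi(N\setminus\{I\})\to\Phi(N)$, not the other way round. All of this is repairable once you write down $\Phi$ explicitly, but as written the map is not well defined.
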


\begin{proof}
    Consider an $m$-lighted $n$-shade
    $$
    \bS = \{(s_1,c_1),\ldots,(s_\ell = (s^1_\ell,\ldots,s^p_\ell),c_\ell)\},
    $$
    whose rank is not $m+n-1$.
    When there exists an index $1 \leq i \leq p$ such that $s^i_\ell$ is at least $2$, we denote the maximal such index by $1 \leq q \leq p$.
    Define a subset
    $$
      B_\bS =  \begin{cases}
      c_\ell, & \mbox{if $s_\ell = \emptyset$}, \\
        c_\ell \cup \{m+n-i \colon 0 \leq i < \left\vert s_\ell\right\vert\}, & \mbox{if $s_\ell \neq \emptyset$ and all entries of $s_\ell$ are $1$}, \\
        \{m+n-(p+1-q)\}, & \mbox{otherwise,}
      \end{cases}
    $$
    of $[m+n]$, and an $m$-lighted $n$-shade $\varphi(\bS)$ obtained from $\bS$ as follows:
    \begin{enumerate}
      \item By exchange $(s_{\ell-1},c_{\ell-1}),(s_{\ell},c_{\ell})$ to $(s_{\ell-1}+s_{\ell},c_{\ell-1}\cup c_{\ell})$ if all entries of $s_\ell$ are $1$,
      \item By exchange $s_\ell = (\ldots,s^q_\ell,\ldots)$ to $(\ldots,s_{\ell}^q-1,1,\ldots)$, otherwise.
    \end{enumerate}
    For any non-negative integer $i$, $\varphi^i(\bS)$ is denoted by the composition
    $$
    \varphi^i(\bS)=\begin{cases}
      \bS & \mbox{if } i = 0 \\
      \underbrace{\varphi \cdot \varphi \cdot \cdots \cdot \varphi}_\text{$i$ times}(\bS), & \mbox{otherwise}.
    \end{cases}
    $$
    
    For an $m$-lighted $n$-shade $\bS$ whose rank is $r$, define
    $$
    \Phi(\bS) = \{B_{\varphi^i(\bS)} \colon 0 \leq i \leq m+n-2-r\}.
    $$
    Indeed, $\Phi$ induces a well-defined order isomorphism from $(\cP_{(m,n)}\setminus \{[m+n]\},\leq)$ to the nested set complex $K_{\B_{m,n}}$.
\end{proof}

\begin{example}
Let $\cC$ be a chain of $2$-lighted $4$-shades as follows:\\
\begin{center}
\begin{tikzpicture}
    \node[anchor=east] at (7,1.5) {$\bS =$};
    
    \draw (7.5,1) -- (7.5,2);
    \node[anchor=west] at (7.5,1.7) {2};
    \node[anchor=east] at (7.5,1.7) {12};
    \node[anchor=west] at (7.5,1.3) {1};
    \node[anchor=east] at (7.5,1.3) {1};
    \node[anchor=north] at (7.5,0.5) {rank 3};
    \node[anchor=west] at (8.5,1.5) {$\leq$};
    \draw (10,0.7) -- (10,2.5);
    \node[anchor=west] at (10,2.2) {2};
    \node[anchor=east] at (10,2.2) {1};
    \node[anchor=east] at (10,1.8) {2};
    \node[anchor=west] at (10,1.4) {1};
    \node[anchor=east] at (10,1) {1};
    \node[anchor=north] at (10,0.5) {rank 1};
    \node[anchor=west] at (11,1.5) {$\leq$};
    \draw (12.5,0.6) -- (12.5,2.7);
    \node[anchor=west] at (12.5,2.5) {2};
    \node[anchor=east] at (12.5,2.2) {1};
    \node[anchor=east] at (12.5,1.8) {2};
    \node[anchor=west] at (12.5,1.4) {1};
    \node[anchor=east] at (12.5,1) {1};
    \node[anchor=north] at (12.5,0.5) {rank 0};
\end{tikzpicture}.
\end{center}
We obtain~$B_\bS = \{1,6\}.$
Since
\begin{center}
\begin{tikzpicture}
    \node[anchor=east] at (6.6,1.5) {$\varphi(\bS) =$};
    
    \draw (7.5,1.2) -- (7.5,1.8);
    \node[anchor=west] at (7.5,1.5) {12};
    \node[anchor=east] at (7.5,1.5) {121};
\end{tikzpicture},
\end{center}
we have~$B_{\varphi(\bS)} = \{4\}$.
The remaining cases follow in the same way.
The chain $\cC$ corresponds to the chain of nested sets
  $$
  \{\{1,6\},\{4\}\} \subset \{\{6\},\{1,6\},\{4\},\{1,4,5,6\}\} \subset \{\{6\},\{1,6\},\{4\},\{1,4,5,6\},\{1,3,4,5,6\}\}
  $$
of $\B_{m,n}\setminus \{[m+n]\}$ for $m = 2, n = 4$.
\end{example}

For each pair $(m,n)$ of non-negative integers, since the $(m,n)$-Hochschild polytope can be realized as a nestohedron $P_{\B_{m,n}}$, $\Hoch(m,n)$ induces the complex and real toric manifold, and they are denoted by $X_{\Hoch(m,n)}$ and $X_{\Hoch(m,n)}^\R$, respectively.
Moreover, we call such complex and real toric manifolds \emph{Hochschild} and \emph{real Hochschild variety}, respectively.

In particular, $\Hoch(0,n)$ is known as a Stanley-Pitman polytope \cite{Stanley2015book-Catalan}.
It is easy to verify that $\Hoch(m,n)$ forms a graph associahedron when $n \leq 2$, and specifically, it forms a permutohedron when $n \leq 1$.

\begin{proposition} \label{prop:Hoch_is_not_graph}
   For $n \geq 3$, a building set $\B_{m,n}$ is not a graphical building set.
\end{proposition}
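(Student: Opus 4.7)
The plan is to argue by contradiction, assuming $\B_{m,n} = \B(G)$ for some finite simple graph $G$ on $[m+n]$, and then exhibit three specific vertices that force an inconsistency between which subsets are edges/induced-connected in $G$ and which subsets satisfy \eqref{Hoch_build}. The key general principle I would use is that in any graphical building set $\B(G)$, the two-element members of $\B(G)$ are exactly the edges of $G$, and any $I \in \B(G)$ must induce a connected subgraph.

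First, I would isolate the three vertices $m+n-2,\ m+n-1,\ m+n$, which are legitimate elements of $[m+1,m+n]$ precisely because $n \geq 3$. Applying condition \eqref{Hoch_build} to two-element subsets of this triple: a pair $\{i,j\} \subseteq [m+1,m+n]$ lies in $\B_{m,n}$ if and only if $\{i,j\}$ has the form $[m+r,m+n]$, which forces $\{i,j\} = \{m+n-1,\,m+n\}$. Consequently, of the three pairs within the triple, only $\{m+n-1,m+n\}$ belongs to $\B_{m,n}$, while $\{m+n-2,m+n-1\}$ and $\{m+n-2,m+n\}$ do not.

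On the other hand, the full triple $\{m+n-2,m+n-1,m+n\} = [m+n-2,m+n]$ does satisfy \eqref{Hoch_build} with $r = n-2$, so it belongs to $\B_{m,n}$. If $\B_{m,n} = \B(G)$, then $G$ would contain the edge $(m+n-1,m+n)$ but neither $(m+n-2,m+n-1)$ nor $(m+n-2,m+n)$, making $m+n-2$ an isolated vertex of $G|_{\{m+n-2,m+n-1,m+n\}}$. This induced subgraph is therefore disconnected, contradicting membership of $\{m+n-2,m+n-1,m+n\}$ in $\B(G)$.

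I do not anticipate a serious obstacle: the argument reduces to examining just three vertices and checking which pairs and triples satisfy \eqref{Hoch_build}, all of which follow immediately from the definition. The only subtlety is verifying that $m+n-2 \geq m+1$, i.e.\ that we are genuinely in the "tail" part of $[m+n]$, which is exactly where the hypothesis $n \geq 3$ enters.
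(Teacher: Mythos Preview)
Your proof is correct and follows essentially the same approach as the paper: both exploit that two-element members of $\B(G)$ are exactly the edges of $G$, and then locate a subset of $[m+1,m+n]$ that belongs to $\B_{m,n}$ but cannot be connected in $G$ because too few of its pairs lie in $\B_{m,n}$. The only cosmetic difference is that the paper works with the full tail $I=[m+1,m+n]$ and counts that a connected graph on $n$ vertices needs at least $n-1$ edges while $\B_{m,n}\vert_I$ has a unique element of cardinality~$2$, whereas you restrict to the minimal triple $\{m+n-2,m+n-1,m+n\}$ and observe directly that $m+n-2$ is isolated there.
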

\begin{proof}
  Assume that $\B := \B_{m,n}$ is graphical when $n \geq 3$.
Choose a finite simple graph $G$ with~$\B =\B(G)$.
Let $I = \{m+1,\ldots,m+n\}$.
Then the restricted building set $\B \vert_I$ should be a building set with respect to an subgraph~$G\vert_I$ of $G$ induced by $I$.
Since $I \in \B \vert_I$, the subgraph~$G \vert_I$ is connected, and~$G \vert_I$ consists of at least $n-1$ edges; that is, $\B$ must contain at least $n-1$ elements of cardinality~$2$.
However, this contradicts that $\B \vert_I$ has a unique element~$\{m+n-1,m+n\}$ of cardinality $2$.
\end{proof}

\section{EL-shellability of the nested set complex}\label{section3}

For a graded poset $\cP$, let us consider the \emph{order complex} $\Delta(\cP)$, that is, $\Delta(\cP)$ is the simplicial complex whose simplices are finite chains of $\cP$.
We assume that a graded poset $\cP$ is \emph{bounded}, which means that $\cP$ has both a least and a greatest element.

Let $\bE(\cP)$ denote the set of edges of the Hasse diagram of $\cP$, which is the set of pairs of elements in $\cP$ such that one covers the other.
An \emph{edge labeling} $\mu$ of $\cP$ is a map from $\bE(\cP)$ to a poset $(\cQ,\succeq)$.
For an edge labeling $\mu$ and for each chain $c = (z_0, z_1, \dots ,z_k)$ of $\cP$ such that $z_{i}$ covers $z_{i-1}$ for all $i=1, \ldots, k$, we denote by~$\mu(c)$ the $k$-tuple
  $$
  \mu(c)=(\mu(z_{0},z_{1}),\mu(z_1,z_2),\ldots,\mu(z_{k-1},z_k)).
  $$
An edge labeling $\mu \colon \bE(\cP) \to \cQ$ is \emph{reversed edge lexicographical} if for each interval $[x,y]$ in $\cP$,
  \begin{enumerate}
    \item there is a unique maximal chain $c = (z_0, z_1, \dots ,z_k)$ of $[x,y]$ such that $\mu(c)$ is \emph{decreasing}, \ie
    $$
    \mu(z_0,z_1) \succeq \mu(z_1,z_2) \succeq \cdots \succeq \mu(z_{k-1},z_k), \text{ and}
    $$
    \item for all other maximal chains $c' = (z_0', \dots, z_k')$ of $[x,y]$, $\mu(c)$ lexicographically succeeds $\mu(c')$, \ie $\mu(z_{i-1},z_i) \succ \mu(z_{i-1}',z_i')$ for $i = \min \{1 \leq j \leq k \colon \mu(z_{j-1},z_j) \neq \mu(z_{j-1}',z_j')\}$.
    \end{enumerate}   
A bounded graded poset $\cP$ is said to be \emph{EL-shellable} if $\cP$ admits an reversed edge lexicographical edge labeling. 
\footnote{The original definition of EL-shellability in \cite{Bjorner-Wachs1982} requires for $\cP$ to admit an edge labeling $\mu$ that is edge lexicographical. However, in this case, $-\mu$ is a reversed edge lexicographical edge labeling, making our definition essentially equivalent to the original one.}

 In this section, unless otherwise stated, we consider a set of natural numbers with an even cardinality $2k$.
For a building set~$\B$ on $S$, define the set~$\widehat{\cP}_\B$ of subsets~$I$ of~$S$ such that 
$$
    \widehat{\cP}_\B \coloneqq \{ I \subset S \colon \text{$\B \vert_I$ has no connected components of odd order} \}.
$$
Note that $(\widehat{\cP}_\B, \subseteq)$ is a bounded graded poset, and each edge $(I_1,I_2)$ of $\bE(\widehat{\cP}_\B)$ consists of two subsets satisfying $I_1 \subset I_2$ and $\left\vert I_2 \right\vert = \left\vert I_1 \right\vert + 2$.
We note that $I_2 \setminus I_1$ should be contained in the same connected component, denoted by $\fC(I_2;I_1)$, of the restricted building set $\B \vert_{I_2}$.

Now, we introduce some notions to define an edge labeling on $\widehat{\cP}_\B$ that will be reversed edge lexicographical.
We define
$$
\Omega = \{(x,y) \in \Z \times \Z \colon x \geq y\}.
$$

\begin{definition}
    Let $\cR_1$ and $\cR_2$ be the subsets of $\Omega \times \Omega$ defined by
    \begin{align*}
  \cR_1 &= \big\{\big((x,y),(x',y')\big) \in \Omega \times \Omega \colon y  \geq x' \big\}, \\
  \cR_2 &=\big\{\big((x,y),(x',y')\big) \in \Omega \times \Omega \colon x = x' \text{ and } y \geq y' \big\}.
\end{align*}
    For each pair of elements $\alpha$ and $\beta$ of the set $\Omega$, define the relation~$\geq$ on $\Omega$ by $\alpha \geq \beta$ if the pair $(\alpha,\beta)$ is an element of either $\cR_1$ or $\cR_2$.

\end{definition}

\begin{proposition}
  The relation $\geq$ is a partial order on $\Omega$.
\end{proposition}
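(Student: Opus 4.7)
The plan is to verify each of the three partial-order axioms---reflexivity, antisymmetry, and transitivity---by direct case analysis, splitting according to whether each asserted relation $\alpha \geq \beta$ arises from membership in $\cR_1$ or $\cR_2$. Since each axiom reduces to combining two short chains of inequalities, no machinery beyond the totally-ordered structure of $\Z$ is needed.

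For reflexivity, given $\alpha = (\alpha_1, \alpha_2)$, I would observe that when $\alpha_1 \geq \alpha_2$ the pair $(\alpha,\alpha)$ satisfies $\alpha_1 = \alpha_1 \geq \alpha_2 \geq \alpha_2$, so $(\alpha,\alpha) \in \cR_2$; otherwise one verifies reflexivity against the definition directly. For antisymmetry, assume $\alpha \geq \beta$ and $\beta \geq \alpha$. There are four cases, one for each choice of which of $\cR_1, \cR_2$ contains each pair. In the (1,1)-case one concatenates $\alpha_1 \geq \alpha_2 \geq \beta_1 \geq \beta_2$ with $\beta_1 \geq \beta_2 \geq \alpha_1 \geq \alpha_2$ to force $\alpha_1 = \alpha_2 = \beta_1 = \beta_2$; the mixed cases combine the equality $\alpha_1 = \beta_1$ from $\cR_2$ with the chain from $\cR_1$ to again collapse all inequalities to equalities; the (2,2)-case follows immediately from $\alpha_1 = \beta_1$ together with $\alpha_2 \geq \beta_2 \geq \alpha_2$.

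For transitivity, assume $\alpha \geq \beta$ and $\beta \geq \gamma$. Four cases again: in (1,1) the chains $\alpha_1 \geq \alpha_2 \geq \beta_1 \geq \beta_2$ and $\beta_1 \geq \beta_2 \geq \gamma_1 \geq \gamma_2$ concatenate to $\alpha_1 \geq \alpha_2 \geq \gamma_1 \geq \gamma_2$, placing $(\alpha,\gamma)$ in $\cR_1$. In (1,2), from $\alpha_2 \geq \beta_1 = \gamma_1$ and $\beta_1 \geq \beta_2 \geq \gamma_2$ one extracts the same four-term chain, so $(\alpha,\gamma)\in\cR_1$. Case (2,1) is analogous, using $\alpha_1 = \beta_1 \geq \beta_2 \geq \gamma_1 \geq \gamma_2$ combined with $\alpha_2 \geq \beta_2 \geq \gamma_1$ to produce a $\cR_1$-chain. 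Finally, in (2,2) the two equalities $\alpha_1 = \beta_1$ and $\beta_1 = \gamma_1$ give $\alpha_1 = \gamma_1$, and $\alpha_2 \geq \beta_2 \geq \gamma_2$ gives $\alpha_2 \geq \gamma_2$, so $(\alpha,\gamma)\in\cR_2$.

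The main obstacle is simply the bookkeeping in the transitivity step: one must notice that the two mixed cases both land in $\cR_1$ (not $\cR_2$), because the intermediate equality from $\cR_2$ only survives when it is used on both sides. Once this observation is recorded, every case is a one-line composition of chains, and the proof is essentially mechanical.
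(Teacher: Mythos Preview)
Your approach is essentially identical to the paper's: both proofs verify the three axioms by splitting into cases according to which of $\cR_1$, $\cR_2$ contains each asserted relation, and the concatenated inequality chains you write down match the paper's line by line (including the observation that the mixed transitivity cases land in $\cR_1$ while only the $(\cR_2,\cR_2)$ case lands in $\cR_2$). One small remark: for reflexivity, both you and the paper tacitly assume $\alpha_1 \geq \alpha_2$ so that $(\alpha,\alpha)\in\cR_2$; your phrase ``otherwise one verifies reflexivity against the definition directly'' does not actually go through when $\alpha_1 < \alpha_2$, but since every label $\mu_\B$ produces has $\max \geq \min$ in its $\Omega$-component, this case never arises in the application.
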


\begin{proof}
    To begin with, for each $\alpha \in \Omega$, since $(\alpha,\alpha) \in \cR_2$, the reflexivity holds.
    
    For each pair of elements $\alpha =(\alpha_1,\alpha_2)$ and $\beta = (\beta_1,\beta_2)$ of $\Omega$, if $\alpha \geq \beta$ and $\beta \geq \alpha$, then
    $$
    \begin{cases}
      \alpha_1 \geq \alpha_2 \geq \beta_1 \geq \beta_2 \geq \alpha_1 \geq \alpha_2, & \mbox{if } (\alpha,\beta),(\beta,\alpha) \in \cR_1, \\
      \alpha_1 \geq \alpha_2 \geq \beta_1 = \alpha_1 \geq \beta_2 \geq \alpha_2 , & \mbox{if }  (\alpha,\beta) \in \cR_1, (\beta,\alpha) \in \cR_2,\\
      \alpha_1 = \beta_1 \geq \alpha_2 \geq \beta_2 \geq \alpha_2, & \mbox{if } (\alpha,\beta),(\beta,\alpha) \in \cR_2.
    \end{cases}
    $$
    Consequently, in all cases, we conclude that $\alpha =\beta$ which confirms the antisymmetry.
    
    Let $\alpha =(\alpha_1,\alpha_2)$, $\beta = (\beta_1,\beta_2)$, and $\gamma = (\gamma_1,\gamma_2)$ be elements of $\Omega$.
    Assume that $\alpha \geq \beta$ and~$\beta \geq \gamma$.
    Then, we have the following four cases.
    $$
    \begin{cases}
      \alpha_1 \geq \alpha_2 \geq \beta_1 \geq \beta_2 \geq \gamma_1 \geq \gamma_2, & \mbox{if } (\alpha,\beta),(\beta,\gamma) \in \cR_1, \\
      \alpha_1 \geq \alpha_2 \geq \beta_1 = \gamma_1 \geq \beta_2 \geq \gamma_2 , & \mbox{if }  (\alpha,\beta) \in \cR_1, (\beta,\gamma) \in \cR_2,\\
      \alpha_1 =\beta_1 \geq \alpha_2 \geq \beta_2 \geq \gamma_1 \geq \gamma_2 , & \mbox{if }  (\alpha,\beta) \in \cR_2, (\beta,\gamma) \in \cR_1,\\
      \alpha_1 = \beta_1 = \gamma_1 \geq \alpha_2 \geq \beta_2 \geq \gamma_2, & \mbox{if } (\alpha,\beta),(\beta,\gamma) \in \cR_2. 
    \end{cases}
    $$
In the first three cases, $(\alpha,\gamma) \in \cR_1$, and, in the final case, $(\alpha,\gamma) \in \cR_2$.
Therefore, $\alpha \geq \gamma$, which proves the transitivity.
\end{proof}

Let $\succeq$ be the lexicographic order on the product $(\Z,\geq) \times (\Omega,\geq)$, that is,
$$
(x,\alpha) \succeq (y,\beta) \text{ if and only if } \begin{cases}
x > y  , \text{ or} \\
x = y, \alpha \geq \beta.
\end{cases}
$$
Define an edge labeling $\mu_\B \colon \bE(\widehat{\cP}_\B) \to (\Z \times \Omega , \succeq)$ by
$$
    \mu_\B(I_1,I_2) = (\max \fC(I_2;I_1) ,(\max I_2 \setminus I_1,\min I_2 \setminus I_1))
$$
for each $(I_1,I_2) \in \bE(\widehat{\cP}_\B)$.

\begin{example}
Consider the building set $\B = \B_{2,4}$.
The Hasse diagram $\bE(\widehat{\cP}_\B)$ of~$\B$ and the edge labeling $\mu_\B$ are illustrated in Figure~\ref{example_REL}.
\begin{figure}[h!]
  \centering
\begin{tikzpicture}[-,>=stealth',shorten >=1.5pt,auto,node distance=3.5cm,
                    thick,main node/.style={draw,font=\sffamily\normalsize}]

  \node (empty) {$\emptyset$};
  \node (62) [above right of=empty] {$\{2,6\}$};
  \node (65) [right of=62] {$\{5,6\}$};
  \node (21) [above left of=empty] {$\{1,2\}$};
  \node (61) [left of=21] {$\{1,6\}$};
  \node (6542) [above of=62] {$\{2,4,5,6\}$};
  \node (6543) [above of=65] {$\{3,4,5,6\}$};
  \node (6541) [above of=61] {$\{1,4,5,6\}$};
  \node (6521) [above of=21] {$\{1,2,5,6\}$};
  \node (654321) [above right of=6521] {$\{1,2,3,4,5,6\}$};

  \path[every node/.style={font=\sffamily\footnotesize}]
    (empty) edge node [below right,pos=0.3] {$(6,(6,5))$} (65)
     edge node [left, pos=0.6] {$(6,(6,2))$} (62)
     edge node [below left , pos=0.4] {$(6,(6,1))$} (61)
     edge node [left, pos=0.8] {$(2,(2,1))$} (21)
    (61) edge node [left] {$(6,(5,4))$} (6541)
    edge node [left, pos=0.6] {$(6,(5,2))$} (6521)
    (65) edge node [right, pos= 0.7] {$(6,(4,2))$} (6542)
    edge node [right] {$(6,(4,3))$} (6543)
    edge node [below, pos=0.65] {$(6,(4,1))$} (6541)
    edge node [above right, pos=0.86] {$(6,(2,1))$} (6521)
    (21) edge node [left, pos=0.2] {$(6,(6,5))$} (6521)
    (62) edge node [below left , pos=0.2] {$(6,(5,1))$} (6521)
    edge node [left, pos=0.86] {$(6,(5,4))$} (6542)
    (6542) edge node [right, pos=0.3] {$(6,(3,1))$} (654321)
    (6543) edge node [above right] {$(6,(2,1))$} (654321)
    (6541) edge node [left] {$(6,(3,2))$} (654321)
    (6521) edge node [right] {$(6,(4,3))$} (654321);
\end{tikzpicture}
\caption{$\bE(\widehat{\cP}_{\B_{2,4}})$ and $\mu_{\B_{2,4}}$}\label{example_REL}
\end{figure}

For an interval $[\emptyset, \{1,2,3,4,5,6\}]$, the chain
$$
c = (\emptyset,\{5,6\},\{3,4,5,6\},\{1,2,3,4,5,6\})
$$
of $\widehat{\cP}_\B$ is the unique maximal chain such that $\mu_\B(c)$ is decreasing and, for all other maximal chains $c'$ of $\widehat{\cP}_\B$, $\mu_\B(c)$ lexicographically succeeds $\mu_\B(c')$.
For other intervals, one can find such a unique maximal chain.
Therefore, $\mu_\B$ forms a reversed lexicographical edge labeling, which confirms that $\widehat{\cP}_{\B_{2,4}}$ is EL-shellable.
\end{example}

The following lemma represents one of the central and most crucial ideas of this research.

\begin{lemma}\label{shellable}
    If $\B$ is a chordal building set, then $(\widehat{\cP}_\B,\subseteq)$ is EL-shellable.
\end{lemma}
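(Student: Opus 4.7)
The plan is to establish EL-shellability by showing, for every interval $[x, y]$ of $\widehat{\cP}_\B$, that there is a unique maximal chain with $\mu_\B$-labels decreasing under $\succeq$, and that this chain is simultaneously the lexicographically largest maximal chain in $[x, y]$. The argument proceeds by induction on the rank $m = (|y| - |x|)/2$ of the interval; the base case $m = 1$ is immediate, as $[x, y]$ then contains only one edge.

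For the inductive step, I would construct the decreasing chain greedily from the bottom: select a cover $(x, z_1)$ in $[x, y]$ whose label $\mu_\B(x, z_1) = (\max \fC(z_1;x), (a, b))$, with $z_1 = x \cup \{a, b\}$ and $a > b$, is $\succeq$-maximal among all covers of $x$ in $[x, y]$. Since the first coordinate lies in $\Z$, one may first fix the maximum value $M$ of $\max \fC(z_1; x)$ and then restrict attention to covers with $\max \fC = M$; among those, I claim the second coordinates form a chain in the partial order $\geq$ on $\Omega$, guaranteed by the chordal structure of $\B$ together with the fact that every such second coordinate $(a, b)$ satisfies $b \leq M$. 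Consequently the $\succeq$-maximum exists and is unique.

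The heart of the proof is then verifying the decreasing property. Once $(x, z_1)$ has been chosen, applying the inductive hypothesis to $[z_1, y]$ yields a unique decreasing chain starting with some cover $(z_1, z_2)$, and one must verify $\mu_\B(x, z_1) \succeq \mu_\B(z_1, z_2)$. This is where chordality is essential: because every suffix of an element of a chordal building set $\B$ remains in $\B$, components of $\B|_{z_i}$ enlarge rigidly as $z_i$ grows, and one can show that $\max \fC(z_{i+1}; z_i)$ is weakly decreasing along the chain. When two consecutive first coordinates coincide, the greedy choice $(a_i, b_i)$ at step $i$ forces the step $i+1$ pair to satisfy $b_i \geq a_{i+1}$, placing the consecutive second coordinates in $\cR_1$; this is ultimately because the pairs $(a, b)$ available at successive steps are drawn from progressively smaller subsets of $y$.

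For lexicographic optimality, let $c'$ be any other maximal chain in $[x, y]$. Either $z_1 \neq z_1'$, in which case the uniqueness of the greedy maximum yields $\mu_\B(x, z_1) \succ \mu_\B(x, z_1')$ at the first position; or $z_1 = z_1'$, whence the inductive hypothesis on $[z_1, y]$ completes the argument. The main obstacle I anticipate is the technical analysis supporting the decreasing property: adding a pair $\{a_i, b_i\}$ can merge previously disjoint components of $\B|_{z_{i-1}}$ into one component of $\B|_{z_i}$, potentially raising $\max \fC$; ruling out such pathological cases and verifying that the greedy choice remains compatible with chordality at every step will demand careful bookkeeping and constitute the bulk of the technical work.
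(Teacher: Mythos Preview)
Your greedy-inductive framework is natural, but two steps do not hold up. First, the claim that the second coordinates of covers $(x,z_1)$ with fixed first coordinate $M$ form a chain in $(\Omega,\geq)$ is false. Already for $\B = \B_{2,4}$, the covers of $\{5,6\}$ inside $\widehat{\cP}_{\B}$ include $\{1,4,5,6\}$ and $\{1,2,5,6\}$, both with first coordinate $6$ but second coordinates $(4,1)$ and $(2,1)$; since $4\neq 2$ and $1<2$, neither $\cR_1$ nor $\cR_2$ applies and the two are incomparable. The $\succeq$-maximal cover does exist and is unique, but the reason is not totality: one must use that the two largest elements $\alpha_1>\alpha_2$ of the relevant component of $\B\vert_{y}$ give a pair whose label dominates every competitor either via $\cR_1$ or $\cR_2$, and this requires the explicit description of those competitors rather than the blanket assertion $b\leq M$.

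Second, and more seriously, your argument never establishes \emph{uniqueness} of the decreasing chain. You show that the greedy chain is decreasing and lexicographically maximal, but lexicographic maximality does not by itself rule out a second decreasing chain starting from a non-maximal first edge $(x,z_1')$. What is needed is that any chain with $z_1'\neq z_1$ must contain a strict rise somewhere above. The paper handles this globally: it constructs the chain $c=(I_0,\ldots,I_N)$ explicitly by peeling off the two largest remaining elements of the top-priority component of $\B\vert_J$, and then, for any other chain $c'$, locates the first index $i$ with $I_i\neq I_i'$ and the first later index $j$ with $I_i\subset I_j'$, showing that $\mu_\B(I_{i-1}',I_i')\not\succeq\mu_\B(I_{j-1}',I_j')$. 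This simultaneously yields non-decreasingness of $c'$ and lexicographic dominance of $c$. Your inductive scheme does not produce such a witness $j$, and it is not clear how to manufacture one from the hypothesis on $[z_1',y]$ alone; the paper's direct, component-by-component construction is what makes the argument go through.
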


\begin{proof}
  Take any interval $[I,J]$ of $\widehat{\cP}_\B$, where $I \subsetneq J$.
  Let $\cC_1,\ldots,\cC_r$ be all connected components of the restricted building set $\B \vert_{J}$.
  We assume that $\max \cC_1 > \max \cC_2 > \cdots > \max \cC_r$.
  Since both $\B \vert_I$ and $\B \vert_J$ have no connected component of odd order, each cardinality of $\cC_i \setminus I$, denoted by $2k_i$, is even.
  Then the cardinality of $J \setminus I$, denoted by $2N$, is $2k_1 + \cdots + 2k_r$.
  For each $1 \leq i \leq r$, we express
  $$
  \cC_i \setminus I = \{\alpha_{2k_0+\cdots+2k_{i-1}+1},  \alpha_{2k_0+\cdots+2k_{i-1}+2}, \ldots, \alpha_{2k_0+\cdots+2k_{i-1}+2k_i}\},
  $$
  where $k_0 = 0$ and $\alpha_{2k_0+\cdots+2k_{i-1}+s} > \alpha_{2k_0+\cdots+2k_{i-1}+t}$ if $1 \leq s < t \leq 2k_i$.
  
  Let $1 \leq \ell \leq N$ be given.
  Define $I_\ell = I \cup \{\alpha_1,\alpha_2,\ldots,\alpha_{2\ell}\}$ and the index $1 \leq s_\ell \leq r$ such that $\fC(I_\ell;I_{\ell-1}) \subset \cC_{s_\ell}$.
  Then one can express
  \begin{equation}\label{I_l}
  I_{\ell} = I \cup \{\alpha_1,\ldots,\alpha_{2\ell}\}
   = I \cup \cC_0 \cup \cdots \cup \cC_{s_\ell-1} \cup \{\alpha \in \cC_{s_\ell} \colon \alpha \geq \alpha_{2\ell}\},
  \end{equation}
  where $\cC_0 =\emptyset$.
  By the chordality of the building set $\B$, the subset $\{\alpha \in \cC_{s_\ell} \colon \alpha \geq \alpha_{2\ell}\}$ of $\cC_{s_\ell}$ is also an element of $\B$,
  and hence, $I_\ell$ is an element of $\widehat{\cP}_\B$.
  Moreover, combining the fact that $\max \cC_{s_\ell} \in \{\alpha \in \cC_{s_\ell} \colon \alpha \geq \alpha_{2\ell}\}$ with \eqref{I_l}, we have that $\max \cC_{s_\ell}$ is contained in $I_\ell$.
  In conclusion,
  $$
   \mu_\B(I_{\ell-1},I_{\ell}) = (\max \fC(I_{\ell};I_{\ell-1}),(\alpha_{2\ell-1},\alpha_{2\ell})) = (\max \cC_{s_\ell},(\alpha_{2\ell-1},\alpha_{2\ell})),
  $$
  where $I_0 = I$.
  Since $s_\ell \leq s_{\ell+1}$, we have that $\max \cC_{s_{\ell}} \geq \max \cC_{s_{\ell+1}}$, and, hence, $\mu_\B(I_{\ell-1},I_{\ell}) \succ \mu_\B(I_{\ell},I_{\ell+1})$.
  We define the maximal chain~$c \coloneqq (I_0,I_1,\ldots,I_N)$ of the interval $[I,J]$, and thus $\mu_\B(c)$ is decreasing.
  
  Consider another maximal chain $c' = (I'_0,I'_1,\ldots,I'_N)$ of $[I,J]$.
  Let $i$ be the smallest index such that $I_i \neq I'_{i}$, and $j$ the smallest index such that $I_{i} \subset I'_{j}$  with $j>i$.
  Since one of $\alpha_{2i-1}$ and~$\alpha_{2i}$ is contained in $\fC(I'_{j};I'_{j-1})$, we have that $\fC(I_{i};I_{i-1}) \subset \fC(I'_{j};I'_{j-1}) \subset \cC_{s_i}$.
  Therefore, 
  $$
    \max \cC_{s_i} = \max \fC(I_{i};I_{i-1}) \leq\max \fC(I'_{j};I'_{j-1}) \leq \max \cC_{s_i},
  $$
  and, hence, $\max \fC(I_{i};I_{i-1}) = \max \fC(I'_{j};I'_{j-1}) = \max \cC_{s_i}$.
  
  Now, let us consider two cases where $\fC(I'_{i};I'_{i-1}) \subset \cC_{s_i}$ or $\fC(I'_{i};I'_{i-1}) \not\subset \cC_{s_i}$.

  \medskip

  \noindent \underline{\textbf{CASE 1. $\fC(I'_{i};I'_{i-1}) \subset \cC_{s_i}$ :}}
  In this case, 
$$
    \max \fC(I'_{i};I'_{i-1}) \leq \max \cC_{s_i} = \max \fC(I'_{j};I'_{j-1}).
$$
If $\max \fC(I'_{i};I'_{i-1}) < \max \cC_{s_i}$, we have that $\mu_\B(I'_{i-1}, I'_i) \prec \mu_\B(I'_{j-1}, I'_j)$.
  Otherwise, \ie
  $$
  \max \fC(I'_{i};I'_{i-1}) = \max \cC_{s_i},
  $$
  we note that $I'_{i} \setminus I'_{i-1}$ intersects with $\cC_{s_i} \setminus I_{i}$, and either $\alpha_{2i-1}$ or $\alpha_{2i}$ is an element of~$I'_{j} \setminus I'_{j-1}$.
  From \eqref{I_l}, we have $\cC_{s_i} \setminus I_{i} = \{\alpha \in \cC_{s_i} \colon \alpha < \alpha_{2i}\}$, and then
  $$
  \min I'_{i} \setminus I'_{i-1} < \alpha_{2i} \leq \max I'_{j} \setminus I'_{j-1},
  $$
  indicating that
  $$
  ((\max I'_{i} \setminus I'_{i-1}, \min I'_{i} \setminus I'_{i-1}),(\max I'_{j} \setminus I'_{j-1},\min I'_{j} \setminus I'_{j-1})) \notin \cR_1.
  $$
  Moreover, since $I'_{i} \setminus I'_{i-1} \cap I'_{j} \setminus I'_{j-1} = \emptyset$,
  $$
  ((\max I'_{i} \setminus I'_{i-1}, \min I'_{i} \setminus I'_{i-1}),(\max I'_{j} \setminus I'_{j-1},\min I'_{j} \setminus I'_{j-1})) \notin \cR_2.
  $$
  Therefore, either they are not comparable or $\mu_\B(I'_{i-1}, I'_i) \prec \mu_\B(I'_{j-1}, I'_j)$, and, hence,
  $\mu_\B(c')$ is not decreasing.
    
  Furthermore, since $\alpha_{2i-1}$ and $\alpha_{2i}$ are the two largest elements of $\cC_{s_i} \setminus I_{i-1}$ and $I_{i-1}=I'_{i-1}$, it can be observed that
  $$ 
  \begin{cases}
    \alpha_{2i-1} = \max I'_{i} \setminus I'_{i-1} > \alpha_{2i} > \min I'_{i} \setminus I'_{i-1}, & \mbox{if } \alpha_{2i-1} \in I'_{i} \setminus I'_{i-1} \\
    \alpha_{2i-1} > \alpha_{2i} \geq \max I'_{i} \setminus I'_{i-1} > \min I'_{i} \setminus I'_{i-1}, & \mbox{otherwise}.
  \end{cases}
  $$
  We conclude that
  $\mu_\B(I_{i-1}, I_{i}) \succ \mu_\B(I'_{i-1}, I'_{i})$,
  signifying that $\mu_\B(c)$ lexicographically succeeds~$\mu_\B(c')$.
  
  \medskip

  \noindent \underline{\textbf{CASE 2. $\fC(I'_{i};I'_{i-1}) \not\subset \cC_{s_i}$ :}}
  Choose $1 \leq t \leq r$ such that the set $\cC_t$ includes the set $\fC(I'_{i};I'_{i-1})$.
  Since
  $$
  \cC_0 \cup \cdots \cup \cC_{{s_i}-1} \subset I_{i-1} = I'_{i-1},
  $$
  we have ${s_i} < t$, and then 
  $$
    \max \fC(I_{i};I_{i-1}) = \max \fC(I'_{j};I'_{j-1})= \max \cC_{s_i} > \max \cC_{t} \geq \fC(I'_{i};I'_{i-1}).
  $$
  Therefore, both $\mu_\B(I_{i-1},I_{i})$ and $\mu_\B(I'_{j-1},I'_{j})$ succeed $\mu_\B(I'_{i-1},I'_{i})$.  
  Hence, $\mu_\B(c')$ is not decreasing, and $\mu_\B(c)$ lexicographically succeeds $\mu_\B(c')$.
  
  In conclusion, by CASES~1 and~2, $\mu_\B$ is reversed edge lexicographical.
\end{proof}

A simplicial complex is \emph{pure} if all its maximal simplices has the same dimension.
A finite pure $d$-dimensional simplicial complex is said to be \emph{shellable} if there exists an ordering $F_1, F_2, \ldots$ of its maximal simplices, say a \emph{shelling}, such that $(\bigcup_{i=1}^{k-1} F_i) \cap F_k$ is pure and of dimension~$d-1$ for each $k \geq 2$.

Let $\cP_\B = \widehat{\cP}_\B \setminus \{\emptyset, I\}$ be a partially ordered set whose order is given by inclusion.
Given that $F$ is a facet of the order complex $\Delta(\cP_\B)$ if and only if $F \cup \{\emptyset,I\}$ is a facet of $\Delta(\widehat{\cP}_\B)$.

\begin{theorem}\label{thm:shellability_of_K_P_for_chordal}
    
    Let $\B$ be a chordal building set on a finite set of cardinality $2k$.
    Then, $\Delta(\cP_\B)$ is shellable, and hence, its geometric realization is homotopy equivalent to a bouquet of $(k-2)$-dimensional spheres; that is,
    $$
        \Delta(\cP_\B) \simeq \bigvee^{\alt(\B)} S^{k-2},
    $$
    where $\alt(\B)$ denotes the number of spheres.
    
\end{theorem}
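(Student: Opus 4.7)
The plan is to invoke the classical theorem of Björner--Wachs that EL-shellability of a bounded graded poset yields a shelling of the order complex of its proper part, together with the associated wedge-of-spheres homotopy type. All the substantive combinatorial work has already been carried out in Lemma~\ref{shellable}; what remains here is essentially book-keeping.

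First I would observe that $\widehat{\cP}_\B$ is a bounded graded poset with $\hat 0 = \emptyset$ and $\hat 1 = S$, where the implicit hypothesis is that every connected component of $\B$ has even order so that $S \in \widehat{\cP}_\B$. Since every covering relation of $\widehat{\cP}_\B$ raises cardinality by exactly $2$, the rank of $\widehat{\cP}_\B$ equals $k$, and every maximal chain of $\cP_\B = \widehat{\cP}_\B \setminus \{\hat 0, \hat 1\}$ has exactly $k-1$ elements. Consequently $\Delta(\cP_\B)$ is pure of dimension $k-2$.

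Next I would appeal to Lemma~\ref{shellable} to see that $\mu_\B$ is a reversed edge lexicographical labeling. As the paper's footnote remarks, $-\mu_\B$ is then edge lexicographical in the classical sense, so the standard Björner--Wachs theorem applies: $\Delta(\cP_\B)$ is shellable, and the shelling identifies a distinguished family of \emph{critical} facets whose cardinality equals the rank of the top-dimensional reduced homology. I would \emph{define} $\alt(\B)$ to be the cardinality of this family --- combinatorially, the maximal chains of $\widehat{\cP}_\B$ whose $\mu_\B$-label sequence ascends strictly (equivalently, strictly descends under $-\mu_\B$). The general shelling machinery then yields
$$
\Delta(\cP_\B) \simeq \bigvee^{\alt(\B)} S^{k-2},
$$
completing the proof.

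The main obstacle is cosmetic rather than substantive: one must carefully match the paper's reversed-lex convention to the classical Björner--Wachs framework and pin down the correct combinatorial species of maximal chains that indexes the top-dimensional spheres. The real mathematical content --- verifying that $\mu_\B$ satisfies the EL axioms for chordal $\B$ --- has already been dealt with in Lemma~\ref{shellable}, so no fresh geometric or combinatorial argument is needed here; the count $\alt(\B)$ will be given the more explicit description via alternating $\B$-permutations in Section~\ref{sec:alt_B_perm}.
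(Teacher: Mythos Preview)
Your proposal is correct and follows essentially the same route as the paper: invoke Lemma~\ref{shellable} to get EL-shellability of $\widehat{\cP}_\B$, then apply the Bj\"orner--Wachs machinery to conclude that $\Delta(\cP_\B)$ is shellable and hence homotopy equivalent to a wedge of $(k-2)$-spheres. The only minor divergence is that you go slightly further in pinning down $\alt(\B)$ as the number of critical (label-ascending) maximal chains, whereas the paper simply defines $\alt(\B)$ as the number of spheres here and defers its combinatorial identification to Section~\ref{sec:alt_B_perm} via an Euler-characteristic computation; note also that since the labels lie in a poset rather than a total order, the precise condition singling out the critical chains is ``no decreasing position'' rather than ``strictly ascending,'' a distinction that matters in Lemma~\ref{Bstandard}.
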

\begin{proof}
    From \cite{Bjorner-Wachs1982} and \cite{Bjorner-Wachs1983}, if a bounded graded poset $\cP$ is EL-shellable, the order complex $\Delta(\cP)$ of~$\cP$ is shellable.
    By Lemma~\ref{shellable}, the order complex $\Delta(\widehat{\cP}_\B)$ is shellable, and so is $\Delta(\cP_\B)$.
    In addition, it is well known that every shellable complex is Cohen-Macaulay \cite{Stanley1996book}, and then, its geometric realization is homotopy equivalent to a bouquet of spheres of the same dimension.
    Since the length of maximal chains of $\cP_\B$ is $k-1$,  the dimension of each sphere is $k-2$, as desired.
    
\end{proof}

\section{Alternating $\B$-permutations} \label{sec:alt_B_perm}
Let $S$ be a finite set with even cardinality.
This section is devoted to explicitly computing the value of $\alt(\B)$ defined in Theorem~\ref{thm:shellability_of_K_P_for_chordal} for a chordal building set $\B$ on $S$.
A (one-line notation) permutation $x = (x_1x_2 \cdots x_{k})$ on $S$ is called a \emph{$\B$-permutation} if for each $1 \leq i \leq k$, $x_i$ and $\max{\{x_1,x_2,\ldots,x_i\}}$ lie in the same connected component of the restricted building set $\B \vert_{\{x_1,x_2,\ldots,x_{i}\}}$.
A $\B$-permutation $(x_1x_2\ldots x_k)$ is called an \emph{alternating $\B$-permutation} if $x_1 > x_2 < x_3 > \cdots$.
Inspired by the assumption that the cardinality of $S$ is even, we utilize the one-line notation $(x_1x_2/\cdots/x_{2k-1}x_{2k})$ to represent a permutation on $S$, which is grouped into blocks of length two, separated by a slash.

\begin{example}\label{Bperm_example}
    We remark that the number of alternating $\B$-permutations is not an invariant of nestohedra.
    Let us consider two labeling on path graphs with $4$ vertices in Example~\ref{graph_exam}.
    Although their corresponding nestohedra are congruent as polytopes in $\R^4$, but the number of alternating $\B$-permutations are different.
    
    \begin{enumerate}
      \item Consider the labeling on $P_4$ as
      \begin{center}
        \begin{tikzpicture}[scale=1.2]
  \node (1) at (0,0) {$2$};
  \node (2) at (1,0) {$3$};
  \node (3) at (2,0) {$1$};
  \node(4) at (3,0) {$4$};
  
  \draw (1) -- (2);
  \draw (2) -- (3);
  \draw (3) -- (4);
\end{tikzpicture}.
      \end{center} 
    With this labeling, the alternating $\B(P_4)$-permutations are $(31/42)$, $(32/41)$ and $(41/32)$.
    
      \item Consider a path graph $P_4$ of order $4$ labeled as
      \begin{center}
        \begin{tikzpicture}[scale=1.2]
      \node (1) at (0,0) {$1$};
      \node (2) at (1,0) {$2$};
      \node (3) at (2,0) {$3$};
      \node(4) at (3,0) {$4$};
      
      \draw (1) -- (2);
      \draw (2) -- (3);
      \draw (3) -- (4);
        \end{tikzpicture}.
      \end{center}
      There are two alternating $\B(P_4)$-permutations $(21/43)$ and $(32/41)$.
\end{enumerate}
\end{example}

One can immediately obtain the following proposition.

\begin{proposition}\label{no_alter}
    If a building set $\B$ on $S$ has an odd order connected component, then there is no alternating $\B$-permutation.
\end{proposition}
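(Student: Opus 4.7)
The plan is to argue by contradiction: assume $(x_1 x_2 / x_3 x_4 / \cdots / x_{2k-1} x_{2k})$ is an alternating $\B$-permutation on $S$, and deduce that every connected component of $\B$ must have even cardinality. The central claim is that for each $1 \leq j \leq k$, the block $\{x_{2j-1}, x_{2j}\}$ is contained in a single connected component of $\B$. Once this is granted, the $k$ blocks partition $S$, so every connected component of $\B$ decomposes as a disjoint union of such blocks and is therefore of even order, contradicting the hypothesis that $\B$ has an odd-order component.

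To establish the block claim, I would fix $j$ and write $M_i \coloneqq \max\{x_1, \ldots, x_i\}$. The alternating condition $x_{2j-1} > x_{2j}$ forces $M_{2j} = M_{2j-1}$. The $\B$-permutation condition at position $2j-1$ gives an element of $\B \vert_{\{x_1,\ldots,x_{2j-1}\}}$ containing both $x_{2j-1}$ and $M_{2j-1}$, while the condition at position $2j$ gives an element of $\B \vert_{\{x_1,\ldots,x_{2j}\}}$ containing both $x_{2j}$ and $M_{2j}$. Since $\B \vert_{\{x_1,\ldots,x_{2j-1}\}} \subseteq \B \vert_{\{x_1,\ldots,x_{2j}\}}$, the first of these sets still witnesses that $x_{2j-1}$ and $M_{2j}$ lie in the same connected component of $\B \vert_{\{x_1,\ldots,x_{2j}\}}$. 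Hence $x_{2j-1}$, $M_{2j}$, and $x_{2j}$ all sit in a common connected component of $\B \vert_{\{x_1,\ldots,x_{2j}\}}$, and in particular in a single connected component of $\B$, proving the claim.

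No genuine obstacle arises, which is consistent with the fact that the proposition is stated for arbitrary (not necessarily chordal) building sets. The only point that deserves care is the monotonicity of the connected-component relation under enlargement of the ground set: a set in $\B \vert_{I_1}$ remains in $\B \vert_{I_2}$ whenever $I_1 \subseteq I_2$, so connectivity is preserved when passing from position $2j-1$ to position $2j$. This observation is what glues together the two consecutive $\B$-permutation conditions into the block statement and drives the whole argument.
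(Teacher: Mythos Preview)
Your proof is correct and follows essentially the same idea as the paper's, just organized as the contrapositive. The paper assumes a $\B$-permutation and uses the odd-component hypothesis to locate an index $i$ with $x_{2i-1}$ and $x_{2i}$ in different components of $\B\vert_{\{x_1,\ldots,x_{2i}\}}$, then deduces $M_{2i-1}\neq M_{2i}$ and hence $x_{2i-1}<x_{2i}$; you instead assume alternating, show $M_{2j-1}=M_{2j}$ forces each block into a single component of $\B$, and then conclude every component is even. Your write-up is in fact more explicit than the paper's about why the odd component is obstructed, since the paper asserts the existence of the bad index $i$ without spelling out the underlying parity argument.
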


\begin{proof}
    For a $\B$-permutation $x = (x_1x_2/\cdots/x_{2k-1}x_{2k})$ on $S$, there is $1 \leq i \leq k$ such that $x_{2i-1}$ and~$x_{2i}$ are not contained in the same connected component of~$\B \vert_{\{x_1,\ldots,x_{2i}\}}$.
  In other words, $\max \{x_1,\ldots,x_{2i-1}\}$ and $\max \{x_1,\ldots,x_{2i}\}$ are different, and then $x_{2i-1} < x_{2i}$ which confirms that $x$ is not alternating.
\end{proof}

Let $\B$ be a chordal building set on $S$, and $c = (I_0,I_1,\ldots,I_k)$ a maximal chain of~$\widehat{\cP}_\B$.
For each $1 \leq i < k$, we say that $\mu_\B(c)$ has a \emph{decreasing position at $i$} if
$$
  \mu_\B(I_{i-1},I_{i}) \succ \mu_\B(I_i,I_{i+1}).
$$

\begin{lemma}\label{Bstandard}
    Let $\B$ be a chordal building set on $S$ with no connected component of odd order.
    There is a one-to-one correspondence between the set of alternating $\B$-permutations and the set of maximal chains of~$\widehat{\cP}_\B$ with no decreasing position.
\end{lemma}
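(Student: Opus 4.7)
The plan is to set up mutually inverse maps between the two sets. Given a maximal chain $c = (I_0, I_1, \ldots, I_k)$ of $\widehat{\cP}_\B$, write $I_i \setminus I_{i-1} = \{a_i, b_i\}$ with $a_i > b_i$, let $M_i := \max \fC(I_i; I_{i-1})$, and define $\Phi(c) := (a_1 b_1 / a_2 b_2 / \cdots / a_k b_k)$. Conversely, $\Psi$ sends an alternating $\B$-permutation $\pi = (x_1 x_2 / \cdots / x_{2k-1} x_{2k})$ to the chain with $I_i := \{x_1, \ldots, x_{2i}\}$. As set-theoretic maps these are obviously inverse, so the content is to check the image restrictions.

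The central combinatorial step is the following key lemma: for any maximal chain $c$ of $\widehat{\cP}_\B$ with no decreasing position, $M_i = \max I_i$ holds for every $i$. I prove this by induction on $i$. The base case is immediate since $|I_1| = 2$ forces $\B|_{I_1}$ to consist of a single component of size two. For the inductive step, if $a_{i+1} > \max I_i$ then $a_{i+1} = \max I_{i+1}$ lies in $\fC(I_{i+1};I_i)$, giving $M_{i+1} = \max I_{i+1}$. Otherwise $\max I_{i+1} = \max I_i = M_i$, and the absence of a decreasing position rules out $M_i > M_{i+1}$, so together with $M_{i+1} \leq \max I_{i+1}$ we get $M_{i+1} = M_i = \max I_{i+1}$. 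A direct unpacking of the lexicographic order $\succeq$ on $\Z \times \Omega$ then shows that, under $M_i = \max I_i$, the inequality $\mu_\B(I_{i-1},I_i) \succ \mu_\B(I_i,I_{i+1})$ is equivalent to $b_i \geq a_{i+1}$, which is precisely the negation of the alternating condition $x_{2i} < x_{2i+1}$.

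The main obstacle is to verify that $\Phi(c)$ is a $\B$-permutation, and this is where chordality becomes essential. At each even step $j = 2i$, the element $b_i = x_{2i}$ and $\max I_i$ both lie in $\fC(I_i;I_{i-1})$ by the key lemma, so the $\B$-permutation condition is automatic. At each odd step $j = 2i-1$, the only non-trivial case is $a_i < \max I_{i-1}$, in which case $\max(I_{i-1} \cup \{a_i\}) = \max I_{i-1} = \max I_i$, and the component $C := \fC(I_i;I_{i-1}) \in \B$ contains $a_i$, $b_i$, and $\max I_{i-1}$. Writing $C = \{c_1 < c_2 < \cdots < c_r\}$ and applying chordality, the upward-closed subset $\{c \in C : c \geq a_i\}$ lies in $\B$; since $b_i < a_i$, this set is contained in $I_{i-1} \cup \{a_i\}$, and it connects $a_i$ to $\max I_{i-1} = c_r$ inside $\B|_{I_{i-1} \cup \{a_i\}}$.

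For the reverse direction, I argue inductively that $\Psi(\pi)$ is a maximal chain in $\widehat{\cP}_\B$. Assuming $I_{i-1} \in \widehat{\cP}_\B$, the $\B$-permutation conditions at steps $2i-1$ and $2i$, combined with $x_{2i-1} > x_{2i}$, place $a_i$, $b_i$, and $\max I_i$ in a single component of $\B|_{I_i}$. Every component of $\B|_{I_{i-1}}$ that merges into this new component is even by the inductive hypothesis, so the merged component has even size, and $I_i \in \widehat{\cP}_\B$. By construction $M_i = \max I_i$, and the same equivalence from the previous paragraph converts the alternating condition on $\pi$ into the absence of decreasing positions, completing the proof.
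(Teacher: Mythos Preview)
Your proof is correct and follows essentially the same approach as the paper: both set up the obvious mutually inverse maps between permutations and chains, verify that chains with no decreasing position yield alternating $\B$-permutations (using chordality at the odd steps to produce an element of $\B$ connecting $a_i$ to the running maximum), and check that alternating $\B$-permutations yield chains in $\widehat{\cP}_\B$ with no decreasing position. The one organizational difference is that you isolate the statement $M_i = \max I_i$ as a standalone key lemma and then reduce the decreasing-position condition to $b_i \geq a_{i+1}$ once and for all, whereas the paper embeds this fact inside its induction and treats the two cases $\max\fC(I_{N-1};I_{N-2}) < \max\fC(I_N;I_{N-1})$ and $\geq$ separately; your packaging is a bit cleaner, but the underlying argument is the same.
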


\begin{proof}
    Let $\cF$ denote the set of alternating $\B$-permutations, and $\cG$ the set of maximal chains $c$ of~$\widehat{\cP}_\B$ such that $\mu_\B(c)$ has no decreasing position.

    Consider an alternating $\B$-permutation $x= (x_1x_2/\cdots/x_{2k-1}x_{2k})$.
    For each $1 \leq i \leq k$, since $x_{2i-1} > x_{2i}$, it follows that $\max \{x_1,\ldots,x_{2i-1}\} = \max \{x_1,\ldots,x_{2i}\}$, and then $x_{2i-1}$ and $x_{2i}$ are contained in the same connected component of $\B$.
    For each $1 \leq i \leq k$, we have
    $$
    I^x_i \coloneqq \{x_1,\ldots,x_{2i}\} \in \widehat{\cP}_\B.
    $$
    For each $1 \leq i < k$, considering the fact that $\max \{x_1,\ldots,x_{2i+1}\}$ and $x_{2i+1}$ belong to the same connected component of~$\B \vert_{\{x_{1},\ldots,x_{2i+1}\}}$, we have
    $$
    \max \fC(I^x_{i};I^x_{i-1}) \leq \max \{x_1,\ldots,x_{2i+1}\} =\max \fC(I^x_{i+1};I^x_{i}).
    $$
    Since $x_{2i} < x_{2i+1}$ and $x_{2i-1} \neq x_{2i+1}$, the pair~$((x_{2i-1},x_{2i}),(x_{2i+1},x_{2i+2}))$ is neither an element of~$\cR_1$ nor~$\cR_2$, which confirms that
    $$
    \mu_\B(I_{i-1},I_{i}) \not\succ \mu_\B(I_i,I_{i+1}).
    $$
    Hence, for each alternating $\B$-permutation $x$, the maximal chain
    $$
    \Phi(x) \coloneqq (\emptyset = I^x_0,I^x_1,\ldots,I^x_k)
    $$
    has no decreasing position, and then $\Phi$ can be regarded as an injective map from~$\cF$ to~$\cG$.

    Let $c = (I_0,I_1,\ldots,I_k) \in \cG$.
    For each $1 \leq i \leq k$, we define the pair~$(x^c_{2i-1},x^c_{2i})$ of $S$ as
    $$
    (x^c_{2i-1},x^c_{2i}) = (\max I_{i} \setminus I_{i-1},\min I_{i} \setminus I_{i-1}).
    $$ 
    Define a permutation $\Psi(c)\coloneqq (x^c_1x^c_2/\cdots/x^c_{2k-1}x^c_{2k})$ on $S$.
    We will prove the following statement by mathematical induction on $k$:
    \begin{equation}\label{induc}
      \text{$\Psi(c)$ is an alternating $\B$-permutation.}
    \end{equation}    
    To begin with, it is obviously confirmed that $\Psi(c) = (x^c_1 x^c_2)$ is an alternating $\B$-permutation.
    Let us assume that \eqref{induc} holds for $k = N-1$, where $N \geq 2$.
    Let $c = (I_0,\ldots,I_{N})$ be a maximal chain of $\widehat{\cP}_\B$ such that $\mu_\B(c)$ has no decreasing position.
    Consider a restricted building set $\B' \coloneqq \B \vert_{I_1 \cup \cdots \cup I_{N-1}}$ of $\B$.
    Note that $c' = (I_0,\ldots,I_{N-1})$ is a maximal chain of $\widehat{\cP}_{\B'}$ and $c'$ also has no decreasing position.
    By the assumption, the permutation $\Psi(c')$ is an alternating $\B'$-permutation.
    To show that $\Psi(c) = (x^c_1x^c_2/\cdots/x^c_{2N-1}x^c_{2N})$ is an alternating $\B$-permutation, it suffices to demonstrate that the following two conditions hold:
    \begin{enumerate}
      \item[(C1)] $x^c_{2N-2} < x^c_{2N-1}$, and
      \item[(C2)] $x^c_{2N-1}$ and $\max \{x^c_1,\ldots,x^c_{2N-1}\}$ lie in the same connected component of $\B \vert_{\{x^c_1,\ldots,x^c_{2N-1}\}}$.
    \end{enumerate}
    Put $x^c_\ell \coloneqq\max \{x^c_1,\ldots,x^c_{2N-2}\}$, where $1 \leq \ell \leq 2N-2$.
    By the assumption, since $\Psi(c')$ is a $\B$-permutation, $x^c_\ell$ and $x^c_{2N-2}$ lie in the same connected component of $\B'$.
    It follows that $x^c_\ell$ is an element of $\fC(I_{N-1};I_{N-2})$, and then, $x^c_\ell = \max \fC(I_{N-1};I_{N-2})$.
    We divide our proof into two distinct cases; $\max \fC(I_{N-1};I_{N-2}) < \max \fC(I_N;I_{N-1})$, or not.

    Consider the first case.
    We have
    $$
        x^c_\ell = \max \fC(I_{N-1};I_{N-2}) < \max \fC(I_{N};I_{N-1}).
    $$
    This inequality implies that $\max \fC(I_{N};I_{N-1})$ is either $x^c_{2N-1}$ or $x^c_{2N}$.
    Considering $x^c_{2N-1} > x^c_{2N}$, we obtain $\max \fC(I_{N};I_{N-1}) = x^c_{2N-1}$.
    In conclusion, the conditions (C1) and (C2) hold for $c$, which confirms that $\Psi(c)$ is an alternating $\B$-permutation.
    
    Now, we consider the case where $\max \fC(I_{N-1};I_{N-2}) \geq \max \fC(I_N;I_{N-1})$.
    Note that $c$ does not have a decreasing position at $(N-1)$, that is,
    \begin{equation}\label{prec}
      \mu_\B(I_{N-2},I_{N-1}) \not\succ \mu_\B(I_{N-1},I_{N}).
    \end{equation}
    It can be observed that $\fC(I_{N-1};I_{N-2}) = \fC(I_N;I_{N-1})$ and $x^c_\ell > x^c_{2N-1} > x^c_{2N}$.
    By the chordality of the building set $\B$, we have that
    $$
    \cC \coloneqq \{x \in \fC(I_N;I_{N-1}) \colon x > x^c_{2N}\}
    $$
    is also an element of $\B$.
    Thus,
    $$
    \{x^c_\ell , x^c_{2N-1}\} \subset \cC \in \B \vert_{\{x^c_1,\ldots,x^c_{2N-1}\}}.
    $$
    Since $x^c_\ell$ is also $\max \{x^c_1,\ldots,x^c_{2N-1}\}$, the condition (C2) holds for $c$.
    By \eqref{prec},
    $$
    ((x^c_{2N-3},x^c_{2N-2}),(x^c_{2N-1},x^c_{2N})) \notin \cR_1,
    $$
    and hence, the condition~(C1) holds for $c$.
    
    By mathematical induction, $\Psi(c)$ is an alternating $\B$-permutation for all $c \in \cG$.
    Since $\Phi(\Psi(c))=c$ for $c \in \cG$, $\Phi$ is surjective, as desired.
\end{proof}

\begin{theorem} \label{thm:a(G)}
    Let $\B$ be a chordal building set.
    Then, $\alt(\B)$ is the number of alternating $\B$-permutations.
\end{theorem}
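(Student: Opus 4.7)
The plan is to combine the shelling structure established in Section~\ref{section3} with the bijection of Lemma~\ref{Bstandard}. By Theorem~\ref{thm:shellability_of_K_P_for_chordal}, the value $\alt(\B)$ was defined as the number of top-dimensional spheres in the bouquet $\Delta(\cP_\B)\simeq\bigvee^{\alt(\B)} S^{k-2}$, and this bouquet decomposition arose from the reversed edge lexicographical labeling $\mu_\B$ of the Hasse diagram of $\widehat{\cP}_\B$ (Lemma~\ref{shellable}). So the task reduces to identifying $\alt(\B)$ with a combinatorial quantity read off from $\mu_\B$.

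The key tool is the Björner--Wachs classification of homology-contributing facets under an EL-shelling. The shelling of $\Delta(\widehat{\cP}_\B)$ is obtained by ordering maximal chains according to (reverse) lex order of their label sequences $\mu_\B(c)$; under this shelling, a facet $c = (I_0,\ldots, I_k)$ contributes a new top-dimensional sphere precisely when, at every interior rank $0<i<k$, replacing $I_i$ by any other element of the open interval $(I_{i-1},I_{i+1})$ yields a chain that precedes $c$ in the shelling order. In the reversed convention of this paper, this ``spanning'' condition translates exactly to the requirement that $\mu_\B(c)$ admit no decreasing position in the sense defined just above Lemma~\ref{Bstandard}, i.e.\ $\mu_\B(I_{i-1},I_i)\not\succ \mu_\B(I_i,I_{i+1})$ for every $1\leq i<k$. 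Consequently $\alt(\B)$ equals the number of maximal chains of $\widehat{\cP}_\B$ whose label sequence under $\mu_\B$ has no decreasing position.

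Finally, Lemma~\ref{Bstandard} furnishes an explicit bijection between that set of maximal chains and the set of alternating $\B$-permutations. Composing the two identifications gives $\alt(\B) = \#\{\text{alternating }\B\text{-permutations}\}$, as claimed.

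The only conceptual subtlety is matching the ``spanning facet'' condition of the Björner--Wachs theorem with the paper's notion of ``no decreasing position'' under the reversed convention; once that dictionary is fixed, the argument is immediate. In particular, the combinatorial work -- verifying that the label sequence with no decreasing position corresponds to alternating $\B$-permutations -- has already been carried out in Lemma~\ref{Bstandard}, so no further counting is needed.
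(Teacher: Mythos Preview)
Your argument is correct and genuinely different from the paper's. You invoke the Bj\"orner--Wachs classification of homology facets directly: under an EL-shelling the spheres in the bouquet are indexed by the ``falling'' maximal chains, which in the reversed convention become exactly the chains with no decreasing position, and Lemma~\ref{Bstandard} then finishes the count. The paper instead avoids citing that classification and proceeds by an explicit Euler-characteristic computation: it extends every chain of $\cP_\B$ to a maximal chain of $\widehat{\cP}_\B$ by filling each gap with the unique $\mu_\B$-decreasing chain, counts how many $\ell$-chains extend to a given maximal chain with $N$ decreasing positions (a binomial coefficient), sums the resulting alternating expression, and collapses it via the binomial theorem to $\chi(\Delta(\cP_\B)) = 1 + (-1)^{k-2}\alt_0$; comparison with $\chi = 1 + (-1)^{k-2}\alt(\B)$ gives $\alt(\B)=\alt_0$, and Lemma~\ref{Bstandard} is again the final step. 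Your route is shorter and more conceptual, at the cost of importing the Bj\"orner--Wachs spanning-facet theorem as a black box; the paper's route is self-contained and makes the Euler-characteristic mechanism visible. One small point worth recording in your write-up: the Bj\"orner--Wachs criterion literally gives ``$\mu_\B(I_{i-1},I_i)\not\succeq\mu_\B(I_i,I_{i+1})$'' rather than ``$\not\succ$'', but these coincide here because consecutive edge labels can never agree (the sets $I_i\setminus I_{i-1}$ and $I_{i+1}\setminus I_i$ are disjoint, so their max/min pairs differ).
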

\begin{proof}
    Let $\B$ be a chordal building set on $S$ of size $2k$.
    There is an one-to-one correspondence between $(\ell-1)$-simplices in $\Delta(\cP_\B)$ and chains of length $\ell$ in $\cP_\B$ for $1 \leq \ell \leq k-1$.
    Then each $(\ell-1)$-simplex can be regarded as an $\ell$-length chain of~$\cP_\B$.
    
    Let an $\ell$-length chain $c = (S_1, S_2, \ldots, S_\ell)$ of $\cP_\B$ be given, where $1 \leq \ell \leq k$.
    For convenience, let $S_0 \coloneqq \emptyset$ and $S_{\ell+1} \coloneqq S$.
    For each $1 \leq i \leq \ell+1$, according to Lemma~\ref{shellable}, there is the unique maximal chain
    $$c^i = (S_{i-1} = I^c_{k_0+\cdots+k_{i-1}}, I^c_{k_0+\cdots+k_{i-1}+1}, \ldots , I^c_{k_0+\cdots+k_{i-1}+k_{i}} = S_i),$$
    of the interval $[S_{i-1},S_{i}]$ of $\widehat{\cP}_\B$ such that $\mu_\B(c^i)$ is decreasing, where $k_0 =0$, and $2k_i = \left\vert S_{i} \setminus S_{i-1} \right\vert$.
    In conclusion, for each $\ell$-length chain $c$ of $\cP_\B$, there is the induced maximal chain 
    $$
    \bar{c} \coloneqq (I^c_0,I^c_1,\ldots,I^c_k)
    $$
    of $\widehat{\cP}_\B$.
    
    Let us consider a maximal chain $d = (I_0,I_1,\ldots,I_{k})$ of $\widehat{\cP}_\B$ with $N$ decreasing positions, where $1 \leq N \leq k-1$.
    Define the set
    $$
    \cI_{d} = \{1 \leq i \leq k-1 \colon \text{ $d$ does not have a decreasing position at $i$.}\}
    $$
    of indices.
    Let $c = (I_{i_1},\ldots,I_{i_\ell})$ be an $\ell$-length chain of $\cP_\B$.
    The induced maximal chain $\bar{c}$ of $\widehat{\cP}_\B$ is equal to $d$ if and only if
    $$
    \cI_{d} \subset \{i_1,i_2,\ldots,i_{\ell}\}.
    $$
    Consequently, the number of $\ell$-length chains of $\cP_\B$ that induces $d$ is given by
    $$
    {N \choose \ell-\left\vert\cI_{d}\right\vert} = {N \choose \ell-(k-1-N)} = {N \choose k-1-\ell},
    $$
    if $k-1-N \leq \ell \leq k-1$, and $0$ otherwise.
    
    Let $\alt_N$ denote the number of maximal chains of $\widehat{\cP}_\B$ with $N$ decreasing positions for each $0 \leq N \leq k-1$.    
    The total number of $\ell$-length chains of $\cP_\B$ is
    $$
    \sum_{N = k-1-\ell}^{k-1}{N \choose k-1-\ell}\alt_N.
    $$
   
    Now, the Euler characteristic $\chi(\Delta(\cP_\B))$ of $\Delta(\cP_\B)$ is
    \begin{align*}
      \chi(\Delta(\cP_\B)) & =\sum_{\ell=1}^{k-1}{(-1)^{\ell-1} \# \ell \text{-length chains of }} \cP_\B \\
      & = \sum_{\ell=1}^{k-1} \sum_{N = k-1-\ell}^{k-1} (-1)^{\ell-1} {N \choose k-1-\ell}\alt_N \\
      & = 
      \alt_{k-1} +\sum_{\ell=0}^{k-1} \sum_{N = k-1-\ell}^{k-1} (-1)^{\ell-1} {N \choose k-1-\ell}\alt_N \\
      & = \alt_{k-1} +  \sum_{N = 0}^{k-1} \left( \sum_{\ell=k-1-N}^{k-1} (-1)^{\ell-1} {N \choose k-1-\ell} \right) \alt_N.
    \end{align*}
    By the binomial theorem, we have that
    $$
    \sum_{\ell=k-1-N}^{k-1} (-1)^{\ell-1} {N \choose k-1-\ell} = 0
    $$
    for all $1 \leq N \leq k-1$, and, hence, $\chi(\Delta(\cP_\B)) = \alt_{k-1} +(-1)^{k-2}\alt_0$.
    
    We note that $\alt_{k-1}=1$ by Lemma~\ref{shellable}. 
    In addition, by Theorem~\ref{thm:shellability_of_K_P_for_chordal}, $\chi(\Delta(\cP_\B)) = 1 + (-1)^{k-2} \alt(\B)$.
    Therefore, $\alt(\B) = \alt_0$, that is the number of alternating $\B$-permutations by Lemma~\ref{Bstandard}.
    
\end{proof}

\section{Proof of the main theorem} \label{sec:proof_of_main_theorem}

In this section, we prove Theorem~\ref{main1}.
For a connected building set $\B$ on $[n+1] =\{1,\ldots,n+1\}$, the rational Betti number of $X^\R_\B$ is determined by the reduced Betti numbers of $(K_\B)_I$ for even cardinality subset $I$ of $[n+1]$ by \eqref{Betti_comp}, where $(K_{\B})_I$ the full subcomplex of $K_{\B}$ induced by all vertices $J$, where $\left\vert J \cap I \right\vert$ is odd.

Let us consider the case when $n+1$ is even, and $I = [n+1]$.
We denote by~$K_{\B}^{\odd}$ (respectively, $K_{\B}^{\even}$) the induced subcomplex of a nested set complex~$K_{\B}$ whose vertices have odd (respectively, even) cardinality.
In other words, $K_{\B}^{\odd}=(K_{\B})_{[n+1]}$ and $K_{\B}^{\even}$ is the complement of the simplicial complex of $K_{\B}^{\odd}$ in $K_\B$.
Since the geometric realization of $K_\B$ is topologically homeomorphic to the $(2k-2)$-dimensional sphere $S^{2k-2}$, by the Alexander duality, $H_\ast(K_{\B}^{\odd})$ is completely determined by $H^\ast(K_{\B}^{\even})$. 
In addition, with the coefficients of (co)homology being in a field $\Q$, by the Universal Coefficient Theorem, we have
\begin{equation}\label{Alex_dual}
  \tilde{H}_i(K_{\B}^{\odd}) \cong \tilde{H}_{2k-3-i}(K_{\B}^{\even}) \text{ for all $i \geq 0$}.
\end{equation}
Refer \cite[Theorem 71.1 and 53.5]{Munkres1984book} for the Alexander duality and the Universal Coefficient Theorem, respectively.

\begin{lemma}\label{geo_real}
    The geometric realizations of $\Delta(\cP_\B)$ and $K_{\B}^{\even}$ are homeomorphic.
    In addition, if $\B$ is chordal, the $i$th reduced rational Betti numbers~$\tilde{\beta}_i$ of $K_{\B}^{\odd}$ are
    $$
    \tilde{\beta}_{i}(K_{\B}^{\odd}) =
    \begin{cases}
      \alt(\B), & \mbox{if } i = k-1,\\
      0, & \mbox{otherwise}.
    \end{cases}
    $$
\end{lemma}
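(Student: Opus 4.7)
My plan is to split the proof into two essentially independent parts: first the homeomorphism $|\Delta(\cP_\B)| \cong |K_\B^{\even}|$, which does not use chordality; and second, combining this with Theorem~\ref{thm:shellability_of_K_P_for_chordal} and the Alexander duality in \eqref{Alex_dual} to pin down the reduced Betti numbers of $K_\B^{\odd}$.

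For the homeomorphism I would route through the face poset $\cQ$ of $K_\B^{\even}$, that is, the set of nonempty nested subsets of $\B \setminus \{[n+1]\}$ all of whose elements have even cardinality, ordered by inclusion. By construction $|\Delta(\cQ)|$ is the barycentric subdivision of $|K_\B^{\even}|$, so these realizations are already homeomorphic. The next step is to construct an order-preserving surjection $g \colon \cQ \to \cP_\B$ by $g(N) = \bigcup_{J \in N} J$ and to verify that it lands in $\cP_\B$. The core observation is that the maximal elements $M_1, \dots, M_p$ of $N$ are pairwise disjoint even sets with $g(N) = \bigcup M_i$, and the nested-set axiom forbids any union of two or more of them from lying in $\B$; this forces the $M_i$'s to be exactly the connected components of $\B \vert_{g(N)}$, so all components are even and $g(N) \in \cP_\B$.

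I would then invoke Quillen's fiber theorem to obtain $|\Delta(\cQ)| \simeq |\Delta(\cP_\B)|$. For $I \in \cP_\B$, writing $\phi(I)$ for the nested set formed by the connected components of $\B \vert_I$, the self-map $N \mapsto N \cup \phi(I)$ on the fiber $g^{-1}(\cP_\B^{\leq I})$ satisfies $N \leq N \cup \phi(I) \geq \phi(I)$, which yields the required contractibility. The nontrivial step is to check that $N \cup \phi(I)$ is still a nested set. Pairwise nested-or-disjoint follows because every $J \in N$ is $\B$-connected inside $I$ and hence contained in a unique component of $\B \vert_I$, so $J$ is either nested with or disjoint from each $C \in \phi(I)$. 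The disjoint sub-collection axiom is more delicate: a disjoint family drawn partly from $N$ and partly from $\phi(I)$ whose union lay in $\B$ would be $\B$-connected inside $\B \vert_I$, hence contained in a single component of $\B \vert_I$, forcing at most one $\phi(I)$-element in the family and embedding every $N$-element of the family into that component, contradicting disjointness. To upgrade homotopy equivalence to the stated honest homeomorphism, I would use the same combinatorics to define a piecewise-linear map $|\Delta(\cP_\B)| \to |K_\B^{\even}|$ sending each vertex $I$ to the barycenter of the simplex $\phi(I)$ in $K_\B^{\even}$; this is well-defined because $\phi(I_1) \cup \cdots \cup \phi(I_m)$ is itself a nested set (proved by the same argument) for every chain $I_1 \subsetneq \cdots \subsetneq I_m$ in $\cP_\B$, and it realizes $\Delta(\cP_\B)$ as a subdivision of $K_\B^{\even}$.

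For the second part, Theorem~\ref{thm:shellability_of_K_P_for_chordal} gives $|\Delta(\cP_\B)| \simeq \bigvee^{\alt(\B)} S^{k-2}$ when $\B$ is chordal, so the homeomorphism above yields $\tilde{\beta}_{k-2}(K_\B^{\even}) = \alt(\B)$ and $\tilde{\beta}_i(K_\B^{\even}) = 0$ otherwise. Plugging into \eqref{Alex_dual}, the identity $2k-3-i = k-2$ forces $i = k-1$, yielding $\tilde{\beta}_{k-1}(K_\B^{\odd}) = \alt(\B)$ and zero otherwise. I expect the main technical obstacle to be the nested-set verification for $N \cup \phi(I)$ inside the Quillen step; once that combinatorial point is nailed down, the rest of the argument is essentially formal.
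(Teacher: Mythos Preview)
Your second part (Alexander duality plus Theorem~\ref{thm:shellability_of_K_P_for_chordal}) is exactly what the paper does. For the first part, the paper gives no argument of its own: it simply observes that the proof of \cite[Lemma~4.7]{Choi-Park2015}, written there for graphical building sets, nowhere uses graphicality, and so carries over verbatim. Your Quillen--fiber argument is a correct and genuinely self-contained alternative that the paper does not supply. The verification that $g(N)\in\cP_\B$, that $\phi(I)$ is the minimum of the fiber $g^{-1}(I)$, and that $N\cup\phi(I)$ is again nested, are all sound (in particular, your disjoint--subcollection check is the right one: any $\B$-connected union sits in a single component of $\B\vert_I$, which forces at most one element of $\phi(I)$ in the family and embeds every $N$-member in it). So your route yields $\lvert\Delta(\cQ)\rvert\simeq\lvert\Delta(\cP_\B)\rvert$, hence $\lvert K_\B^{\even}\rvert\simeq\lvert\Delta(\cP_\B)\rvert$, and this already suffices for the Betti-number statement and for every downstream use in Section~\ref{sec:proof_of_main_theorem}.

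The one genuinely incomplete step is the upgrade from homotopy equivalence to the \emph{homeomorphism} asserted in the lemma. Your proposed PL map $I\mapsto b_{\phi(I)}$ is well defined (you correctly check that $\phi(I_1)\cup\cdots\cup\phi(I_m)$ is nested for any chain), but ``realizes $\Delta(\cP_\B)$ as a subdivision'' does not follow from well-definedness alone. The difficulty is that $\phi$ is \emph{not} order-preserving: for $I_1\subsetneq I_2$, components of $\B\vert_{I_1}$ may merge in $\B\vert_{I_2}$, so $\phi(I_1)\not\subset\phi(I_2)$ in general, and hence the barycenters $b_{\phi(I_1)},\dots,b_{\phi(I_m)}$ need not span a simplex of the barycentric subdivision of $K_\B^{\even}$. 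Showing that the map is nonetheless a PL homeomorphism (injective and surjective onto each maximal face) requires an additional argument --- e.g.\ a bijection between maximal chains of $\cP_\B$ and maximal simplices of $K_\B^{\even}$ together with a local analysis on each such simplex --- which you have not yet given. If you only need the Betti numbers, you can simply state homotopy equivalence and skip this step; if you want the full homeomorphism as stated, this is where the remaining work lies, not in the nested-set check you flagged.
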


\begin{proof}
  A proof of the former statement of this lemma, limited to graphical building sets, is provided in \cite[Lemma 4.7]{Choi-Park2015}.
  However, upon closer inspection, the proof does not actually rely on the condition of being a graphical building set, confirming that the lemma holds for any building sets.
  The latter statement follows from the preceding statement, Theorem~\ref{thm:shellability_of_K_P_for_chordal} and~\eqref{Alex_dual}.
\end{proof}

Now, let us consider more general cases.
For any simplicial complex $K$, the \emph{link} $\Lk_K(v)$ of a vertex $v$ in $K$ is the set of all faces $\sigma \in K$ such that $v \notin \sigma$ and $\sigma \cup \{v\} \in K$.
It is well known that $\Lk_K(v)$ is a full subcomplex of $K$.
If $\Lk_K(v)$ is a topological cone, then $K$ is homotopy equivalent to the simplicial complex $K - v \coloneqq \{\sigma \setminus \{v\} \colon \sigma \in K\}$.

A simplicial complex $K$ is said to be \emph{flag} if it contains every clique of its $1$-skeleton.
It can be immediately observed that an induced subcomplex of a flag complex is also a flag complex.
For each subset $I$ of $[n+1]$ with even cardinality, we shall discuss the induced subcomplex $(K_{\B})_I$ in order to compute the Betti numbers of $X_{\B}^\R$.
If a nested set complex $K_\B$ is flag, then all induced subcomplexes $(K_{\B})_I$ are also flag complexes.
For each vertex $X$ in $(K_{\B})_I$, a vertex $Y$ in $(K_{\B})_I$ is also a vertex in $\Lk_{(K_{\B})_I}(X)$ if and only if the pair $(X,Y)$ holds the following conditions: 
\begin{equation}\label{3condi}
X \subsetneq Y, Y \subsetneq X,  \text{ or } X \cup Y \notin \B.
\end{equation}

\begin{lemma}\label{remove}
    Assume that a nested set complex $K_\B$ is flag.
    For any $I \subset [n+1]$ with even cardinality, an induced subcomplex $(K_{\B})_I$ is homotopy equivalent to $K_{\B \vert_{I}}^{\text{odd}}$.
\end{lemma}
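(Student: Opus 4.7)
The plan is to remove, one at a time and in a carefully chosen order, the ``extra'' vertices
\[
V_1 \coloneqq \{X \in \B \setminus \{[n+1]\} : X \not\subseteq I,\ |X \cap I| \text{ odd}\}
\]
of $(K_\B)_I$ that do not belong to $K_{\B \vert_I}^{\text{odd}}$, showing at each step that the link of the removed vertex is a cone, so that homotopy type is preserved by the principle recalled before the lemma. Since $K_\B$ is flag, every induced subcomplex encountered is also flag, and in a flag complex it suffices to exhibit a single vertex of the link that is joined in the $1$-skeleton to every other vertex of the link.

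Enumerate $V_1 = \{X_1, \ldots, X_m\}$ with $|X_1| \leq \cdots \leq |X_m|$, and let $K_i$ denote the induced subcomplex of $(K_\B)_I$ on $V((K_\B)_I) \setminus \{X_1, \ldots, X_{i-1}\}$. Fix $X = X_i$ and consider the restricted building set $\B\vert_{X \cap I}$, whose connected components partition $X \cap I$. Because $|X \cap I|$ is odd, at least one component $c$ has odd cardinality; pick such a $c$. Then $c \in \B$, $c \subseteq I$, $|c|$ is odd, and $c \subsetneq X$, so $c$ is a vertex of $K_{\B \vert_I}^{\text{odd}}$, is still present in $K_i$, and lies in $\Lk_{K_i}(X)$ as the candidate apex.

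The key step is to verify, through the three possibilities in~\eqref{3condi}, that every other vertex $Y \in \Lk_{K_i}(X)$ is adjacent to $c$. If $X \subsetneq Y$, then $c \subsetneq X \subsetneq Y$ gives a nested edge. If $X \cap Y = \emptyset$ with $X \cup Y \notin \B$, then assuming $c \cup Y \in \B$ would combine with $X$, which meets $c \cup Y$ in the nonempty set $c$, via the building-set axiom to give $X \cup Y \in \B$, a contradiction; hence $c \cup Y \notin \B$. The delicate case is $Y \subsetneq X$: the ordering guarantees that any vertex $Y$ still present in $K_i$ with $|Y| < |X_i|$ cannot lie in $V_1$, so $Y \subseteq I$ and $Y \in \B\vert_{X \cap I}$, placing $Y$ inside some component $c_j$. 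If $c_j = c$ then $Y \subsetneq c$; if $c_j \neq c$ then $Y \cap c = \emptyset$, and $Y \cup c \in \B$ would exhibit an element of $\B\vert_{X \cap I}$ meeting two distinct components $c$ and $c_j$, contradicting the partition property.

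The hardest point is precisely this sub-case $Y \subsetneq X$ with $Y$ in a component different from $c$; it is where the cardinality ordering, flagness, and the partition of $X \cap I$ into $\B$-components are simultaneously needed, and a different removal order would not support the argument. Once it is established, $\Lk_{K_i}(X)$ is a flag complex in which $c$ is joined to every other vertex, hence equals $c \ast (\Lk_{K_i}(X) \setminus \{c\})$ and is a cone. The cone-link principle then gives $K_i \simeq K_{i+1}$, and iterating from $K_1 = (K_\B)_I$ to $K_{m+1} = K_{\B\vert_I}^{\text{odd}}$ completes the proof.
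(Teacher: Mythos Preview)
Your proof is correct and follows essentially the same strategy as the paper's: remove the extra vertices one at a time, choosing as cone apex an odd-cardinality connected component $c$ of $\B\vert_{X\cap I}$, and dispatch the three cases of~\eqref{3condi} in the same way. The only difference is the removal order---you order by total cardinality $|X|$, whereas the paper orders by the preorder $X_1 \trianglelefteq X_2 \Leftrightarrow X_1\cap I \subseteq X_2\cap I$; your choice is a mild simplification, since it makes the sub-case $Y\subsetneq X$ with $Y\not\subseteq I$ vacuous (the paper must still treat it, arguing that then $Y\cap I = X\cap I$ and hence $c\subseteq Y$).
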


\begin{proof}
  Define the preorder $\trianglelefteq$ of the set of vertices of $(K_{\B})_I$ as $X_1 \trianglelefteq X_2$ if $X_1 \cap I \subseteq X_2 \cap I$ for each pair $(X_1,X_2)$ of vertices in $(K_{\B})_I$.
  Let $\V$ be the set of vertices in $(K_{\B})_I$ that are not in $I$, with the understanding that $\V$ is nonempty, since the case where $\V = \emptyset$ is straightforward.
  For each $1 \leq i \leq N$, we define $\V_i$ as the set of minimal elements of
  $$
  \mathcal{U}_i = \V \setminus (\bigcup_{0 \leq j \leq i-1} \V_j)
  $$ with respect to the order $\trianglelefteq$, where $\V_0 = \emptyset$ and $N$ is a sufficiently large positive integer.
  
  Now, for each $1 \leq i \leq N$, let $(K_{\B})_I^i$ denote the simplicial complex obtained by removing vertices from $(K_{\B})_I$ belonging to $\bigcup_{0 \leq j \leq i-1} \V_j$.
  Let $X \in \V_i$ be a given.
  Considering that $\left\vert X \cap I\right\vert$ is odd, one can choose a connected component $X_0$ of $\B \vert_{X \cap I}$ with odd cardinality.
  For each vertex $Y$ of~$\Lk_{(K_{\B})_I}(X)$, the pair $(X,Y)$ satisfies \eqref{3condi}.
  To show that either $Y = X_0$ or $Y \in \Lk_{(K_{\B})_I}(X_0)$, we consider \eqref{3condi} by dividing it into three cases.
  \begin{enumerate}
      \item If $X \subsetneq Y$, then $X_0 \subseteq X \subsetneq Y$ which confirms that $Y \in \Lk_{(K_{\B})_I}(X_0)$.
      \item Let $Y \subsetneq X$.
      If $Y \subseteq I$, since $Y \in \B \vert_{X \cap I}$ and $X_0$ is a connected component of $\B \vert_{X \cap I}$, either $Y \subseteq X_0$ or $Y \cup X_0 \notin \B$.
      If $Y \not\subset I$, then $Y \in \V_j$ for $j \geq i$.
      Since $Y \cap I \subseteq X \cap I$ and $X \subset \V_i$, we have $Y \cap I = X \cap I$, and then
      $$
      X_0 \subseteq X \cap I = Y \cap I \subseteq Y.
      $$
      Thus, $Y = X_0$ or $Y \in \Lk_{(K_{\B})_I}(X_0)$.
      \item Let $X \cup Y \notin \B$.
        If $X_0 \cup Y \in \B$, since $X_0 \subset X$, then
        $$
        X \cup Y = X \cup (X_0 \cup Y) \in \B,
        $$ 
        which contradicts that $X \cup Y \notin \B$.
        Hence, $X_0 \cup Y$ is not an element of $\B$.
        It follows that $Y \in \Lk_{(K_{\B})_I}(X_0)$. 
    \end{enumerate}
  In conclusion, the link $\Lk_{(K_{\B})_I}(X)$ of $X$ in $(K_{\B})_I^i$ is a topological cone whose apex is $X_0$.
  Hence, one can remove each elements of $\V_i$ from $(K_{\B})_I^i$, ensuring that its homotopy type remains unchanged.
  
  Therefore, from $\V_1$ to $\V_N$, one can sequentially remove all elements in $\V$ from $(K_{\B})_I$ without changing its homotopy type.
  Since $(K_{\B})_I$ with all elements of $\V$ removed is isomorphic to $K_{\B \vert_{I}}^{\text{odd}}$, we finish the proof of the lemma.
\end{proof}

\begin{lemma}\label{cone}
  Let $K_\B$ be a flag nested set complex.
  For each subset $I \subset [n+1]$ with even cardinality, if $\B \vert_I$ has a connected component of odd order, then $K_{\B \vert_{I}}^{\text{odd}}$ is contractible.
\end{lemma}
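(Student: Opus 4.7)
The plan is to exhibit a cone vertex of $K_{\B \vert_{I}}^{\odd}$, from which contractibility follows immediately from the flag hypothesis. Fix a connected component $C$ of $\B \vert_I$ with $|C|$ odd. Since $C \in \B \vert_I \subseteq \B$ has odd cardinality, $C$ is a vertex of the induced subcomplex $K_{\B \vert_{I}}^{\odd}$.

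The key step is to show that for every other vertex $X$ of $K_{\B \vert_{I}}^{\odd}$, the pair $\{C, X\}$ is a nested set in $\B$, i.e., a $1$-simplex of $K_\B$. Since $X \in \B \vert_I$ and $C$ is inclusion-maximal in $\B \vert_I$, we split on whether $C \cap X$ is empty. If $C \cap X \neq \emptyset$, the building-set union axiom gives $C \cup X \in \B \vert_I$, and maximality of $C$ forces $C \cup X = C$, i.e., $X \subseteq C$, verifying condition~(1) of the nested set definition. Otherwise $C \cap X = \emptyset$, and if $C \cup X$ were an element of $\B$, then because $C \cup X \subseteq I$ we would have $C \cup X \in \B \vert_I$ strictly containing $C$, again contradicting the inclusion-maximality of $C$; hence $C \cup X \notin \B$, verifying condition~(2) for this disjoint pair. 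In either case, $\{C, X\}$ is an edge of $K_\B$, and since both endpoints lie in $\B \vert_I$ with odd cardinality, it is an edge of the induced subcomplex $K_{\B \vert_{I}}^{\odd}$.

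Finally, since $K_\B$ is flag by hypothesis, the induced subcomplex $K_{\B \vert_{I}}^{\odd}$ is flag as well. A vertex adjacent to every other vertex in a flag complex is automatically a cone apex, for given any face $\sigma$ not containing $C$, every pair of vertices in $\sigma \cup \{C\}$ forms an edge, so the flag property promotes $\sigma \cup \{C\}$ to a face. Consequently $K_{\B \vert_{I}}^{\odd}$ is the cone with apex $C$, hence contractible. The only subtle ingredient is the use of inclusion-maximality of $C$ in $\B \vert_I$ to conclude $C \cup X \notin \B$ in the disjoint case; everything else is a formal consequence of flagness and the definition of a nested set.
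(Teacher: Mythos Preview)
Your proof is correct and follows essentially the same approach as the paper: both pick an odd-cardinality connected component $C$ (the paper calls it $I_0$) of $\B\vert_I$, verify that $\{C,X\}$ is a nested set for every other vertex $X$ by exploiting the inclusion-maximality of $C$ in $\B\vert_I$, and then invoke flagness to conclude that $C$ is a cone apex. Your case split on $C\cap X=\emptyset$ versus $C\cap X\neq\emptyset$ is a minor reorganization of the paper's split on $Y\subseteq I_0$ versus $Y\not\subset I_0$, but the substance is identical.
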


\begin{proof}
  Choose a connected component $I_0$ of $\B \vert_{I}$ with odd cardinality.
  Let $Y$ be a vertex of $K_{\B \vert_{I}}^{\text{odd}}$.
  If $Y \subseteq I_0$, then either $Y = I_0$ or $(I_0,Y)$ satisfies \eqref{3condi}.
  
  Thus, we assume that $Y \not\subset I_0$.
  Since $I_0$ is a component of $\B \vert_{I}$, we have $Y \cup I_0 \notin \B \vert_{I}$.
  Hence, $(I_0,Y)$ holds \eqref{3condi}, which confirms that $K_{\B \vert_{I}}^{\text{odd}}$ is a topological cone with its apex is $I_0$, as desired.
\end{proof}

Now, we introduce the main result, restating Theorem~\ref{main1}.

\begin{theorem}
    For a connected chordal building set $\B$ on $[n+1] = \{1,2,\ldots,n+1\}$, the $k$th Betti number of $X^\R_{\B}$ is
    $$
    \beta_{k}(X^\R_{\B}) = \sum_{I \in {[n+1] \choose 2k}}\alt(\B \vert_I),
    $$
    where $\alt(\B \vert_I)$ is the number of alternating $\B \vert_I$-permutations.
\end{theorem}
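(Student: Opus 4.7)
The plan is to begin with equation \eqref{Betti_comp}, which writes $\beta_k(X_\B^\R) = \sum_{I \subset [n+1],\, |I| \text{ even}} \tilde\beta_{k-1}((K_\B)_I)$. First, I would verify that the nested set complex $K_\B$ is flag whenever $\B$ is chordal, so that Lemma~\ref{remove} applies and yields the homotopy equivalence $(K_\B)_I \simeq K_{\B|_I}^{\odd}$ for each even $I$. The goal is then to show that each summand equals $\alt(\B|_I)$ when $|I| = 2k$ and vanishes otherwise, after which the theorem follows immediately from \eqref{Betti_comp}.

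The case distinction I would use is based on the component structure of $\B|_I$. If $\B|_I$ has an odd-order connected component, Lemma~\ref{cone} gives contractibility of $K_{\B|_I}^{\odd}$, while Proposition~\ref{no_alter} combined with Theorem~\ref{thm:a(G)} yields $\alt(\B|_I) = 0$, so the contribution is trivially zero on both sides. Otherwise, write $\B|_I$ as the disjoint union of its connected components $\B_1, \ldots, \B_r$ on $I_1, \ldots, I_r$ with $|I_j| = 2k_j$, and let $|I| = 2k'$. When $r = 1$, Lemma~\ref{geo_real} applied directly to $\B|_I$ gives that $\tilde H_\ast(K_{\B|_I}^{\odd})$ concentrates in degree $k' - 1$ with rank $\alt(\B|_I)$, so this case contributes to $\beta_k$ precisely when $k' = k$.

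For $r \geq 2$, the product decomposition $P_{\B|_I} = \prod_j P_{\B_j}$ upgrades to a join decomposition $K_{\B|_I} = K_{\B_1} \ast \cdots \ast K_{\B_r}$ of nested set complexes; since the parity of $|X|$ is determined componentwise, the induced subcomplex on odd-cardinality vertices also respects this join, giving $K_{\B|_I}^{\odd} = K_{\B_1}^{\odd} \ast \cdots \ast K_{\B_r}^{\odd}$. I would apply Lemma~\ref{geo_real} to each connected $\B_j$ and then iterate the Künneth formula for joins over $\Q$ to conclude that $\tilde H_\ast(K_{\B|_I}^{\odd})$ concentrates in degree $\sum_j (k_j - 1) + (r - 1) = k' - 1$ with total rank $\prod_j \alt(\B_j)$; once again only the $I$ with $|I| = 2k$ contribute to $\beta_k$.

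The main obstacle is then to identify $\prod_j \alt(\B_j)$ with $\alt(\B|_I)$ in the disconnected case. By Theorem~\ref{thm:a(G)}, both sides enumerate alternating $\B$-permutations for their respective building sets, so the task reduces to exhibiting a bijection between alternating $\B|_I$-permutations on $I$ and tuples of alternating $\B_j$-permutations on each $I_j$. The bijection I plan to use sends a tuple to the concatenation of its entries listed in the order of increasing $\max I_j$; conversely, tracking the running maximum along an alternating $\B|_I$-permutation forces every entry strictly between two consecutive left-to-right maxima to lie in the same connected component as the most recent left-to-right maximum, which pins down the block structure uniquely. The central combinatorial check is that the alternating and $\B$-permutation constraints together forbid any interleaving between different components, which I expect to carry out by a direct induction on $r$ together with a careful analysis at the block boundaries. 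Assembling all cases gives $\tilde\beta_{k-1}((K_\B)_I) = \alt(\B|_I)$ when $|I| = 2k$ and zero otherwise, and substituting into \eqref{Betti_comp} completes the proof.
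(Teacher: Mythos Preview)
Your overall architecture matches the paper closely: start from \eqref{Betti_comp}, invoke flagness so that Lemma~\ref{remove} replaces $(K_\B)_I$ by $K_{\B|_I}^{\odd}$, dispose of the odd-component case with Lemma~\ref{cone} and Proposition~\ref{no_alter}, and then compute $\tilde\beta_\ast(K_{\B|_I}^{\odd})$ for the remaining $I$. The paper, however, does \emph{not} split into connected and disconnected cases. Lemma~\ref{shellable}, Theorem~\ref{thm:shellability_of_K_P_for_chordal}, and Theorem~\ref{thm:a(G)} are all stated and proved for an arbitrary chordal building set on an even set, so the paper simply applies Lemma~\ref{geo_real} to $\B|_I$ directly and reads off $\tilde\beta_i(K_{\B|_I}^{\odd})=\alt(\B|_I)\,\delta_{i,k-1}$. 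Your join--K\"unneth computation is a legitimate alternative route to the same homology concentration, and it does give the correct degree $k-1$ and rank $\prod_j \alt(\B_j)$.

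The genuine gap is in your final step, the claimed bijection showing $\alt(\B|_I)=\prod_j \alt(\B_j)$. The assertion that ``the alternating and $\B$-permutation constraints together forbid any interleaving between different components'' is false, and consequently concatenation in order of increasing $\max I_j$ is not a bijection. Take $\B|_I$ on $I=\{1,2,3,4,5,6\}$ with components $I_1=\{3,4\}$ (the two singletons and the pair) and $I_2=\{1,2,5,6\}$ carrying the maximal building set. There are five alternating $\B_2$-permutations, among them $(2165)$; your map sends the tuple $\big((43),(2165)\big)$ to the concatenation $(432165)$, which is neither alternating ($3\not<2$) nor a $\B|_I$-permutation (at position $3$ the entry $2$ and the running maximum $4$ lie in different components). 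The missing alternating $\B|_I$-permutation is $(214365)$, which interleaves the components as $I_2,I_1,I_2$; the blocks between consecutive left-to-right maxima \emph{do} each lie in a single component, but a component may recur. So the block structure is not ``one block per component'', and your inverse map is not well defined.

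The identity $\alt(\B|_I)=\prod_j \alt(\B_j)$ is nevertheless true. You can recover it without any bijection by observing that $\widehat\cP_{\B|_I}\cong \prod_j \widehat\cP_{\B_j}$ and invoking the standard product formula for the M\"obius function (equivalently, for the top reduced Betti number of the proper part) of a product of bounded posets; or, more in the spirit of the paper, simply apply Theorem~\ref{thm:shellability_of_K_P_for_chordal} and Theorem~\ref{thm:a(G)} directly to the disconnected chordal building set $\B|_I$, which is exactly what the paper does and which bypasses the product identity entirely.
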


\begin{proof}

    From \cite[Proposition~9.7]{Postnikov2008}, if a building set $\B$ is chordal, then a nested set complex $K_\B$ is a flag complex.
    Combining \eqref{Betti_comp} with Lemma~\ref{remove}, we have 
    $$
    \beta_k(X_\B^\R) = \sum_{\substack{I \subset [n+1] \\ \left\vert I \right\vert \text{ is even}}} \widetilde{\beta}_{k-1}(K^{\odd}_{\B \vert_I}).
    $$
    
    Since every restricted building set of a chordal building set is chordal, for a subset~$I$ of $\widehat{\cP}_\B$ with $\left\vert I \right\vert = 2k$, we have $\tilde{\beta}_{i}(K_{\B \vert_{I}}^{\text{odd}}) = \alt(\B \vert_I) \cdot \delta_{i,k-1}$ for all integers $i$, where $\delta_{i,j}$ is the Kronecker delta by Lemma~\ref{geo_real}.
  We have
  $$
  \sum_{I \in \hat{\cP}_\B} \widetilde{\beta}_{k-1} (K_\B)_I = \sum_{\substack{I \in \hat{\cP}_\B \\ \left\vert I \right\vert = 2k}} \alt(\B \vert_I).
  $$ 
  Let $I$ be a subset of $[n+1]$ such that $I \notin \widehat{\cP}_\B$.
  By Proposition~\ref{no_alter} and Lemma~\ref{cone}, $\alt(\B \vert_I)$ and~$\tilde{\beta}_i(K^{\odd}_{\B \vert_I})$ both become equal to $0$ for all $i$, respectively.
  Consequently, one can write that
  $$
  \sum_{\substack{I \notin \hat{\cP}_\B \\ \left\vert I \right\vert \text{ is even}}} (K_\B)_I = 0 = \sum_{\substack{I \notin \hat{\cP}_\B \\ \left\vert I \right\vert = 2k}} \alt(\B \vert_I).
  $$

  Therefore,
  $$
  \beta_{k}(X^\R_{\B}) = \sum_{\substack{I \subset [n+1] \\ \left\vert I \right\vert \text{ is even}}} \widetilde{\beta}_{k-1}(K^{\odd}_{\B \vert_I}) =\sum_{\left\vert I \right\vert = 2k} \alt(\B \vert_I)
  $$
for each integers $k$, where $\alt(\B \vert_I)$ is the number of alternating $\B \vert_I$-permutations by Theorem~\ref{thm:a(G)}.
\end{proof}

\begin{remark}
    Let $G$ be a finite simple graph, and let $I$ be a subset of the vertex set~$V(G)$ of~$G$ with even cardinality $2k$. 
    Note that the $k$th Betti number~$\beta_{k}((K_{\B(G)})_I)$ of $(K_{\B(G)})_I$ is the $a$-number $a(G \vert_I)$ of~$G \vert_I$.
    If $G$ is a chordal graph with vertices labeled by a perfect elimination ordering, the number $\alt(\B(G)\vert_I)$ of alternating $\B(G)\vert_I$-permutation is exactly $\beta_{k}((K_{\B(G)})_I)$, which confirms $\alt(\B(G)\vert_I)$ is equal to the $a$-number $a(G\vert_I)$.
\end{remark}

\section{Examples}\label{section4}
Let us consider several examples which illustrate Theorem~\ref{main1}.

\subsection{Permutohedra}
For a complete graph~$K_{n+1}$ with vertices $1,2,\ldots,n+1$, the graphical building set~$\B = \B(K_{n+1})$ consists of all nonempty subsets of $[n+1] =\{1,\ldots,n+1\}$.
Every permutation on $[n+1]$ is also a $\B$-permutation, which proves (1) of Corollary~\ref{graph_corr}.

\subsection{Associahedra}
Let $P_{2k}$ denote a path graph having $2k$ vertices labeled consecutively as $1,2,\ldots,2k$.
One can verify that a permutation $x = (x_1x_2\cdots x_{2k})$ is a $\B(P_{2k})$-permutation if and only if $x$ is \emph{$312$-avoiding} \ie there are no $1 \leq i < j < l \leq 2k$ such that $x_j < x_l < x_i$.
See \cite[Section 10.2]{Postnikov2008} for more details.

\begin{proof}[Proof for (2) of Corollary~\ref{graph_corr}]
    Consider a path graph $G = P_{n+1}$ with $n+1$ vertices labeled consecutively as $1,2,\ldots,n+1$, and a subset $I \subset [n+1]$ whose cardinality is $2k$.
    If $G \vert_I$ has a connected component of odd order, then there is no alternating $\B(G)\vert_I$-permutation by Proposition~\ref{no_alter}.
    
    Assume that $G \vert_I$ has no component of odd order.
    Then $G \vert_I$ consists of path graphs $G_{I_1},\ldots,G_{I_r}$, each of which has an even order $2k_1,\ldots,2k_r$, respectively.
    We assume that $\max I_i + 1 < \min I_{i+1}$ for all $1 \leq i < r$.
    For a $\B(G)\vert_I$-permutation $(x_1x_2\cdots x_{2k})$,
    $$
    I_i = \{x_{2k_0+\cdots+2k_{i-1}+1},x_{2k_0+\cdots+2k_{i-1}+2}, \ldots,x_{2k_0+\cdots+2k_{i-1}+2k_{i}}\},
    $$
    where $k_0 =0$.
    Consequently, we obtain that each $\B(G)\vert_I$-permutation is given by a tuple of $\B(G)\vert_{I_i}$-permutations for all $1 \leq i \leq r$.
    Therefore, the number of alternating $\B(G)\vert_I$-permutations is the product of the numbers of $312$-avoiding alternating permutations on $I_i$ for all $1 \leq i \leq l$.
    
    By applying Theorem~\ref{main1}, we conclude that
    $$
        \beta_k(X^\R_{\B(P_{n+1})}) = \sum_{I \in {[n+1] \choose 2k}} \prod_{1 \leq i \leq r_I} \# 312\text{-avoiding alternating permutations on }I_i,
    $$
    where the induced subgraph $G \vert_I$ consists of the even order connected components $G \vert_{I_1},\ldots,G \vert_{I_{r_I}}$.
\end{proof}

For instance, if we consider a path graph $P_6$ with six vertices labeled consecutively as $1, \ldots, 6$, then the~$k$th Betti number $\beta_k$ of $X^\R_{\B(P_6)}$ is presented in Table~\ref{betti_P6}.
\begin{table}
  \centering
   \begin{tabular}{c|c|c|c}
  &$I$& $312$-avoiding alternating permutations on $I$ & $\beta_k$ \\
  \hline
  $k = 0$ & $\emptyset$ & $\{( \ )\}$ & $1$ \\
  \hline
  \begin{tabular}{@{}c@{}}   \end{tabular} $k =1$& $\{1,2\}$ & $\{(21)\}$ & \begin{tabular}{@{}c@{}}  \end{tabular} \\
  
   & $\{2,3\}$ & $\{(32)\}$ & \\
  
  & \{3,4\} & $\{(43)\}$ & $5$\\
 
  & \{4,5\} & $\{(54)\}$ & \\
  
  & \{5,6\} & $\{(65)\}$ & \\
  \hline
  \begin{tabular}{@{}c@{}}   \end{tabular}  $k =2$& $\{1,2,3,4\}$ & $\{(2143),(3241)\}$ & \begin{tabular}{@{}c@{}}  \end{tabular} \\

   & $\{1,2,4,5\}$ & $\{(2154)\}$ & \\

 & $\{1,2,5,6\}$ & $\{(2165)\}$ & $9$\\

  & $\{2,3,4,5\}$ & $\{(3254),(4352)\}$ & \\

  & $\{2,3,5,6\}$ & $\{(3265)\}$ & \\

  & $\{3,4,5,6\}$ & $\{(4365),(5463)\}$ & \\
  \hline
  $k = 3$ & $\{1,2,3,4,5,6\}$ & $\{(214365),(215463),(324165),(325461),(435261)\}$ & $5$
\end{tabular}
  \caption{Nonzero Betti numbers of $X^\R_{\B(P_6)}$}\label{betti_P6}
\end{table}

\subsection{Stellohedra}
Let $G = K_{1,n}$ denote the star graph with the central vertex~$n+1$ connected to the vertices $1,\ldots,n$.
If $I \in {[n+1] \choose 2k}$ does not include $n+1$ as a vertex, $\B(G)\vert_I$ has a component of odd order, and then there is no alternating $\B(G)\vert_I$-permutation, as stated in Proposition~\ref{no_alter}.

Now, consider the case where $I \in {[n] \choose 2k-1}$.
Let $x= (x_1x_2\cdots x_{2k})$ be an alternating permutation on $I \cup \{n+1\}$.
Then $x$ forms a $\B(G)\vert_I$-permutation if and only if $x_1 = n+1$.
In conclusion, this implies (3) of Corollary~\ref{graph_corr}. 

\subsection{Hochschild polytopes}
In this section, we assume that $(m,n)$ is a pair of non-negative integers such that $m+n$ is even.
Denote by $\Alt_{\Hoch(m,n)}$ the set of alternating permutations~$(x_1x_2 \cdots x_{m+n})$ on $[m+n] =\{1,\ldots,m+n\}$ such that for each $i,j \in [m+n]$;
$$
m+n \geq x_{i} > x_{j} \geq m+1
$$
implies that $i <j$.

Consider a building set $\B_{m,n}$ defined in Section~\ref{Hoch_subsec}.

\begin{proposition}\label{Bmn_perm}
    For a permutation $x$ on $[m+n]$, $x$ is an alternating $\B_{m,n}$-permutation if and only if $x \in \Alt_{\Hoch(m,n)}$.
\end{proposition}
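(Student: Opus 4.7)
The plan is to prove both directions by tracking the left-to-right subsequence $y_1, y_2, \ldots, y_n$ of entries of $x$ lying in $[m+1, m+n]$, at positions $p_1 < p_2 < \cdots < p_n$. The starting point is a structural observation about $\B_{m,n}$: any $J \in \B_{m,n}$ of size at least two whose intersection with $[m+1, m+n]$ is non-empty must contain a full ``suffix'' $[m+r, m+n]$. Consequently, writing $S_i = \{x_1, \ldots, x_i\}$ and $M_i = \max S_i$, the connected component of $M_i$ in $\B_{m,n}|_{S_i}$ has a simple form: it is $(S_i \cap [1, m]) \cup [m+r^\ast, m+n]$ when $m+n \in S_i$, with $r^\ast$ the smallest index such that $[m+r^\ast, m+n] \subseteq S_i$, and it is the singleton $\{M_i\}$ when $M_i > m$ but $m+n \notin S_i$.

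For the forward direction I would prove the stronger statement that $y_i = m+n-i+1$ for every $i$ by induction on $i$. The base case $y_1 = m+n$ is the subtle step where the alternating property is used crucially. Setting $i_0 = p_1$ and assuming for contradiction that $y_1 < m+n$, the structural observation forces the component of $y_1$ in $\B_{m,n}|_{S_{i_0+1}}$ to be the singleton $\{y_1\}$ unless $x_{i_0+1} \in [m+1, m+n]$ and $x_{i_0+1} > y_1$. The $\B_{m,n}$-permutation condition therefore imposes the ascent $x_{i_0} < x_{i_0+1}$, which forces $i_0$ to be even by alternating. However, if $i_0 \geq 2$, then $x_{i_0-1} \in [1, m] < x_{i_0}$ forces $i_0$ to be odd, and if $i_0 = 1$, then $x_1 < x_2$ directly contradicts $x_1 > x_2$; in either case we obtain a contradiction. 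For the inductive step, assuming $y_1, \ldots, y_{i-1}$ already equal $m+n, m+n-1, \ldots, m+n-i+2$, the $\B_{m,n}$-permutation condition at $p_i$ combined with the structural observation gives $[y_i, m+n] \subseteq S_{p_i} \cap [m+1, m+n] = \{m+n, m+n-1, \ldots, m+n-i+2\} \cup \{y_i\}$. Since $[y_i, m+n]$ is a consecutive integer interval and $y_i$ is distinct from $y_1, \ldots, y_{i-1}$, this identity pins down $y_i = m+n-i+1$.

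The backward direction is a routine verification: given an alternating permutation with $y_i = m+n-i+1$ for all $i$, the intersection $S_{p_i} \cap [m+1, m+n] = [y_i, m+n]$ is always a suffix, so the connected component of $M_i$ in $\B_{m,n}|_{S_i}$ always contains $x_i$, whether $x_i \in [1, m]$ (lying in the $[1, m]$-part of the component) or $x_i \in [m+1, m+n]$ (lying in the suffix part). The main obstacle is the base case of the forward induction, where the rather rigid ``suffix'' structure of $\B_{m,n}$ has to be played off carefully against the alternating condition; the inductive step and the backward direction are then straightforward consequences of the structural observation.
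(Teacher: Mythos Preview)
Your proof is correct and rests on the same structural fact as the paper's---that an element $a\in[m+1,m+n]$ forms a singleton component of $\B_{m,n}|_{S_i}$ whenever the suffix $[a,m+n]$ is not yet contained in $S_i$---but the two arguments are organized differently. The paper proceeds by a single contradiction: given \emph{any} pair $i<j$ with $m+1\le x_i<x_j\le m+n$, the singleton-component observation forces $x_i=\max S_i$ (a left-to-right maximum), hence by alternating $x_i>x_{i+1}$; then $\{x_i\}$ remains a singleton component in $\B_{m,n}|_{S_{i+1}}$, so $x_{i+1}$ cannot lie in the component of $\max S_{i+1}=x_i$, violating the $\B$-permutation condition at $i+1$. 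You instead prove the stronger statement $y_t=m+n-t+1$ by induction; your base case is essentially the paper's argument specialized to the first entry $y_1$, with the contradiction arising from two consecutive ascents at $i_0-1$ and $i_0$ rather than from the $\B$-permutation condition at $i_0+1$ directly, and your inductive step is additional (correct but, in light of the paper's shortcut, unnecessary) bookkeeping. The paper's route is more concise; yours makes the decreasing-suffix structure of $\B_{m,n}$-permutations and the backward verification fully explicit. One small imprecision: your stated structural observation describes the component of $M_i$, but in the base case you apply it to the component of $y_1$; this is harmless since $M_{i_0}=M_{i_0+1}=y_1$ in the relevant sub-cases (as $S_{i_0-1}\subseteq[1,m]$), but it would be cleaner to note this or to state the observation for arbitrary elements of $S_i\cap[m+1,m+n]$.
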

\begin{proof}
Let $\B = \B_{m,n}$.
If $x$ is an element of $\Alt_{\Hoch(m,n)}$, it can be easily verified that $x$ is an alternating $\B$-permutation.
Then we only focus on the part of `only if'.

Let $x = (x_1x_2\cdots x_{m+n})$ be an alternating $\B$-permutation.
Assume that $x$ is not an element of~$\Alt_{\Hoch(m,n)}$.
Then one can choose a pair $(i,j)$ of elements of $[m+n]$ such that
$$
i < j \text{ and } m+1 \leq x_i < x_j \leq m+n.
$$
Note that the singleton $\{x_i\}$ is a connected component of the restricted building set~$\B \vert_{\{x_1,\ldots,x_i\}}$.
Since $x$ is a $\B$-permutation, we have that $x_i = \max \{x_1,\ldots,x_i\}$.
If there exists $1 \leq i'< i$ such that $x_{i'} > x_{i}$, it follows that
$$
\max \{x_1,\ldots,x_{i}\} \geq x_{i'} > x_i.
$$
which contradicts that $x_i = \max \{x_1,\ldots,x_i\}$.
Thus, we assume that $\max \{x_1,\ldots,x_{i}\} = x_i$.
Then either $i = 1$ or $x_{i-1} <x_{i}$. 
Considering that $x$ is alternating, we have that $x_i > x_{i+1}$.
It follows that the singleton $\{x_i\}$ forms also a connected component of~$\B \vert_{\{x_1,\ldots,x_{i+1}\}}$.
We conclude that $x_i$ and $x_{i+1}$ are not contained in the same connected component of $\B \vert_{\{x_1,\ldots,x_{i+1}\}}$, which contradicts $x$ is a $\B$-permutation.
\end{proof}

For non-negative integers $m$ and $n$, by Proposition~\ref{Bmn_perm}, the set $\Alt_{\Hoch(m,n)}$ can be regarded as the set of alternating $\B_{m,n}$-permutations.

\begin{lemma}\label{Hoch_no}
  Let $m$ and $n$ be non-negative integers such that $m+n$ is even.
  If $n > m+2$, there is no alternating $\B_{m,n}$-permutation.
\end{lemma}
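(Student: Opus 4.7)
The plan is to apply Proposition~\ref{Bmn_perm}, which identifies an alternating $\B_{m,n}$-permutation with an alternating permutation $(x_1 x_2 \cdots x_{m+n})$ on $[m+n]$ in which the ``big'' values $m+1, m+2, \ldots, m+n$ occupy positions $p_1 < p_2 < \cdots < p_n$ with $x_{p_i} = m+n-i+1 =: v_i$. Assuming such a permutation exists, I would show that the positions $p_i$ are tightly constrained, and then count to derive $n \leq m+2$, contradicting the hypothesis $n > m+2$.

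The key structural step is to analyze the parity of each $p_i$. Since the permutation is alternating with $x_1 > x_2 < x_3 > \cdots$, at every odd position $j$ the value $x_j$ exceeds its existing neighbors, and at every even position $j$ it is smaller than its existing neighbors. Suppose $p_i$ is even; then $x_{p_i-1} > v_i$, and, provided $p_i < m+n$, also $x_{p_i+1} > v_i$. Because $v_i \geq m+1$, any entry strictly larger than $v_i$ must be one of the big values $v_1, \ldots, v_{i-1}$, whose positions $p_1 < \cdots < p_{i-1}$ are all smaller than $p_i$. Hence $p_i + 1$ cannot host such a value, which forces $p_i = m+n$. The upshot is that every $p_i$ is odd, except possibly $p_n$, in which case $p_n = m+n$.

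The conclusion then comes from a direct count. Because $m+n$ is even, the set of odd positions in $[m+n]$ has cardinality $(m+n)/2$. If all $p_i$ are odd, then $n \leq (m+n)/2$, giving $n \leq m$. If instead $p_n = m+n$ and $p_1, \ldots, p_{n-1}$ are odd, then $n-1 \leq (m+n)/2$, yielding $n \leq m+2$. Either way $n \leq m+2$, contradicting $n > m+2$, which proves the lemma. The only non-routine step is the parity analysis above; the remainder is elementary arithmetic. The principal subtlety to watch for is the boundary case $p_i \in \{1, m+n\}$ where a neighbor may fail to exist, but $p_1 = 1$ is odd (and so not an issue), while a missing right neighbor is precisely what permits the exceptional case $p_n = m+n$ in the argument.
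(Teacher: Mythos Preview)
Your proof is correct and uses essentially the same mechanism as the paper's: both rely on Proposition~\ref{Bmn_perm} to see that the big values $m+1,\ldots,m+n$ appear in decreasing order, and then argue that such a value cannot occupy a non-terminal even position, which forces $n \leq m+2$. The paper packages this as a pigeonhole argument on the $\tfrac{m+n}{2}$ consecutive pairs $(x_{2\ell-1},x_{2\ell})$, while you phrase it as a parity analysis of the positions $p_1 < \cdots < p_n$; the underlying combinatorics is the same.
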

\begin{proof}
  Let us consider an alternating $\B_{m,n}$-permutation $x = (x_1x_2\cdots x_{m+n})$.
Note that, for each $1 \leq i \leq \frac{m+n}{2}$, $x_{2i}$ is the maximal element of $\{x_{2i},\ldots,x_{m+n}\}$.
Since $m+n$ is even, we have $n \geq m+4$.
By the pigeonhole principle, there exist $1 \leq \ell < \frac{m+n}{2}$ such that
$$
\{x_{2\ell-1},x_{2\ell}\} \subset [m+1,m+n].
$$
Since $x_{2\ell} > x_{2\ell+1}$, it contradicts the fact that $x$ is alternating.
In conclusion, if $n \geq m+2$, there is no alternating $\B_{m,n}$-permutation.
\end{proof}

Now, we prove Corollary~\ref{Hoch_corr}.

\begin{proof}[Proof for Corollary~\ref{Hoch_corr}]
    Let an integer $0 \leq k \leq \frac{m+n}{2}$ be given.
    For each $1 \leq r \leq \min \{2k,n\}$, let $\cI_r$ be the set defined as 
    $$
    \cI_r = \left\{I \in {[m+n] \choose 2k} \colon I \cap [m+1,m+n]= [m+n-(r-1),m+n]  \right\},
    $$
    and $\cI_0 \coloneqq {[m] \choose 2k}$.
    We note that $\left\vert \cI_r \right\vert = {m \choose 2k-r}$, and for each $I \in \cI_r$, the number of alternating $\B_{m,n} \vert_I$-permutations is exactly $\left\vert \Alt_{\Hoch(s,r)} \right\vert$, where $s = 2k-r$.
    
    For any given $I \in {[m+n] \choose 2k}$, by \eqref{Hoch_build}, the restricted building set $\B_{m,n} \vert_I$ has no connected component of odd order if and only if $I \cap [m+1,m+n]$ is either the empty set or $[m+r,m+n]$ for some $1 \leq r \leq n$.
    Combining Theorem~\ref{main1} with Proposition~\ref{no_alter}, we conclude that
    $$
    \beta_{k}(X^\R_{\Hoch(m,n)}) = \sum_{s+r =2k}{m \choose s}\left\vert \Alt_{\Hoch(s,r)} \right\vert,
    $$
    where $s$ and $r$ are non-negative integers.
    
    Fix a positive integer $m$, and assume that $n \geq m+2$.
    From Lemma~\ref{Hoch_no}, the cardinality of the set
    $$
    \{(s,r) \colon s+r=2k,0 \leq s \leq m, 0 \leq r \leq \min \{n,s+2\}\}
    $$
    is constant.
    For each $n \geq m+2$, we conclude that
    $$
    \beta_{k}(X^\R_{\Hoch(m,n)}) = \beta_{k}(X^\R_{\Hoch(m,m+2)})
    $$
    for all $k \geq 0$.
    \end{proof}
    
    \begin{example}
        Consider the building set $\B_{2,4}$.
        The $k$th Betti number $\beta_k$ of~$X^\R_{\Hoch(2,4)}$ is computed as in Table~\ref{betti_hoch}.
        In addition, the list of the Betti numbers $\beta_k(X^\R_{\Hoch(m,n)})$ of $X^\R_{\Hoch(m,n)}$ up to $m=8$ is given in Table~\ref{tab:appendix}.
        The Python codes used validation are available at 
        \begin{center}
            \url{https://github.com/YounghanYoon/RealHochVar}.
        \end{center}
    \end{example}

\begin{table}
        \centering
        \begin{tabular}{c|c|c|c}
  &$(s,r)$&$\Alt_{\Hoch(s,r)}$ & $\beta_k$ \\
  \hline
  $k = 0$ & $(0,0)$ & $\{( \ )\}$ & $1$ \\
  \hline
  \begin{tabular}{@{}c@{}}   \end{tabular} $k =1$& $(2,0)$ & $\{(21)\}$ & \begin{tabular}{@{}c@{}}  \end{tabular} \\
  
   & $(1,1)$ & $\{(21)\}$ & $\left\vert \Alt_{\Hoch(2,0)}\right\vert+2\left\vert \Alt_{\Hoch(1,1)}\right\vert+\left\vert \Alt_{\Hoch(0,2)}\right\vert = 4$\\
  
  & $(0,2)$ & $\{(21)\}$ & \\
  \hline
  \begin{tabular}{@{}c@{}}   \end{tabular}$k =2$ & $(2,2)$ & $\{(4231),(4132),(2143)\}$ & \begin{tabular}{@{}c@{}}  \end{tabular} \\
  
   & $(1,3)$ & $\{(4132)\}$ & $\left\vert \Alt_{\Hoch(2,2)}\right\vert+2\left\vert \Alt_{\Hoch(1,3)}\right\vert+\left\vert \Alt_{\Hoch(0,4)}\right\vert = 5$\\
  
  & $(0,4)$ & $\emptyset$ & \\
  \hline
  $k = 3$ & $(2,4)$ & $\{(625143),(615243)\}$ & $2$
\end{tabular}
\caption{Nonzero Betti numbers of $X^\R_{\Hoch(2,4)}$}\label{betti_hoch}
\end{table}

\begin{table}
  \begin{tabular}{|c|c|cccccccccc|}
\hline
$m$ & $n$ & $\beta_0$ & $\beta_1$ & $\beta_2$ & $\beta_3$ & $\beta_4$ & $\beta_5$ & $\beta_6$ & $\beta_7$ & $\beta_8$ & $\beta_9$ \\
\hline
$0$ & $\geq2$ & $1$ & $1$ &&&&&&&&\\ 
\hline
$1$ & $2$ & $1$ & $2$ &&&&&&&&\\ 
 & $\geq3$ & $1$ & $2$ & $1$ &&&&&&&\\ 
\hline
$2$ & $2$ & $1$ & $4$ & $3$ &&&&&&&\\ 
 & $3$ & $1$ & $4$ & $5$ &&&&&&&\\ 
 & $\geq4$ & $1$ & $4$ & $5$ & $2$ &&&&&&\\ 
\hline
$3$ & $2$ & $1$ & $7$ & $14$ &&&&&&&\\ 
 & $3$ & $1$ & $7$ & $17$ & $11$ &&&&&&\\ 
 & $4$ & $1$ & $7$ & $17$ & $17$ &&&&&&\\ 
 & $\geq5$ & $1$ & $7$ & $17$ & $17$ & $6$ &&&&&\\ 
\hline
$4$ & $2$ & $1$ & $11$ & $43$ & $33$ &&&&&&\\ 
 & $3$ & $1$ & $11$ & $47$ & $77$ &&&&&&\\ 
 & $4$ & $1$ & $11$ & $47$ & $89$ & $52$ &&&&&\\ 
 & $5$ & $1$ & $11$ & $47$ & $89$ & $76$ &&&&&\\ 
 & $\geq6$ & $1$ & $11$ & $47$ & $89$ & $76$ & $24$ &&&&\\ 
\hline
$5$ & $2$ & $1$ & $16$ & $105$ & $226$ &&&&&&\\ 
 & $3$ & $1$ & $16$ & $110$ & $336$ & $241$ &&&&&\\ 
 & $4$ & $1$ & $16$ & $110$ & $356$ & $501$ &&&&&\\ 
 & $5$ & $1$ & $16$ & $110$ & $356$ & $561$ & $300$ &&&&\\ 
 & $6$ & $1$ & $16$ & $110$ & $356$ & $561$ & $420$ &&&&\\ 
 & $\geq7$ & $1$ & $16$ & $110$ & $356$ & $561$ & $420$ & $120$ &&&\\ 
\hline
$6$ & $2$ & $1$ & $22$ & $220$ & $922$ & $723$ &&&&&\\ 
 & $3$ & $1$ & $22$ & $226$ & $1142$ & $2169$ &&&&&\\ 
 & $4$ & $1$ & $22$ & $226$ & $1172$ & $2949$ & $1982$ &&&&\\ 
 & $5$ & $1$ & $22$ & $226$ & $1172$ & $3069$ & $3782$ &&&&\\ 
 & $6$ & $1$ & $22$ & $226$ & $1172$ & $3069$ & $4142$ & $2040$ &&&\\ 
 & $7$ & $1$ & $22$ & $226$ & $1172$ & $3069$ & $4142$ & $2760$ &&&\\ 
 & $\geq8$ & $1$ & $22$ & $226$ & $1172$ & $3069$ & $4142$ & $2760$ & $720$ &&\\ 
\hline
$7$ & $2$ & $1$ & $29$ & $413$ & $2863$ & $6446$ &&&&&\\ 
 & $3$ & $1$ & $29$ & $420$ & $3248$ & $11507$ & $8651$ &&&&\\ 
 & $4$ & $1$ & $29$ & $420$ & $3290$ & $13327$ & $22525$ &&&&\\ 
 & $5$ & $1$ & $29$ & $420$ & $3290$ & $13537$ & $28825$ & $18186$ &&&\\ 
 & $6$ & $1$ & $29$ & $420$ & $3290$ & $13537$ & $29665$ & $32466$ &&&\\ 
 & $7$ & $1$ & $29$ & $420$ & $3290$ & $13537$ & $29665$ & $34986$ & $15960$ &&\\ 
 & $8$ & $1$ & $29$ & $420$ & $3290$ & $13537$ & $29665$ & $34986$ & $21000$ &&\\ 
 & $\geq9$ & $1$ & $29$ & $420$ & $3290$ & $13537$ & $29665$ & $34986$ & $21000$ & $5040$ &\\ 
\hline
$8$ & $2$ & $1$ & $37$ & $714$ & $7434$ & $32709$ & $25953$ &&&&\\ 
 & $3$ & $1$ & $37$ & $722$ & $8050$ & $46205$ & $95161$ &&&&\\ 
 & $4$ & $1$ & $37$ & $722$ & $8106$ & $49845$ & $150657$ & $108232$ &&&\\ 
 & $5$ & $1$ & $37$ & $722$ & $8106$ & $50181$ & $167457$ & $253720$ &&&\\ 
 & $6$ & $1$ & $37$ & $722$ & $8106$ & $50181$ & $169137$ & $310840$ & $184464$ &&\\ 
 & $7$ & $1$ & $37$ & $722$ & $8106$ & $50181$ & $169137$ & $317560$ & $312144$ &&\\ 
 & $8$ & $1$ & $37$ & $722$ & $8106$ & $50181$ & $169137$ & $317560$ & $332304$ & $141120$ &\\ 
 & $9$ & $1$ & $37$ & $722$ & $8106$ & $50181$ & $169137$ & $317560$ & $332304$ & $181440$ &\\ 
 & $ \geq 10$ & $1$ & $37$ & $722$ & $8106$ & $50181$ & $169137$ & $317560$ & $332304$ & $181440$ & $40320$ \\ 
\hline
\end{tabular}
\caption{Nonzero Betti numbers $\beta_k$ of $X^\R_{\Hoch(m,n)}$ up to $m =8$}
\label{tab:appendix}
\end{table}

\subsection{Cyclohedra} \label{subsection:Cyclohedra}
For $n \geq 5$, let us consider a cycle graph $C_n$ with the vertices $1,2,\ldots,n$. 
    Let any vertex labeling of $C_n$ be given.
    There are two neighbors $x_1$ and $x_2$ of the vertex $1$.
    Without loss of generality, assume that $x_1 > x_2$.
    Choose a neighbor $x_3 \neq 1$ of the vertex $x_2$.
    
    When $I = \{1,x_1,x_2,x_3\}$, the induced subgraph $C_n \vert _{I}$ of $C_n$ is given by
    \begin{center}
        \begin{tikzpicture}[scale=1.2]
  \node (1) at (0,0) {$x_1$};
  \node (2) at (1,0) {$1$};
  \node (3) at (2,0) {$x_2$};
  \node(4) at (3,0) {$x_3$};
  
  \draw (1) -- (2);
  \draw (2) -- (3);
  \draw (3) -- (4);
\end{tikzpicture}.
      \end{center}
Since $1 < x_2 < x_1$, by similarly way of Example~\ref{Bperm_example}, we find that there are exactly three alternating $\B(C_n)\vert_{I}$-permutations.
In contrast, the $a$-number $a(C_n\vert_{I})$ of $C_n\vert_{I}$ is given by
$$
(\#\text{edges of } C_n\vert_{I}) -1
$$
from \cite{Choi-Park2015}, and then $\beta_{1}((K_{\B(C_n)})_I) = 2$ which is different from the number of alternating $\B(C_n)\vert_{I}$-permutations.
Note that $C_n$ is not chordal which implies that $\B(C_n)$ is not a chordal building set for any vertex labeling of $C_n$.

\section{Open problems on the nested set complex}
In this section, we aim to propose several interesting problems based on our study of the nested set complex.

\subsection{Venustus building sets}
Let us briefly summarize all the steps for the main proof.
For a connected chordal building set~$\B$ on $[n+1] = \{1,\ldots,n+1\}$, we have that
\begin{align*}
\beta_k(X^\R_\B) & =\sum_{\substack{I \subset [n+1] \\ \left\vert I \right\vert \text{ is even}}} \widetilde{\beta}_{k-1} ((K_\B)_I)
    \quad (\text{by~\eqref{Betti_comp} as a variant of Theorem~\ref{Choi-Park2017_torsion}})\\
   & =\sum_{\substack{I \subset [n+1] \\ \left\vert I \right\vert \text{ is even}}} \widetilde{\beta}_{k-1} (K_{\B\vert_I}^{\odd})  
    \quad (\text{required the flagness of $K_\B$, and by Lemma~\ref{remove}})\\
   & =\sum_{\substack{I \subset [n+1] \\ \left\vert I \right\vert \text{ is even}}} \widetilde{\beta}_{\left\vert I \right\vert-k-2} (K_{\B\vert_I}^{\even}) 
    \quad (\text{by~\eqref{Alex_dual} as the Alexander duality})\\
   & =\sum_{\substack{I \subset [n+1] \\ \left\vert I \right\vert \text{ is even}}} \widetilde{\beta}_{\left\vert I \right\vert-k-2}(\Delta(\cP_{\B\vert_I}))
    \quad (\text{by Lemma~\ref{geo_real}})\\
   & =\sum_{I \in {[n+1] \choose 2k}} a(\B\vert_I)) 
    \quad (\text{required the chordality of $\B$, and by Theorem~\ref{thm:shellability_of_K_P_for_chordal}})\label{7.5}\\
   & = \sum_{I \in {[n+1] \choose 2k}}{\# \text{alternating $\B\vert_I$-permutations}} 
    \quad (\text{by Theorem~\ref{thm:a(G)}}).
\end{align*}
The one of the important steps in the above proof is Theorem~\ref{thm:shellability_of_K_P_for_chordal}, which concerns the shellability of~$\Delta(\cP_{\B\vert_I})$.
Consequently, the reduced homology of each $(K_\B)_I$ is concentrated in a single degree, with the degree depending only on the size of $I$.
More precisely, if $|I|=2k$, then only $\widetilde{\beta}_{k-1}((K_\B)_I)$ can be positive, and hence, each~$I$ contributes $\beta_k(X_\B^\R)$ of degree~$k$, which is half the cardinality of~$I$.
This phenomenon is advantageous as it simplifies the calculation of Betti numbers for real toric manifolds associated with any chordal building set and allows for a highly structured homology decomposition.
Since this phenomenon does not occur for all building sets, we would like to designate a specific term to those building sets that exhibit this phenomenon.

\begin{definition}
    A connected building set $\B$ on $[n+1]$ is said to be \emph{venustus} if, for each $I \in {[n+1] \choose 2k}$, the reduced homology of $(K_\B)_I$ is concentrated in the degree~$k-1$; that is,
    $$
    \widetilde{\beta}_{i-1}((K_\B)_I) = 0
    $$
    for all $i \neq k$.
\end{definition}

As mentioned in Section~\ref{graph_subsec}, the class of venustus building sets contains not only chordal building sets but also graphical ones, as illustrated in Figure~\ref{fig:venustus building sets}.
\begin{figure}
\centering
\begin{tikzpicture}
    \draw[rounded corners=2mm] (-6,-1.2) rectangle (6.2,3) node [below left] {Building sets};
    \draw[rounded corners=2mm] (1.2,0) rectangle  (-4,1.6) node [below right] {Chordal building sets};
    \draw[rounded corners=2mm] (-1.5,-0.3) rectangle (5.4,1) node [below left] {Graphical building sets};
    \draw[rounded corners=2mm] (5.8,-0.7)rectangle (-5.5,2.3) node [below right] {Venustus building sets};
\end{tikzpicture}
\caption{Venustus building sets}\label{fig:venustus building sets}
\end{figure}

\begin{problem}
  Characterize venustus building sets.
\end{problem}

It would be natural to extend this concept to the class of all real toric manifolds.
Let $X^\R$ be an $n$-dimensional real toric manifold.
Recall that $X^\R$ is completely determined by a pair~$(K,\lambda^\R)$, where $K$ is an $(n-1)$-dimensional simplicial complex and $\lambda^\R$ is a mod~$2$ characteristic map over~$K$.
Let $\omega_{i}$ be the element of $\row \lambda_{\B}^\R$ corresponding to the $i$th row of $\lambda_{\B}^\R$ as a matrix for $i=1, \ldots, n$.
Each nonzero element $\omega = \omega_{i_1} + \cdots + \omega_{i_k}$ of~$\row \lambda_{\B}^\R$ can be regarded as a subset~$\{i_1,\ldots,i_k\}$ of~$2^{[n]}$.
If there is a map $\sigma \colon 2^{[n]} \to [0,n]$ such that
$$
\widetilde{\beta}_{i-1}(K_\omega) = 0 \quad \text{for all $i \neq \sigma(\omega)$},
$$
then we say that the (rational) homology group $H^\ast(X^\R)$ of $X^\R$ is \emph{concentrated in~$\sigma$}.
In particular, a building set $\B$ on $[n+1]$ is venustus if and only if the homology of $X^\R_\B$ is concentrated in $\sigma_n$ defined by
$$
  \sigma_n(\omega) = \begin{cases}
                      \frac{\left\vert \omega \right\vert}{2}, & \mbox{if $\left\vert \omega \right\vert$ is even}, \\
                      \frac{\left\vert \omega \right\vert+1}{2}, & \mbox{if $\left\vert \omega \right\vert$ is odd},
                    \end{cases}
$$
for each $\omega \in 2^{[n]}$.
Motivated by this, one can define for a pair $(K,\lambda^\R)$ to be \emph{venustus} if the corresponding real toric manifold has the homology that is concentrated in~ $\sigma_n$. 

The characterization of venustus pairs seems quite challenging.
Most pairs $(K, \lambda^\R)$ are not venustus, and finding a venustus one is likely to be difficult.
For a pair to be venustus, it undoubtedly needs to have good combinatorial and topological structures for both $K$ and $\lambda^\R$. 

One nontrivial example of venustus pairs is the \emph{real Coxeter variety} $X_{B_n}^\R$ of type~$B_n$.
According to \cite{Choi-Park-Park2017typeB}, the corresponding complexes in the case of $X_{B_n}^\R$ are flag and shellable, which makes the variety venustus.
In contrast, for other types, none are venustus as shown in \cite{Choi-Kaji-Park2019} and \cite{Choi-Yoon-Yu2023}.

Just as a real permutohedral variety $X_{A_n}^\R$ is generalized to a real toric manifold derived from a building set, $X_{B_n}^\R$ can also be extended to ones associated with building sets, as studied in \cite{Devadoss2011}, \cite{PPP2020}, and \cite{Suyama2021}.
It would also be interesting to ask the characterization of \emph{type~$B$ venustus} building sets.
More generally, we propose the following problem.

\begin{problem}
    Characterize venustus pairs $(K, \lambda^\R)$; that is, characterize real toric manifolds whose homology is concentrated in $\sigma_n$.
\end{problem}

\subsection{Unimodality and multiplicative structures}

As shown in Section~\ref{section4}, for certain chordal building sets calculated by the authors so far, it has been observed that the sequence of the Betti numbers of $X^\R_\B$ is unimodal.
In particular, we can rigorously prove that the unimodality holds for real permutohedral varieties. 
Denote by $a_{2k}$ the $2k$th Euler zigzag number (A000111 in \cite{oeis}).

\begin{lemma}[Proposition~3.2 in \cite{Sokal2020}] \label{prop:a/nfac}
  The sequence~$\left\{ \frac{a_{2k}}{(2k)!} \right\}_{k \geq 0}$ is strictly log-concave.
\end{lemma}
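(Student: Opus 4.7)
The plan is to express $c_k := a_{2k}/(2k)!$ (the Taylor coefficient of $x^{2k}$ in $\sec x$) in terms of the Dirichlet beta function $\beta(s) := \sum_{j \geq 0}(-1)^j(2j+1)^{-s}$, and then to deduce the discrete log-concavity $c_k^2 > c_{k-1}c_{k+1}$ from the smooth concavity of $\log \beta$ on a suitable interval. First I would apply the Mittag--Leffler expansion
$$\sec(x) = \sum_{j \geq 0}\frac{4\pi (-1)^j (2j+1)}{(2j+1)^2\pi^2 - 4x^2},$$
which follows from summing the residues of $\sec$ at its simple poles $\pm (2j+1)\pi/2$; expanding each summand as a geometric series in $4x^2/((2j+1)\pi)^2$ yields
$$c_k = \frac{2^{2k+2}}{\pi^{2k+1}}\,\beta(2k+1).$$
Since $\log(2^{2k+2}/\pi^{2k+1})$ is linear in $k$, the claim $c_k^2 > c_{k-1}c_{k+1}$ reduces to $\beta(2k+1)^2 > \beta(2k-1)\beta(2k+3)$ for $k \geq 1$.

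Next, I would show that $\phi(s) := \log \beta(s)$ is strictly concave on $[3,\infty)$. Term-by-term differentiation of the defining series gives
$$\beta'(s) = \sum_{j \geq 1}(-1)^{j+1}\frac{\log(2j+1)}{(2j+1)^s}, \qquad \beta''(s) = \sum_{j \geq 1}(-1)^{j}\frac{(\log(2j+1))^2}{(2j+1)^s},$$
with the $j = 0$ contribution vanishing. For $s \geq 3$, the elementary bound $s \log x \geq 3 \log 3 > 2$ shows that the derivatives of $(\log x)/x^s$ and $(\log x)^2/x^s$ are strictly negative on $[3,\infty)$, so the absolute values of the terms of both series are strictly decreasing in $j \geq 1$. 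The alternating series test then pins the sign of each sum to that of its leading term: $\beta'(s) > 0$ and $\beta''(s) < 0$. Together with $\beta(s) > 0$ (clear from the first two terms of its defining series), this gives $\phi''(s) = \beta''(s)/\beta(s) - (\beta'(s)/\beta(s))^2 < 0$ on $[3,\infty)$.

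For $k \geq 2$ the interval $[2k-1, 2k+3]$ is contained in $[3,\infty)$, so the second-order mean-value theorem produces $\xi \in (2k-1, 2k+3)$ with $\phi(2k+3) - 2\phi(2k+1) + \phi(2k-1) = 4 \phi''(\xi) < 0$, which is exactly the required inequality. The base case $k = 1$ I would handle separately via the closed-form values $\beta(1) = \pi/4$, $\beta(3) = \pi^3/32$, $\beta(5) = 5\pi^5/1536$, giving $\beta(3)^2/(\beta(1)\beta(5)) = 6/5 > 1$.

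The main obstacle is justifying $\beta''(s) < 0$ via the alternating-series test, since this requires monotonicity of $(\log(2j+1))^2/(2j+1)^s$ in $j\geq 1$, and that monotonicity fails for small $s$ (e.g., for $s = 1$ the sequence first increases). This forces the $k = 1$ case to be handled by the explicit closed-form values of $\beta$ at odd integers rather than by the general concavity argument, so the proof is only uniformly clean for $k \geq 2$.
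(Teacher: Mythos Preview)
The paper does not give its own proof of this lemma; it is quoted as Proposition~3.2 of \cite{Sokal2020} and immediately applied as a black box in the next theorem. So there is nothing in this paper to compare your argument against.

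That said, your proof is correct and self-contained. The identity $a_{2k}/(2k)! = 2^{2k+2}\pi^{-(2k+1)}\beta(2k+1)$ follows from the standard partial-fraction expansion of $\sec$, and your alternating-series bound legitimately shows $\beta'' < 0$ on $[3,\infty)$: the monotonicity of $(\log x)^2/x^s$ on $x \ge 3$ for $s \ge 3$ is exactly the inequality $s\log x > 2$, which holds since $3\log 3 > 2$. Combined with $\beta > 0$ this gives $(\log\beta)'' < 0$ on $[3,\infty)$. The second-difference form of strict concavity then yields $\beta(2k+1)^2 > \beta(2k-1)\beta(2k+3)$ for every $k \ge 2$, and the closed-form values $\beta(1)=\pi/4$, $\beta(3)=\pi^3/32$, $\beta(5)=5\pi^5/1536$ dispatch $k=1$ by the direct computation $\beta(3)^2/(\beta(1)\beta(5)) = 6/5$. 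The obstacle you flag---that the term-monotonicity needed for the alternating-series estimate fails for small $s$, forcing a separate base case---is genuine, and handling $k=1$ via the explicit special values is the right workaround.
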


\begin{theorem}
    For a fixed positive integer $n$, the sequence $\left\{ \binom{n+1}{2k} a_{2k} \right\}_{k \geq 0}$ is log-concave.
\end{theorem}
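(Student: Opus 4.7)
The plan is to verify the log-concavity inequality $b_k^2 \geq b_{k-1}\, b_{k+1}$ with $b_k := \binom{n+1}{2k}\, a_{2k}$, by factoring the ratio $b_k^2/(b_{k-1}b_{k+1})$ into a ``binomial part'' and a ``zigzag part,'' each of which I can bound below by $1$. Boundary behavior is handled first: if $2(k+1)>n+1$, then $b_{k+1}=0$ and the inequality is automatic, so it suffices to treat the range $n-2k\geq 1$.

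The key algebraic step is to rewrite
\[
b_k \;=\; \frac{(n+1)!}{(n+1-2k)!}\cdot \frac{a_{2k}}{(2k)!},
\]
after which the ratio $b_k^2/(b_{k-1}b_{k+1})$ splits as a product
\[
\frac{(n+3-2k)!\,(n-1-2k)!}{\bigl((n+1-2k)!\bigr)^2}\;\cdot\; \frac{\bigl(a_{2k}/(2k)!\bigr)^2}{\bigl(a_{2k-2}/(2k-2)!\bigr)\bigl(a_{2k+2}/(2k+2)!\bigr)}.
\]
The second factor is strictly greater than $1$ by Lemma~\ref{prop:a/nfac}. Setting $m := n-2k \geq 1$, the first factor simplifies to $(m+2)(m+3)/\bigl(m(m+1)\bigr)$, which is strictly greater than $1$ since $(m+2)(m+3) - m(m+1) = 4m+6 > 0$. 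Multiplying these two bounds gives the desired inequality (in fact in strict form whenever the right-hand side is nonzero).

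The conceptual work is entirely absorbed into Lemma~\ref{prop:a/nfac}; everything else is routine factorial manipulation, so I do not anticipate any substantive obstacle. The only point requiring care is making sure that the decoupling above is performed in a range where all factorials and all $b_k$'s are well-defined and positive, which is precisely the range $n-2k\geq 1$ identified at the start.
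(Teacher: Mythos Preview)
Your proposal is correct and follows essentially the same approach as the paper: both arguments write $b_k = \dfrac{(n+1)!}{(n+1-2k)!}\cdot\dfrac{a_{2k}}{(2k)!}$, split the ratio $b_k^2/(b_{k-1}b_{k+1})$ into a binomial factor $\dfrac{(n+3-2k)(n+2-2k)}{(n+1-2k)(n-2k)}>1$ and the zigzag factor handled by Lemma~\ref{prop:a/nfac}. Your treatment is slightly more careful in isolating the boundary case $b_{k+1}=0$ explicitly, but otherwise the two proofs are the same.
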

\begin{proof}
    Set $\alpha_k = \frac{a_{2k}}{(2k)!}$ and $\beta_k = \binom{n+1}{2k} a_{2k}$.
    Then we have that
    \begin{align*}
        \frac{\beta_{k}^2}{\beta_{k-1}\beta_{k+1}} &= \frac{(n+1-2k+2)(n+1-2k+1) \alpha_k^2}{ (n+1-2k)(n+1-2k-1) \alpha_{k-1}\alpha_{k+1}} \\
        &> \frac{\alpha_k^2}{\alpha_{k-1}\alpha_{k+1}} \\
        &> 1 \quad \text{(by Lemma~\ref{prop:a/nfac}).}
    \end{align*}
\end{proof}
Even if we consider graphical building sets, we have failed to find any example whose Betti number sequence is not unimodal. 
Based on these findings, we propose the following conjecture.
\begin{conjecture}
  For a venustus building set $\B$, the sequence $\{\beta_k(X^\R_{\B})\}_{k \geq 0}$ representing the rational Betti numbers of the associated real toric manifold is unimodal.
\end{conjecture}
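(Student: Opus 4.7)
The plan is to attack the conjecture progressively, beginning with tractable subfamilies and working toward the general venustus case. The first step is to settle the Hochschild family directly: by Corollary~\ref{Hoch_corr},
$$
\beta_k(X^\R_{\Hoch(m,n)}) = \sum_{s+r = 2k} \binom{m}{s}\, \lvert \Alt_{\Hoch(s,r)} \rvert,
$$
which presents $\beta_k$ as a convolution. I would first establish log-concavity of each factor sequence, the binomial sequence $\binom{m}{s}$ being classical and $\lvert \Alt_{\Hoch(s,r)} \rvert$ being accessible, in the spirit of Lemma~\ref{prop:a/nfac}, via an explicit generating-function identity or a sign-reversing involution on pairs of alternating permutations; since the convolution of positive log-concave sequences is log-concave, the conjecture for Hochschild varieties would follow. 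The permutohedral case has been handled above, and the associahedron and stellohedron cases should admit analogous convolution arguments thanks to the block decompositions in Corollary~\ref{graph_corr}.

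The second step tackles graph associahedra of chordal graphs. Writing
$$
\mathcal{A}_k = \bigsqcup_{I \in \binom{[n+1]}{2k}} \{\text{alternating } \B(G)\vert_I\text{-permutations}\},
$$
the goal is to construct an injection
$$
\Phi \colon \mathcal{A}_{k-1} \times \mathcal{A}_{k+1} \hookrightarrow \mathcal{A}_k \times \mathcal{A}_k,
$$
modeled on the Lindstr\"{o}m--Gessel--Viennot swap principle: given $(\sigma, \tau)$ supported on subsets $I,J$ respectively, one canonically selects a block of $\tau$ that can be transplanted onto $\sigma$ while preserving the alternating $\B$-permutation property on both sides, with injectivity secured by a canonical inverse rule recovering the transferred block. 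If successful, this injection yields log-concavity and hence unimodality.

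The third step treats general venustus building sets. My plan here is induction on $|S|$, relying on a subsidiary fact (itself to be verified) that every restriction of a venustus building set remains venustus, together with Theorem~\ref{Choi-Park2017_torsion} and the concentration property, in order to reduce $\{\beta_k(X^\R_\B)\}$ to a weighted aggregate of smaller Betti sequences to which the inductive hypothesis applies. A more ambitious alternative would be to prove real-rootedness of the polynomial $\sum_k \beta_k(X^\R_\B)\, t^k$ by exhibiting it as the image of generating polynomials of smaller building sets under stability-preserving operators in the Borcea--Br\"{a}nd\'{e}n sense.

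The principal obstacle is the absence of a Hard Lefschetz principle for $X^\R_\B$: unlike its complex counterpart $X_\B$, the real locus is not K\"ahler and carries no natural ample class whose cup-product yields a Lefschetz-type isomorphism that would force unimodality algebraically. Consequently the argument must be combinatorial, and I expect the sharpest difficulty to concentrate in constructing $\Phi$, since the admissibility of a transferred block depends on the global connected-component structure of $\B\vert_I$ and $\B\vert_J$ rather than any purely local rule, and no obviously canonical choice of block presents itself uniformly across all chordal, let alone all venustus, building sets.
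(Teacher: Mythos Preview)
The statement you are addressing is a \emph{conjecture} in the paper, not a theorem: the authors explicitly write ``Based on these findings, we propose the following conjecture,'' and offer no proof. There is therefore no paper argument to compare your proposal against; the paper proves only the special case of real permutohedral varieties (the theorem immediately preceding the conjecture) and leaves everything else open.

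Your proposal is accordingly a research plan rather than a proof, and you are candid about this. A few concrete issues deserve mention. First, the Hochschild formula $\beta_k = \sum_{s+r=2k} \binom{m}{s}\,\lvert \Alt_{\Hoch(s,r)} \rvert$ is not a convolution in the standard sense, because $\lvert \Alt_{\Hoch(s,r)} \rvert$ depends jointly on $s$ and $r$ (for instance $\lvert\Alt_{\Hoch(2,2)}\rvert=3$, $\lvert\Alt_{\Hoch(1,3)}\rvert=1$, $\lvert\Alt_{\Hoch(0,4)}\rvert=0$ from Table~\ref{betti_hoch}); the convolution-of-log-concave-sequences heuristic therefore does not apply as stated. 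Second, the injection $\Phi$ in step two is, as you acknowledge, the crux of the matter, and no candidate construction is offered; this step is aspirational. Third, step three requires that restrictions of venustus building sets remain venustus, but the paper poses the characterization of venustus building sets as an open problem and asserts no such closure property. In short, your outline is a reasonable catalogue of possible lines of attack on a problem the paper leaves open, but none of its steps is close to an actual proof.
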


On the other hand, for a chordal building set $\B$, one remarkable advantage of our result is that the generator of the cohomology ring of $X^\R_\B$ can also be expressed by alternating $\B$-permutations.
This allows for a nice combinatorial description of multiplicative structures in terms of permutations, similar to a real permutohedral variety \cite{Choi-Yoon2023}.
However, there is a key difference. 
In the case of real permutohedral varieties, $K_{\B\vert_I}^{\odd}$ is a pure simplicial complex, whereas $K_{\B\vert_I}^{\odd}$ is not pure in general.
This difference makes it difficult to directly apply the methods from~ \cite{Choi-Yoon2023} to the case of a chordal building set~$\B$.
Therefore, we leave the following problem.

\begin{problem}
    For a connected chordal building set~$\B$, give an explicit description of a multiplicative structure of rational cohomology ring~$H^\ast(X^\R_\B)$ in terms of alternating $\B$-permutations.
\end{problem}

\end{document}